\newcommand{\harxiv}[1]{\href{http://arxiv.org/abs/#1}{\texttt{arXiv:#1}}}
\newcommand{\hyref}[2]{\hyperref[#2]{#1~\ref*{#2}}}
\newcommand{\bib}[6]{{\bibitem{#2} #3: {\emph{#4},} #5#6.}}
\newcommand{\Exc}[1]{\mathcal{E}xc(#1)}    
\newcommand{\mExc}[1]{m\mathcal{E}xc(#1)}  
\newcommand{\ASS}{{\rm(\dag)}}
\newcommand{\xra}{\xrightarrow{~\alpha~}}
\newcommand{\xlb}{\xleftarrow{~\beta~}}
\newcommand{\lfdict}{\\[0.9ex]} 
 \newcommand{\IN}{{\mathbb{N}}} 
\newcommand{\IP}{{\mathbb{P}}}
 \newcommand{\IZ}{{\mathbb{Z}}}
\newcommand{\cA}{{\mathcal A}}  \newcommand{\cC}{{\mathcal C}}
\newcommand{\cD}{{\mathcal D}} \newcommand{\cE}{{\mathcal E}} 
 \newcommand{\cH}{{\mathcal H}} 
  \newcommand{\cO}{{\mathcal O}}
\newcommand{\cS}{{\mathcal S}}
\newcommand{\pd}{\mathrm{pd}}                    
\newcommand{\kk}{{\mathbf{k}}}
\newcommand{\op}{^{\mathrm{{op}}}}
\newcommand{\orth}{^\perp}
\newcommand{\udim}{\underline{\mathrm{dim}}}
\newcommand{\coloneqq}{\mathrel{\mathop:}=}
\DeclareMathOperator{\cone}{\mathrm{cone}}
\DeclareMathOperator{\Hom}{\mathrm{Hom}}
\DeclareMathOperator{\RHom}{\mathrm{RHom}}
\DeclareMathOperator{\Aut}{\mathrm{Aut}}
\DeclareMathOperator{\Ext}{\mathrm{Ext}}
\DeclareMathOperator{\ext}{\mathrm{ext}}
\DeclareMathOperator{\id}{{\mathrm{id}}}
\DeclareMathOperator{\rk}{{\mathrm{rk}}}
\DeclareMathOperator{\End}{{\mathrm{End}}}
\newcommand{\TTT}{\mathsf{T}\!}   
\newcommand{\SSS}{\mathsf{S}}
\newcommand{\Coh}{\mathrm{Coh}}
\newcommand{\lmod}[1]{#1\mathrm{\text{-}mod} }
\newcommand{\DD}{\mathcal{D}}
\newcommand{\Db}{\DD^b}
\newcommand{\RR}{\mathsf{R}}
\newcommand{\extension}[1]{\left( \begin{array}{@{}c@{}} #1 \end{array} \right)}
\newcommand{\smextension}[2]{{\textstyle\binom{#1}{#2}}}
\newcommand{\symm}[1]{\mathsf{Sym}(#1)}
\newcommand{\braid}[1]{\mathsf{Br}(#1)}
\newcommand{\onto}{\to\!\!\!\!\!\to}
\newcommand{\into}{\hookrightarrow}
\newcommand{\too}{\longrightarrow}
\newcommand{\isom}{ \xrightarrow{ \,\smash{\raisebox{-0.65ex}{\ensuremath{\scriptstyle\sim}}}\,}}
\newcommand{\blank}{-}
\newcommand{\xxto}[1]{\xrightarrow{\raisebox{-0.2ex}{\ensuremath{{\scriptscriptstyle #1}}}}} 
\newcommand{\xxfrom}[1]{\xleftarrow{\raisebox{-0.2ex}{\ensuremath{{\scriptscriptstyle #1}}}}}
\newtheorem{theorem}{Theorem}[section]
\newtheorem*{theorem*}{Theorem}
\newtheorem*{definition*}{Definition}
  \newaliascnt{proposition}{theorem}
  \newtheorem{proposition}[proposition]{Proposition}
  \newaliascnt{lemma}{theorem}
  \newtheorem{lemma}[lemma]{Lemma}
  \newaliascnt{corollary}{theorem}
  \newtheorem{corollary}[corollary]{Corollary}
\theoremstyle{definition}
  \newaliascnt{definition}{theorem}
  \newtheorem{definition}[definition]{Definition}
  \newaliascnt{remark}{theorem}
  \newtheorem{remark}[remark]{Remark}
  \newaliascnt{question}{theorem}
  \newaliascnt{example}{theorem}
  \newtheorem{example}[example]{Example}
\newtheorem*{conjecture*}{Conjecture}
\newcommand{\captionnode}[2]{
  \hspace{-0.8em}%
  \smash{\tikz[baseline=-0.5ex]{\draw[#1] (0,0) node {#2}}}%
  \hspace{-0.4em} }
\newcommand{\worm}[1]{
  \begin{tikzpicture}[scale=0.4,thick,every node/.style={thick,fill,circle,inner sep=1pt}]
  #1
  \end{tikzpicture}
}
\newcommand{\wormgridtwo}[1]{
  \begin{tikzpicture}[scale=0.4]
    \draw[gray!50] (0,1) -- (0,0) -- (1,0);
    \begin{scope}[thick,every node/.style={thick,fill,circle,inner sep=1pt}]
     #1
    \end{scope}
\end{tikzpicture}
}
\newcommand{\wormgrid}[1]{
  \begin{tikzpicture}[scale=0.4]
    \draw[gray!50] (0,2) -- (0,0) -- (2,0);
    \draw[gray!50] (0,1) -- (1,1) -- (1,0);
    \begin{scope}[thick,every node/.style={thick,fill,circle,inner sep=1pt}]
     #1
    \end{scope}
\end{tikzpicture}
}
\newcommand{\wormgridc}[2]{
  \begin{tikzpicture}[scale=0.4]
    \draw[gray!50] (0,2) -- (0,0) -- (2,0);
    \draw[gray!50] (0,1) -- (1,1) -- (1,0);
    \begin{scope}[thick,every node/.style={thick,fill,circle,inner sep=1pt}]
     #1
    \end{scope}
    #2
\end{tikzpicture}
}
\newcommand{\wormcap}[1]{
     \node at (1,-1.5) {$\binom{123}{#1}$};
}
\newcommand{\ltranspose}[1]{\raisebox{8.0ex}{$\xleftrightarrow{(#1)\cdot}$}}
\newcommand{\thorm}[4]{
  \def\w{#4} 
  \begin{scope}[shift={(#1,1.2*#2)},scale=0.4]
    \node(#3) at (1,1) {};
    \draw[gray!50] (0,2) -- (0,0) -- (2,0);
    \draw[gray!50] (0,1) -- (1,1) -- (1,0);
    \begin{scope}[thick,every node/.style={thick,fill,circle,inner sep=1pt}]
      \draw (\w[0],\w[1]) node {} -- (\w[2],\w[3]) node {} -- (\w[4],\w[5]) node {};
      \draw (\w[6],\w[7]) node {} -- (\w[8],\w[9]) node {};
      \draw (\w[10],\w[11]) node {};
    \end{scope}
%
    \node at (1,-1) {$\binom{123}{#3}$};  
  \end{scope}
}
  \tikzset{T1/.style={ultra thick,red}}
  \tikzset{T2/.style={ultra thick,blue}}
  \tikzset{T3/.style={ultra thick,green}}
  \tikzset{R1/.style={ultra thick,orange}}
  \tikzset{R2/.style={ultra thick,magenta}}
  \tikzset{R3/.style={ultra thick,teal}}
  \newcommand*{\clp}{0.4}  
\newcommand{\form}[4]{
  \def\w{#4} 
  \begin{scope}[shift={(1.2*#1,1.1*#2)},scale=0.4]
    \node(#3) at (1,1) {};
%
    \fill[gray!0,rounded corners]
       (-\clp,-\clp) -- (3+\clp,-\clp) -- (3+\clp,\clp) -- (\clp,3+\clp) -- (-\clp,3+\clp) -- cycle;
    \draw[gray!50] (0,3) -- (0,0) -- (3,0);
    \draw[gray!50] (0,2) -- (1,2) -- (1,0);
    \draw[gray!50] (0,1) -- (2,1) -- (2,0);
    \begin{scope}[thick,every node/.style={thick,fill,circle,inner sep=1pt}]
      \draw (\w[0],\w[1]) node {} -- (\w[2],\w[3]) node {} -- (\w[4],\w[5]) node {} -- (\w[6],\w[7]) node {};
      \draw (\w[8],\w[9]) node {} -- (\w[10],\w[11]) node {} -- (\w[12],\w[13]) node {};
      \draw (\w[14],\w[15]) node {} -- (\w[16],\w[17]) node {};
      \draw (\w[18],\w[19]) node {};
    \end{scope}
  \end{scope}
}
\begin{document}
\title[Exceptional sequences]
      {Exceptional Sequences and Spherical Modules \newline
       for the Auslander Algebra of $\bm{\kk[x]/(x^t)}$}

\author{Lutz Hille und David Ploog}

\begin{abstract}
We classify spherical modules and full exceptional sequences of modules over the Auslander algebra of $\kk[x]/(x^t)$. We categorify the left and right symmetric group actions on these exceptional sequences to two braid group actions: of spherical twists along simple modules, and of right mutations. In particular, every such exceptional sequence is obtained by spherical twists from a standard sequence, and likewise for right mutations.
\end{abstract}

\begingroup\renewcommand\thefootnote{}%
\footnote{MSC 2010: 16D90, 16G20, 16S38, 18E30}
%
\footnote{Keywords: Auslander algebra, full exceptional sequence, exceptional module, spherical module}
\footnote{Contact: \texttt{lhill\_01@uni-muenster.de},
                   \texttt{dploog@math.fu-berlin.de}}
\addtocounter{footnote}{-2}\endgroup

\maketitle

\setcounter{tocdepth}{1}
{\centering\parbox{0.7\textwidth}{\tableofcontents}\par}
\addtocontents{toc}{\protect\setcounter{tocdepth}{-1}}     

\section*{Introduction}\label{Sintro}
\addtocontents{toc}{\protect\setcounter{tocdepth}{1}}      

\noindent
In this text, we study the Auslander algebras $\cA_t$ of $\kk[x]/(x^t)$. This family of finite-dimensional algebras is well-known in representation theory. It also occurs for certain matrix problems, i.e.\ actions of linear groups on flags \cite[\S4]{Hille-Roehrle}. In previous work \cite{Hille-Ploog1}, we link $\cA_t$ to $(t-1)$-chains of $(-2)$-curves on projective surfaces.

We classify exceptional and spherical modules over $\cA_t$ in \hyref{Theorem}{thm:module-classification}, and full exceptional sequences of modules in \hyref{Theorem}{thm:exceptional-bijections}. Below, we just state the enumerative consequences of these classification results:

\begin{theorem*} The numbers of the following types of objects over the algebra $\cA_t$ are:

  \begin{tabular}{@{\hspace{1em}}ll}
    $2^t-1$   & exceptional modules; \\
    $2^t-1-t$ & $2$-spherical modules; \\
    $t!$      & full exceptional sequences of modules.
  \end{tabular}
\end{theorem*}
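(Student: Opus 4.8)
My plan is to derive all three numbers as immediate consequences of the two classification results announced above, \hyref{Theorem}{thm:module-classification} and \hyref{Theorem}{thm:exceptional-bijections}; once those are available, nothing remains but counting.

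For the first two rows I would use \hyref{Theorem}{thm:module-classification}. It sets up an explicit bijection between the exceptional $\cA_t$-modules and the nonempty subsets of $\{1,\dots,t\}$ --- drawn there as ``worms'' --- and it identifies the $2$-spherical modules with the worms whose underlying subset has size at least two. Counting subsets then gives $\sum_{k=1}^{t}\binom{t}{k}=2^{t}-1$ exceptional modules and $\sum_{k=2}^{t}\binom{t}{k}=2^{t}-1-t$ that are $2$-spherical. For the third row I would quote \hyref{Theorem}{thm:exceptional-bijections}, which equips the set of full exceptional sequences of $\cA_t$-modules with commuting left and right actions of the symmetric group $\Si{t}$ --- by spherical twists along the simple modules on one side and by right mutations on the other --- each of them simply transitive with the standard sequence as base point. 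Either action then yields a bijection with $\Si{t}$, whence this set has $|\Si{t}|=t!$ elements.

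All the substance is therefore in the two classification theorems, and the step I expect to be the genuine obstacle is the one on exceptional sequences. For \hyref{Theorem}{thm:module-classification} the work is to check that the conditions $\End(E)=\kk$, $\Ext^{1}(E,E)=0$ together with $\Ext^{2}(E,E)=0$ (exceptional) or $\Ext^{2}(E,E)\cong\kk$ ($2$-spherical) carve out exactly the worm modules: one inclusion is a finite computation on the explicitly described modules, run through the Auslander--Reiten theory of $\kk[x]/(x^{t})$ and the quasi-hereditary structure of $\cA_t$, with the $\Ext^{2}$ bookkeeping splitting the list into its exceptional and spherical parts, and the other inclusion is an induction on dimension vectors. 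For \hyref{Theorem}{thm:exceptional-bijections} one must establish both \emph{transitivity} --- every full exceptional sequence of modules is obtained from the standard one by a word in spherical twists, and equally by right mutations --- and \emph{freeness} --- no nontrivial permutation stabilises a sequence. Pinning these down forces the worm combinatorics to be matched precisely against the homological and Grothendieck-group constraints cutting out exceptional sequences, and that is where I expect the argument to be most delicate.
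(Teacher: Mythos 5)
Your proposal is correct and follows essentially the same route as the paper: both rows of module counts are read off from \hyref{Theorem}{thm:module-classification} by enumerating thin indecomposables of rank $1$ resp.\ rank $0$ (the paper, in \hyref{Corollary}{cor:count}, sums $\sum_{i=1}^{t}2^{i-1}=2^t-1$ over worm lengths and obtains the spherical count as bricks minus exceptionals, which agrees with your binomial sums), and the $t!$ comes from the bijection with $\symm{t}$ in \hyref{Theorem}{thm:exceptional-bijections}. One small correction: that bijection is the combinatorial one $\cE\mapsto\sigma(\cE)$ via start permutations (\hyref{Remark}{rem:action}); the spherical twists and right mutations are \emph{braid} group actions on $\Exc{t}$ which do not even restrict to $\mExc{t}$ and only lift the two $\symm{t}$-actions, so they should not be invoked as the simply transitive $\symm{t}$-actions giving the count --- though this does not affect the enumeration.
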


Moreover, we describe full exceptional sequences of modules combinatorially by \emph{worm diagrams} in \hyref{Subsection}{cor:worm-diagrams}. In \hyref{Theorem}{thm:exceptional-bijections}, we establish a natural bijection of these diagrams with the symmetric group. Hence this group acts, from both sides, on worm diagrams, i.e.\ on full exceptional sequences of $\cA_t$-modules.

We categorify both symmetric group actions to braid group actions. For the left action, this is done by the twist functors along the spherical simple modules; see \hyref{Subsection}{sub:braids-twisting}. For the right action, we use right mutations of exceptional sequences; see \hyref{Subsection}{sub:braids-mutation}. On the following two pages, we show these braid group actions for $\cA_4$.

\newpage

\centerline{\textbf{Right mutation graph of size 4 worm diagrams}}

\vfill

\hspace*{-1.25cm}
\noindent
\begin{tikzpicture}[scale=1.05]
  \form{ 0}{   8}{4321}{{0,0,0,1,0,2,0,3, 1,0,1,1,1,2, 2,0,2,1, 3,0}}
  \form{-2}{   6}{3421}{{0,0,1,0,1,1,1,2, 0,1,0,2,0,3, 2,0,2,1, 3,0}}
  \form{ 0}{   6}{4231}{{0,0,0,1,0,2,0,3, 1,0,2,0,2,1, 1,1,1,2, 3,0}}
  \form{ 2}{   6}{4312}{{0,0,0,1,0,2,0,3, 1,0,1,1,1,2, 2,0,3,0, 2,1}}
  \form{-4}{   3}{3241}{{0,0,1,0,2,0,2,1, 0,1,0,2,0,3, 1,1,1,2, 3,0}}
  \form{ 0}{   3}{3412}{{0,0,1,0,1,1,1,2, 0,1,0,2,0,3, 2,0,3,0, 2,1}}
  \form{-2}{   3}{2431}{{0,0,0,1,1,1,1,2, 1,0,2,0,2,1, 0,2,0,3, 3,0}}
  \form{ 2}{   3}{4213}{{0,0,0,1,0,2,0,3, 1,0,2,0,3,0, 1,1,1,2, 2,1}}
  \form{ 4}{   3}{4132}{{0,0,0,1,0,2,0,3, 1,0,1,1,2,1, 2,0,3,0, 1,2}}
  \form{-3}{   0}{3214}{{0,0,1,0,2,0,3,0, 0,1,0,2,0,3, 1,1,1,2, 2,1}}
  \form{ 5}{   0}{4123}{{0,0,0,1,0,2,0,3, 1,0,2,0,3,0, 1,1,2,1, 1,2}}
  \form{ 0}{-1.1}{2413}{{0,0,0,1,1,1,1,2, 1,0,2,0,3,0, 0,2,0,3, 2,1}}
  \form{ 0}{ 1.1}{3142}{{0,0,1,0,1,1,2,1, 0,1,0,2,0,3, 2,0,3,0, 1,2}}
  \form{-5}{   0}{2341}{{0,0,1,0,2,0,2,1, 0,1,1,1,1,2, 0,2,0,3, 3,0}}
  \form{ 3}{   0}{1432}{{0,0,0,1,0,2,1,2, 1,0,1,1,2,1, 2,0,3,0, 0,3}}
  \form{-4}{  -3}{2314}{{0,0,1,0,2,0,3,0, 0,1,1,1,1,2, 0,2,0,3, 2,1}}
  \form{-2}{  -3}{3124}{{0,0,1,0,2,0,3,0, 0,1,0,2,0,3, 1,1,2,1, 1,2}}
  \form{ 0}{  -3}{2143}{{0,0,0,1,1,1,2,1, 1,0,2,0,3,0, 0,2,0,3, 1,2}}
  \form{ 2}{  -3}{1342}{{0,0,1,0,1,1,2,1, 0,1,0,2,1,2, 2,0,3,0, 0,3}}
  \form{ 4}{  -3}{1423}{{0,0,0,1,0,2,1,2, 1,0,2,0,3,0, 1,1,2,1, 0,3}}
  \form{-2}{  -6}{2134}{{0,0,1,0,2,0,3,0, 0,1,1,1,2,1, 0,2,0,3, 1,2}}
  \form{ 0}{  -6}{1324}{{0,0,1,0,2,0,3,0, 0,1,0,2,1,2, 1,1,2,1, 0,3}}
  \form{ 2}{  -6}{1243}{{0,0,0,1,1,1,2,1, 1,0,2,0,3,0, 0,2,1,2, 0,3}}
  \form{ 0}{  -8}{1234}{{0,0,1,0,2,0,3,0, 0,1,1,1,2,1, 0,2,1,2, 0,3}}

  \begin{scope}[on background layer]
  \draw[R1] (4321) -- (3421); \draw[R2] (4321) -- (4231); \draw[R3] (4321) -- (4312);
  \draw[R2] (3421) -- (3241); \draw[R3] (3421) -- (3412);
  \draw[R1] (4231) -- (2431); \draw[R3] (4231) -- (4213);
  \draw[R1] (4312) -- (3412); \draw[R2] (4312) -- (4132);
  \draw[R1] (4213) -- (2413); \draw[R2] (4213) -- (4123);
  \draw[R1] (3241) -- (2341); \draw[R3] (3241) -- (3214);
  \draw[R2] (3412) -- (3142);
  \draw[R1] (4132) -- (1432); \draw[R3] (4132) -- (4123);
  \draw[R2] (2431) -- (2341); \draw[R3] (2431) -- (2413);
  \draw[R2] (2413) -- (2143);
  \draw[R1] (3214) -- (2314); \draw[R2] (3214) -- (3124);
  \draw[R1] (4123) -- (1423);
  \draw[R2] (1432) -- (1342); \draw[R3] (1432) -- (1423);
  \draw[R3] (2341) -- (2314);
  \draw[R1] (3142) -- (1342); \draw[R3] (3142) -- (3124);
  \draw[R1] (3124) -- (1324);
  \draw[R2] (1423) -- (1243);
  \draw[R1] (2143) -- (1243); \draw[R3] (2143) -- (2134);
  \draw[R2] (2314) -- (2134);
  \draw[R3] (1342) -- (1324);
  \draw[R1] (2134) -- (1234);
  \draw[R2] (1324) -- (1234);
  \draw[R3] (1243) -- (1234);
  \end{scope}
\end{tikzpicture}

\vfill

\noindent
The $24$ worm diagrams of size 4, i.e.\ all full exceptional sequences of $\cA_4$-modules.
Downwards edges are right mutations:
  \captionnode{R1}{$\RR_1$}, \captionnode{R2}{$\RR_2$}, \captionnode{R3}{$\RR_3$}.
See \hyref{Subsection}{sub:braids-mutation}.

\newpage

\centerline{\textbf{Spherical twist graph of size 4 worm diagrams}}

\vfill

\hspace*{-1.25cm}
\noindent
\begin{tikzpicture}[scale=1.05]
  \form{ 0}{   8}{4321}{{0,0,0,1,0,2,0,3, 1,0,1,1,1,2, 2,0,2,1, 3,0}}
  \form{ 2}{   6}{4312}{{0,0,0,1,0,2,0,3, 1,0,1,1,1,2, 2,0,3,0, 2,1}}
  \form{ 0}{   6}{4231}{{0,0,0,1,0,2,0,3, 1,0,2,0,2,1, 1,1,1,2, 3,0}}
  \form{-2}{   6}{3421}{{0,0,1,0,1,1,1,2, 0,1,0,2,0,3, 2,0,2,1, 3,0}}
  \form{ 4}{   3}{4213}{{0,0,0,1,0,2,0,3, 1,0,2,0,3,0, 1,1,1,2, 2,1}}
  \form{ 2}{   3}{4132}{{0,0,0,1,0,2,0,3, 1,0,1,1,2,1, 2,0,3,0, 1,2}}
  \form{ 0}{   3}{3412}{{0,0,1,0,1,1,1,2, 0,1,0,2,0,3, 2,0,3,0, 2,1}}
  \form{-2}{   3}{3241}{{0,0,1,0,2,0,2,1, 0,1,0,2,0,3, 1,1,1,2, 3,0}}
  \form{-4}{   3}{2431}{{0,0,0,1,1,1,1,2, 1,0,2,0,2,1, 0,2,0,3, 3,0}}
  \form{ 3}{   0}{3214}{{0,0,1,0,2,0,3,0, 0,1,0,2,0,3, 1,1,1,2, 2,1}}
  \form{ 5}{   0}{4123}{{0,0,0,1,0,2,0,3, 1,0,2,0,3,0, 1,1,2,1, 1,2}}
  \form{ 0}{ 1.1}{2413}{{0,0,0,1,1,1,1,2, 1,0,2,0,3,0, 0,2,0,3, 2,1}}
  \form{ 0}{-1.1}{3142}{{0,0,1,0,1,1,2,1, 0,1,0,2,0,3, 2,0,3,0, 1,2}}
  \form{-5}{   0}{2341}{{0,0,1,0,2,0,2,1, 0,1,1,1,1,2, 0,2,0,3, 3,0}}
  \form{-3}{   0}{1432}{{0,0,0,1,0,2,1,2, 1,0,1,1,2,1, 2,0,3,0, 0,3}}
  \form{ 4}{  -3}{3124}{{0,0,1,0,2,0,3,0, 0,1,0,2,0,3, 1,1,2,1, 1,2}}
  \form{-2}{  -3}{1423}{{0,0,0,1,0,2,1,2, 1,0,2,0,3,0, 1,1,2,1, 0,3}}
  \form{ 0}{  -3}{2143}{{0,0,0,1,1,1,2,1, 1,0,2,0,3,0, 0,2,0,3, 1,2}}
  \form{ 2}{  -3}{2314}{{0,0,1,0,2,0,3,0, 0,1,1,1,1,2, 0,2,0,3, 2,1}}
  \form{-4}{  -3}{1342}{{0,0,1,0,1,1,2,1, 0,1,0,2,1,2, 2,0,3,0, 0,3}}
  \form{ 2}{  -6}{2134}{{0,0,1,0,2,0,3,0, 0,1,1,1,2,1, 0,2,0,3, 1,2}}
  \form{ 0}{  -6}{1324}{{0,0,1,0,2,0,3,0, 0,1,0,2,1,2, 1,1,2,1, 0,3}}
  \form{-2}{  -6}{1243}{{0,0,0,1,1,1,2,1, 1,0,2,0,3,0, 0,2,1,2, 0,3}}
  \form{ 0}{  -8}{1234}{{0,0,1,0,2,0,3,0, 0,1,1,1,2,1, 0,2,1,2, 0,3}}

  \begin{scope}[on background layer]
  \draw[T1] (4321) -- (3421); \draw[T2] (4321) -- (4231); \draw[T3] (4321) -- (4312);
  \draw[T2] (4312) -- (4213); \draw[T1] (4312) -- (3412);
  \draw[T1] (4231) -- (3241); \draw[T3] (4231) -- (4132);
  \draw[T2] (3421) -- (2431); \draw[T3] (3421) -- (3412);
  \draw[T1] (4213) -- (3214); \draw[T3] (4213) -- (4123);
  \draw[T2] (3241) -- (2341); \draw[T3] (3241) -- (3142);
  \draw[T2] (3412) -- (2413);
  \draw[T1] (4132) -- (3142); \draw[T2] (4132) -- (4123);
  \draw[T1] (2431) -- (2341); \draw[T3] (2431) -- (1432);
  \draw[T1] (2413) -- (2314); \draw[T3] (2413) -- (1423);
  \draw[T2] (3214) -- (2314); \draw[T3] (3214) -- (3124);
  \draw[T1] (4123) -- (3124);
  \draw[T1] (1432) -- (1342); \draw[T2] (1432) -- (1423);
  \draw[T3] (2341) -- (1342);
  \draw[T2] (3142) -- (2143);
  \draw[T2] (3124) -- (2134);
  \draw[T1] (1423) -- (1324);
  \draw[T1] (2143) -- (2134); \draw[T3] (2143) -- (1243);
  \draw[T3] (2314) -- (1324);
  \draw[T2] (1342) -- (1243);
  \draw[T3] (2134) -- (1234);
  \draw[T2] (1324) -- (1234);
  \draw[T1] (1243) -- (1234);
  \end{scope}
\end{tikzpicture}

\vfill

\noindent
The $24$ worm diagrams of size 4, i.e.\ all full exceptional sequences of $\cA_4$-modules.
Downwards edges are spherical twists:
  \captionnode{T1}{$\TTT_{S(1)}$}, \captionnode{T2}{$\TTT_{S(2)}$}, \captionnode{T3}{$\TTT_{S(3)}$}.
See \hyref{Subsection}{sub:braids-twisting}.
%

\newpage

We mention related classification results. On terminology: a \emph{tilting module} $T$ in this text means a generator of the module category with $\Ext^{>0}(T,T)=0$. When the additional condition $\pd(T)\leq1$ is included, we say \emph{classical tilting module}.

The algebra $\cA_t$ is quasi-hereditary, and by \cite{BHRR}, there are $t!$ tilting modules which are $\Delta$-filtered. This class of modules coincides with classical tilting modules, and with $\tau$-tilting modules; the latter have been studied in \cite{Iyama-Zhang}. See also \cite[Ex.~5.8]{Eisele-Janssens-Raedschelder}.

Next, there are $2^{t+1}-t-2$ \emph{bricks} over $\cA_t$, i.e.\ modules $M$ with $\Hom(M,M)=\kk$. This has been worked out as follows: the algebra $\cA_t$ is $\tau$-tilting finite, and for any such algebra, there is a bijection between indecomposable $\tau$-rigid modules and bricks; see \cite[Thm.~4.1]{Demonet-Iyama-Jasso}. 
Now bricks for $\cA_t$ are in bijection with bricks of the preprojective algebra of type $A_t$,
and those in turn have been parametrised by join-irreducible elements of the Weyl group \cite[Theorems~1.1,1.2]{Iyama-Reading-Reiten-Thomas}. The combinatorics for type $A$ are in \cite[\S 6.1]{Iyama-Reading-Reiten-Thomas}. We give a quick count of bricks in \hyref{Corollary}{cor:count}.

\emph{Semibricks}, i.e.\ sets of Hom-orthogonal bricks, have been classified in \cite{Asai}; there are $(t+1)!$ of them. They correspond to \emph{support $\tau$-tilting modules} \cite{Iyama-Zhang}.

\subsubsection*{Conventions}
We fix an algebraically closed field $\kk$. All algebras, categories and functors are over $\kk$.
Occasionally, we abbreviate dimensions of Hom spaces as $\hom(A,B) \coloneqq \dim \Hom(A,B)$, and likewise for $\Ext^i$.
The shift (or translation, or suspension) functor of a triangulated category is denoted $[1]$.
For objects $A,B$ of a triangulated category, we write $\Hom^\bullet(A,B) \coloneqq \bigoplus_{i\in\IZ}\Hom(A,B[i])[-i]$ fortheir Hom complex; it is a complex of vector spaces with zero differential.

Modules are always left modules. We compose arrows in a path algebra like functions (from right to left).
Given a vertex $i$ for a bound quiver, we denote the corresponding simple, projective and injective modules by $S(i), P(i), I(i)$, respectively.
If $M$ is a module, i.e.\ a representation of the bound quiver, we write $M_i$ for the vector space of $M$ at the vertex $i$.

The symmetric (also: permutation) group on $t$ letters is denoted by $\symm{t}$. It is generated by simple transpositions, which we write $\tau_i \coloneqq (i,i+1)$. The longest word is $\omega \coloneqq (t,\ldots,2,1)$. The braid group on $t$ strands is denoted $\braid{t}$.

\subsubsection*{Acknowledgments}
We thank the Mathematical Research Institute Oberwolfach where a part of this work was done during a Research in Pairs programme. We are also grateful to Osamu Iyama and Martin Kalck for valuable comments.

\section{Definition and basic properties of $\cA_t$}

\subsection{The algebra $\cA_t$}
The algebra $\cA_t$ is defined as the path algebra of the quiver with $t$ vertices and $2t-2$ arrows
\[ \begin{tikzcd}
  1     \ar[r, shift left, "\alpha"] & 
  2     \ar[r, shift left, "\alpha"] \ar[l, shift left, "\beta"] & 
  3     \ar[r, shift left, "\alpha"] \ar[l, shift left, "\beta"] & 
 \cdots \ar[r, shift left, "\alpha"] \ar[l, shift left, "\beta"] & 
  t-1   \ar[r, shift left, "\alpha"] \ar[l, shift left, "\beta"] & 
  t                                  \ar[l, shift left, "\beta"]
\end{tikzcd} \]
bound by a zero relation $\beta\alpha=0$ at $1$, and commutativity relations $\alpha\beta=\beta\alpha$ at intermediate vertices $2,\ldots,t-1$. We emphasise that there is no relation at $t$; this distinguishes $\cA_t$ from the preprojective algebra of the $A_t$-quiver.

As is well known, $\cA_t$ occurs as the Auslander algebra of the ring $R \coloneqq \kk[x]/(x^t)$. This means that $R$ has finitely many indecomposable (finitely generated) modules ---these are $M(i) \coloneqq R/(X^i) = \kk[X]/(X^i)$ for $i=1,\ldots,t$--- and that $\cA_t$ is the endomorphism algebra of their direct sum: $\cA_t = \End_R(M(1)\oplus\cdots\oplus M(t))$.

Moreover, $\cA_t$ also occurs as the endomorphism algebra of a very special tilting object of geometric nature. See \hyref{Appendix}{app:geometry} and \cite{Hille-Ploog1}.

\begin{remark} \label{rem:symmetric}
The opposite algebra of a quiver algebra is given by reverting all arrows. Therefore, $\cA_t \cong \cA_t\op$. In particular, the abelian categories of left and right $\cA_t$-modules are equivalent.  
\end{remark}

\subsection{The projective-injective module $\bm{P(t)}$, and the rank of $\bm{\cA_t}$-modules}

By a straightforward calculation of the injective hull and of the projective cover for the simple module $S(t)$, one finds $P(t)=I(t)$. Moreover, $P(t)$ is the unique indecomposable projective-injective $\cA_t$-module. We have $\udim(P(t)) = (1,2,\ldots,t)$.

\begin{definition} The \emph{rank} of an $\cA_t$-module $M$ is
 \[ \rk M \coloneqq  \dim M_t = \dim \Hom(P(t),M) = \dim \Hom(M, I(t)) \,. \]
\end{definition}

Note that by definition, the rank is additive on short exact sequences of $\cA_t$-modules, and thus induces $\rk\colon K_0(\cA_t) \to \IZ$.

\begin{remark}
The subcategory of $\lmod{\cA_t}$ of rank 0 modules is equivalent to the module category over the preprojective algebra of $\kk A_{t-1}$, and in particular abelian.
\end{remark}

\subsection{Euler pairing and quadratic form}

We recall the projective resolutions of the simple $\cA_t$-modules. 
In particular, $S(1),\ldots,S(t-1)$ have projective dimension two, whereas $S(t)$ has projective dimension one. Hence $\cA_t$ has global dimension 2.

\begin{lemma} \label{lem:resolutions}
The simple representations $S(1),\ldots,S(t)$ have projective resolutions
\[ \begin{array}{rl}
  P(1) \too P(2) \too P(1) \too S(1) \\
  P(i) \too P(i-1) \oplus P(i+1) \too P(i) \too S(i) & \text{for $i=2, \ldots, t-1$}, \\
  P(t-1) \too P(t) \too S(t)
\end{array} \]
\end{lemma}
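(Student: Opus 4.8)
The plan is to build each minimal projective resolution from the left: $P(i)\onto S(i)$ is the projective cover, with kernel the first syzygy $\Omega S(i)=\mathrm{rad}\,P(i)$; the next term is the projective cover of $\mathrm{rad}\,P(i)$, governed by the arrows leaving vertex $i$; and the final term is the second syzygy $\Omega^2 S(i)$, which must be identified and shown to be projective. Throughout I would exploit the Auslander-algebra description $\cA_t=\End_R(\bar M)$ with $\bar M=M(1)\oplus\cdots\oplus M(t)$: it identifies the left $\cA_t$-module $P(i)$ with $\Hom_R(M(i),\bar M)$, so that $\udim(P(i))=(\min(i,1),\dots,\min(i,t))=(1,\dots,i,i,\dots,i)$ and $\Hom_{\cA_t}(P(i),P(j))\cong\Hom_R(M(j),M(i))$, and under which the contravariant functor $\Hom_R(\blank,\bar M)$ sends $\mathrm{add}(\bar M)$ to projective $\cA_t$-modules, with $M(j)\mapsto P(j)$.

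The first step is elementary quiver bookkeeping. For a bound quiver algebra one has $\mathrm{rad}\,P(i)/\mathrm{rad}^2 P(i)\cong\bigoplus_{a\colon i\to j}S(j)$, the sum over the arrows of the quiver with source $i$: this is $S(2)$ for $i=1$, it is $S(i-1)\oplus S(i+1)$ for $2\le i\le t-1$, and it is $S(t-1)$ for $i=t$. Hence the projective cover of $\Omega S(i)=\mathrm{rad}\,P(i)$ is $P(2)$, resp.\ $P(i-1)\oplus P(i+1)$, resp.\ $P(t-1)$; this already fixes the middle terms of the asserted resolutions, and it only remains to compute the kernel $\Omega^2 S(i)$.

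For $1\le i\le t-1$ I would apply $\Hom_R(\blank,\bar M)$ to the almost split sequence of $R$-modules ending at $M(i)$,
\[ 0\too M(i)\xrightarrow{f}E_i\xrightarrow{g}M(i)\too 0,\qquad E_i=M(i-1)\oplus M(i+1),\quad M(0)\coloneqq 0, \]
whose outer terms agree because $R=\kk[x]/(x^t)$ is symmetric with $\Omega_R M(i)=M(t-i)$, so $\tau M(i)=\Omega_R^2 M(i)=M(i)$. Left exactness of this contravariant functor gives an exact sequence
\[ 0\too\Hom_R(M(i),\bar M)\xrightarrow{g^\ast}\Hom_R(E_i,\bar M)\xrightarrow{f^\ast}\Hom_R(M(i),\bar M)\too C\too 0, \]
and $C=S(i)$: a map $M(i)\to\bar M$ lies in $\image f^\ast$ exactly when it factors through the left almost split map $f$, i.e.\ when it is not a split monomorphism, and since $\bar M$ has exactly one summand isomorphic to $M(i)$ this image is precisely the unique maximal submodule $\mathrm{rad}\,P(i)$. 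Left exactness also makes $g^\ast$ injective with $\image g^\ast=\ker f^\ast$, whence $\Omega^2 S(i)\cong\Hom_R(M(i),\bar M)\cong P(i)$ is projective; as every map occurring is a projective cover, the resolution is minimal and $\pd(S(i))=2$. Rewriting $\Hom_R(E_i,\bar M)=P(i-1)\oplus P(i+1)$ ---or $P(2)$ when $i=1$--- gives the two displayed cases of the lemma.

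The module $S(t)$ needs a separate argument, since $M(t)=R$ is projective and carries no almost split sequence. Here the canonical surjection $\pi\colon M(t)=R\onto R/(x^{t-1})=M(t-1)$ induces an injection $\pi^\ast\colon\Hom_R(M(t-1),\bar M)\hookrightarrow\Hom_R(M(t),\bar M)\cong\bar M$ with image $\{\psi\colon R\to\bar M\mid x^{t-1}\psi(1)=0\}$; this submodule is $\cA_t$-stable and has one-dimensional quotient $M(t)/xM(t)=S(t)$ inside $\bar M\cong P(t)$, so it equals $\mathrm{rad}\,P(t)$. Thus $\Omega S(t)\cong P(t-1)$ is already projective, giving the resolution $0\to P(t-1)\to P(t)\to S(t)\to 0$ with $\pd(S(t))=1$; together with the previous step this also yields $\gldim{\cA_t}=2$. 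The only genuinely substantial point is the identification $\Omega^2 S(i)\cong P(i)$ for $i<t$, i.e.\ that the resolutions terminate after two steps; I would extract it from almost split sequences as above, the alternative being to write each $P(i)$ out as an explicit ``staircase'' representation of the bound quiver and check exactness of the complexes vertex by vertex, which is longer but entirely routine.
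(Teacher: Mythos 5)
The paper states this lemma without proof --- it merely ``recalls'' the resolutions as standard facts --- so there is no in-paper argument to compare against. Your proposal is a correct and complete proof, and it is the classical one: applying $\Hom_R(\blank,\bar M)$ to the almost split sequences $0\to M(i)\to M(i-1)\oplus M(i+1)\to M(i)\to 0$ of $R=\kk[x]/(x^t)$ is exactly Auslander's argument that Auslander algebras have global dimension at most $2$, and it produces the displayed resolutions with the correct terms. All the individual steps check out: $P(i)=\cA_t e_i=\Hom_R(M(i),\bar M)$ under the paper's left-module convention; $\tau M(i)=\Omega_R^2M(i)=M(i)$ since $R$ is symmetric; the cokernel of $f^\ast$ is $S(i)$ because the maps factoring through the left almost split map are precisely the non-split monomorphisms $M(i)\to\bar M$, i.e.\ (as $\End_R(M(i))$ is local and $\bar M$ has a single summand isomorphic to $M(i)$) precisely $\mathrm{rad}\,P(i)$; and the separate treatment of $S(t)$ via $\pi^\ast\colon P(t-1)\hookrightarrow P(t)$ with one-dimensional cokernel is right, since a codimension-one submodule of the local module $P(t)$ must be its radical. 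The one place where you are slightly terse is the identification $\image f^\ast=\mathrm{rad}\,P(i)$, but the fact you invoke is standard and your one-line justification is the correct reason. The ``routine alternative'' you mention --- writing each $P(i)$ as an explicit staircase representation with $\udim P(i)=(1,\dots,i,i,\dots,i)$ and checking exactness vertex by vertex --- is in effect what the paper relies on implicitly when it later describes projectives by ``following arrows emanating out of $j$''; your AR-theoretic route buys a conceptual explanation of why the second syzygy is again $P(i)$, at the cost of quoting the almost split sequences for $\kk[x]/(x^t)$.
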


The \emph{Euler pairing} of two $\cA_t$-modules $M$ and $N$ is defined as
 \[ \chi(M,N) \coloneqq \hom(M,N) - \ext^1(M,N) + \ext^2(M,N) \, . \]
$\chi(M,N)$ only depends on the classes of $M$ and $N$ in the Grothendieck
group $K_0(\cA_t) \cong \IZ^t$.
The associated \emph{quadratic form} is given by $q(M) \coloneqq \chi(M,M)$.

\begin{lemma} \label{lem:euler-pairing}
Writing $\udim(M) = (m_1,\ldots,m_t)$ and $\udim(N) = (n_1,\ldots,n_t)$,
\[ \chi(M,N) = m_t n_t + \sum_{i=1}^{t-1} m_i(n_i-n_{i+1}) + n_i(m_i-m_{i+1}) \, . \]
\end{lemma}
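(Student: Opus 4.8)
The plan is to compute $\chi(M,N)$ directly from the projective resolutions in \hyref{Lemma}{lem:resolutions}. Since $\chi(M,N)$ depends only on the class $[M]\in K_0(\cA_t)$, it suffices to verify the formula when $M$ is one of the simple modules $S(1),\ldots,S(t)$ (these form a basis of $K_0(\cA_t)$), and then invoke bilinearity in both arguments. For a simple $S(j)$, one has
\[ \chi(S(j),N) = \sum_{i\ge 0} (-1)^i \dim\Hom(P_i, N) \]
where $P_\bullet\to S(j)$ is the projective resolution, because $\Ext^k(S(j),N)$ is computed as the cohomology of $\Hom(P_\bullet,N)$ and the alternating sum of dimensions of cohomology equals the alternating sum of dimensions of the terms. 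Finally, $\dim\Hom(P(i),N) = n_i$, the dimension of $N$ at vertex $i$, since $P(i)$ is the projective cover of $S(i)$.

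So the computation reduces to reading off, for each $j$, the alternating sum of the $n_i$'s appearing in the resolution:
\begin{itemize}
\item[] for $j=1$: $\chi(S(1),N) = n_1 - n_2 + n_1 = 2n_1 - n_2$;
\item[] for $j=i$ with $2\le i\le t-1$: $\chi(S(i),N) = n_i - (n_{i-1}+n_{i+1}) + n_i = 2n_i - n_{i-1} - n_{i+1}$;
\item[] for $j=t$: $\chi(S(t),N) = n_t - n_{t-1}$.
\end{itemize}
Then $\chi(M,N) = \sum_j m_j\,\chi(S(j),N)$, and it remains to check that the proposed closed formula
\[ m_t n_t + \sum_{i=1}^{t-1} \bigl( m_i(n_i-n_{i+1}) + n_i(m_i-m_{i+1}) \bigr) \]
expands to the same thing. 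This is a routine rearrangement: collecting the coefficient of each $n_i$ in the closed formula, for $1<i<t$ one gets contributions $m_i$ (from the $m_i n_i$ part of term $i$), $m_i$ again (from the $n_i m_i$ part of term $i$), $-m_{i-1}$ (from $m_{i-1}\cdot(-n_i)$ in term $i-1$), and $-m_{i+1}$ (from $-n_i m_{i+1}$ in term $i$), totalling $2m_i - m_{i-1} - m_{i+1}$, which matches $\chi(S(i),\cdot)$ by symmetry of the roles of $m$ and $n$; the boundary cases $i=1$ and $i=t$ are checked the same way.

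I do not expect a genuine obstacle here: the only thing to be careful about is the bookkeeping at the two boundary vertices $1$ and $t$, where the resolutions are shorter or asymmetric, and making sure the symmetry $\chi(M,N)$ versus $\chi(N,M)$ (the closed formula is manifestly symmetric) is consistent with the simple-module computation — indeed $\chi(S(i),S(j))$ is symmetric, reflecting that the underlying graph is a path. One could alternatively phrase the whole argument as: the Euler form is the bilinear form whose Gram matrix in the basis of simples is $C^{-\mathsf{T}}$ where $C$ is the Cartan matrix $C_{ij}=\dim\Hom(P(j),S(i))$, but the direct resolution computation above is the cleanest and I would present that.
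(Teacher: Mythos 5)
Your proof is correct and is essentially the paper's argument: both rest on the projective resolutions of the simples from Lemma~\ref{lem:resolutions} together with $\dim\Hom(P(i),N)=n_i$, the only difference being that you reduce to $M=S(j)$ via bilinearity on $K_0$ while the paper assembles an explicit three-term projective resolution of a general $M$ by the horseshoe lemma and takes the alternating sum of the terms of $\Hom(-,N)$ applied to it. The paper's slightly more laborious route has the side benefit that the resulting complex is reused verbatim in the next subsection as $\cC(M,N)$ to compute the individual $\Ext$ groups, not just their alternating sum.
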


\begin{proof}
Given any representation $M$, we can apply the resolutions of \hyref{Lemma}{lem:resolutions} via the horseshoe lemma to a composition series of $M$, and obtain a projective resolution for $M$:
\[
\bigoplus_{i=1}^{t-1} P(i) \otimes M_i \too
\bigoplus_{i=2}^{t-1} P(i-1) \otimes M_i \oplus P(i+1) \otimes M_i \too
\bigoplus_{i=1}^{t}   P(i) \otimes M_i \too M
\]
Using the functor $\Hom(\blank,N)$ on this resolution produces a complex with terms
\[
\bigoplus_{i=1}^t\Hom(M_i,N_i) \too \bigoplus_{i=2}^t \Hom(M_i,N_{i-1}) \oplus \Hom(M_{i-1},N_i) \too \bigoplus_{i=1}^{t-1} \Hom(M_i,N_i)
\]
whose cohomologies are $\Ext^i(M,N)$.
Hence
\[ \chi(M,N) = \sum_{i=1}^t m_i n_i - \sum_{i=2}^t (m_i n_{i-1} + m_{i-1} n_i) + \sum_{i=1}^{t-1} m_i n_i \]
which translates to the formula of the proposition.
\end{proof}

\begin{corollary} \label{cor:quadratic-form}
The quadratic form of $\cA_t$ is positive definite:
\[ q(d_1,\ldots,d_t) = d_1^2 + (d_1 - d_2)^2 + (d_2 - d_3)^2 + \ldots + (d_{t-1} - d_t)^2 \, . \] 
\end{corollary}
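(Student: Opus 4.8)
The plan is to deduce the Corollary directly from \hyref{Lemma}{lem:euler-pairing} by specialising the Euler pairing to the diagonal $M = N$ and then performing a short telescoping rearrangement; no substantial new idea is needed.

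First I would set $m_i = n_i = d_i$ in the formula of \hyref{Lemma}{lem:euler-pairing}. Since the two mixed terms $m_i(n_i - n_{i+1})$ and $n_i(m_i - m_{i+1})$ then coincide, this gives
\[ q(d_1,\ldots,d_t) = \chi(M,M) = d_t^2 + 2\sum_{i=1}^{t-1} d_i(d_i - d_{i+1}) \, . \]
(Recall that $\chi$, and hence $q$, depends only on the dimension vector, so the left-hand side is well defined on $K_0(\cA_t)\cong\IZ^t$.)

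The only computational step is to rewrite each summand via the elementary identity $2x(x-y) = (x^2 - y^2) + (x-y)^2$, applied with $x = d_i$ and $y = d_{i+1}$:
\[ q(d_1,\ldots,d_t) = d_t^2 + \sum_{i=1}^{t-1}\bigl(d_i^2 - d_{i+1}^2\bigr) + \sum_{i=1}^{t-1}(d_i - d_{i+1})^2 \, . \]
The middle sum telescopes to $d_1^2 - d_t^2$, which cancels the leading $d_t^2$ and leaves exactly the asserted expression $d_1^2 + (d_1 - d_2)^2 + \cdots + (d_{t-1} - d_t)^2$.

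Positive definiteness is then immediate from this presentation: $q$ is visibly a sum of squares, so $q \geq 0$, and $q(d) = 0$ forces $d_1 = 0$ together with $d_i = d_{i+1}$ for all $i$, hence $d = 0$ by induction on $i$. I do not anticipate a genuine obstacle; the only points requiring care are the index bookkeeping in the telescoping sum and the observation that the "extra" term $d_t^2$, coming from the rank contribution $m_t n_t$ in the Euler pairing, is precisely what is needed to close up the telescope.
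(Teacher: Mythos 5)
Your proposal is correct and is exactly the computation the paper leaves implicit: the Corollary is presented as an immediate consequence of Lemma~\ref{lem:euler-pairing}, and your specialisation to $m_i=n_i=d_i$ followed by the telescoping identity $2x(x-y)=(x^2-y^2)+(x-y)^2$ is the intended (and essentially only) route. The algebra checks out, and the positive-definiteness argument from the sum-of-squares presentation is complete.
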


\subsection{Extensions and projective resolutions}

We next show that homomorphisms and extensions among $\cA_t$-modules are computed explicitly by the complex 
\[ \cC(M,N) \colon \cC^0(M,N) \xrightarrow{d^0} \cC^1(M,N) \xrightarrow{d^1} \cC^2(M,N) \]
for $M,N\in\lmod{\cA_t}$, with the terms already seen in the proof of \hyref{Lemma}{lem:euler-pairing}
\[
\bigoplus_{i=1}^t\Hom(M_i,N_i) \xrightarrow{d^0} \bigoplus_{i=2}^t \Hom(M_i,N_{i-1}) \oplus \Hom(M_{i-1},N_i)
\xrightarrow{d^1} \bigoplus_{i=1}^{t-1} \Hom(M_i,N_i)
\]
where the differentials are induced by the arrows of the representions in $M$ and $N$:
\[ \begin{array}{r@{~=~}l@{\hspace{2em}\text{for~}}l}
 d^0(f_i)   & (\beta^N_{i-1}f_i + f_i\beta^M_i, -\alpha^N_i f_i - f_i\alpha^M_{i-1}) & f_i\colon M_i\to N_i,\\[0.7ex]
 d^1(g_i,0) & \alpha^N_{i-1}g_i + g_i\alpha^M_{i-1}                                  & g_i \colon M_i\to N_{i-1},\\[0.7ex]
 d^1(0,h_i) & \beta^N_i h_i + h_i\beta^M_i                                           & h_i\colon M_{i-1}\to N_i.
\end{array} \]

Thus, $\cC(M,N)$ becomes a complex of vector spaces. Note that $\cC(M,N)$ is not symmetric: $\cC^2(M,N) = \cC^0(M,N) \oplus \Hom(M_t, N_t)$. The vector space in the vertex $t$ plays an exceptional role.

\begin{lemma}
The functors
  $H^i(\cC(\blank,N)) \colon \lmod{\cA_t}\op \to \lmod{\cA_t}$
form a $\delta$-functor, for a fixed $\cA_t$-module $N$.
\end{lemma}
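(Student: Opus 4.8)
The plan is to exhibit $\cC(\blank,N)$ as (quasi-isomorphic to) a total Hom-complex and then invoke the standard long exact sequence machinery, rather than checking the $\delta$-functor axioms by hand. Concretely, I would first record that for a short exact sequence $0\to M'\to M\to M''\to 0$ of $\cA_t$-modules, applying $\Hom(\blank,N_i)$ at each vertex $i$ yields short exact sequences $0\to\Hom(M''_i,N_i)\to\Hom(M_i,N_i)\to\Hom(M'_i,N_i)\to0$, because each $N_i$ is just a $\kk$-vector space and $\Hom_\kk(\blank,N_i)$ is exact. Taking the (finite) direct sums over $i$ that define $\cC^0,\cC^1,\cC^2$, one gets a short exact sequence of complexes
\[ 0 \too \cC(M'',N) \too \cC(M,N) \too \cC(M',N) \too 0 \]
provided one checks that the differentials $d^0,d^1$ are natural in the first variable, i.e.\ commute with the maps induced by $M'\to M\to M''$. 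This naturality is immediate from the explicit formulas for $d^0$ and $d^1$: they are built from pre- and post-composition with the structure maps $\alpha,\beta$, and a morphism of representations $M\to M$ intertwines these structure maps by definition. So the only genuinely mechanical point is this compatibility, and it is one line.

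Next I would pass to cohomology: the short exact sequence of complexes above gives, functorially, a long exact sequence
\[ \cdots \too H^i(\cC(M'',N)) \too H^i(\cC(M,N)) \too H^i(\cC(M',N)) \xto{\delta} H^{i+1}(\cC(M'',N)) \too \cdots \]
with connecting maps $\delta$ natural in the short exact sequence, by the usual snake-lemma argument. This is exactly the data of a (cohomological) $\delta$-functor on $\lmod{\cA_t}\op$: the family $\{H^i(\cC(\blank,N))\}_{i\geq0}$ is a sequence of additive functors (additivity is clear since $\cC(\blank,N)$ and cohomology are additive), together with connecting morphisms making the long sequence exact, compatibly with morphisms of short exact sequences.

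I would close by noting the targets land in $\lmod{\cA_t}$: since $N$ is an $\cA_t$-module, each $\Hom_\kk(M_i,N_i)$ carries the residual $\cA_t$-action coming from the $\alpha,\beta$ on $N$, the differentials $d^0,d^1$ are $\cA_t$-linear for this action (again visible from the formulas, which only involve the $N$-side structure maps on the outgoing side), so $H^i(\cC(\blank,N))$ is an $\cA_t$-module as claimed. The main obstacle — and it is mild — is bookkeeping: making the reindexing of direct summands in $\cC^1$ and $\cC^2$ explicit enough that the three short exact sequences genuinely assemble into a short exact sequence of complexes, and that the induced $\cA_t$-module structures match up. Everything else is the formal $\delta$-functor formalism applied to a short exact sequence of complexes of vector spaces, which one may simply cite.
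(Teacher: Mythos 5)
Your argument is essentially the paper's: exactness of $\Hom_\kk(\blank,N_i)$ at each vertex (these are just $\kk$-vector spaces) turns a short exact sequence of modules into a short exact sequence of complexes, and the resulting long exact cohomology sequence with its natural connecting maps is precisely the contravariant $\delta$-functor data. The only extraneous point is your closing sketch of a ``residual $\cA_t$-action'' on the terms $\Hom_\kk(M_i,N_i)$, which is neither needed nor really justified as stated (post-composition with the structure maps of $N$ changes the vertex index on the $N$-side only); otherwise this matches the paper's proof.
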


\begin{proof}
By construction, $\cC(M,N)$ is covariantly functorial in $N$ and contravariantly functorial in $M$. An exact sequence $M'\to M\to M''$ of modules induces an exact sequence $\cC(M'',N) \to \cC(M,N) \to \cC(M',N)$ of complexes. Hence a short exact sequence $0 \to M' \to M \to M'' \to 0$ yields a long exact sequence of modules

\medskip
\noindent
\resizebox{\textwidth}{!}{%
$0 \to \cH^0(M'') \to \cH^0(M) \to \cH^0(M') \xxto{\delta}
       \cH^1(M'') \to \cH^1(M) \to \cH^1(M') \xxto{\delta}
       \cH^2(M'') \to \cH^2(M) \to \cH^2(M') \to 0$}

\medskip
\noindent
where we set $\cH^i(\blank) \coloneqq H^i(\cC(\blank,N))$. This long exact cohomology sequence means that the $H^i(\cC(\blank,N))$ form a contravariant $\delta$-functor.
\end{proof}

\begin{proposition} \label{prop:ext-spaces}
$\Ext^i(M,N) = H^i(\cC(M,N))$ for $M,N \in \lmod{\cA_t}$ and $i\in\IZ$.
\end{proposition}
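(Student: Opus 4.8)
The plan is to identify the complex $\cC(M,N)$ with $\Hom(P_\bullet,N)$, where $P_\bullet \to M$ is precisely the projective resolution built in the proof of \hyref{Lemma}{lem:euler-pairing} by feeding the resolutions of \hyref{Lemma}{lem:resolutions} into the horseshoe lemma along a composition series of $M$. Each $P_j$ is a direct sum of terms $P(i)\otimes M_i$, and $\Hom(P(i)\otimes M_i,N) = \Hom(M_i,N_i)$ since $\Hom(P(i),N) = N_i$; this is exactly how the groups $\cC^j(M,N)$ were read off from $P_\bullet$ in that proof, so the terms of $\Hom(P_\bullet,N)$ and of $\cC(M,N)$ coincide for $j=0,1,2$. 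Once the differentials are matched, $H^j(\cC(M,N)) = H^j(\Hom(P_\bullet,N)) = \Ext^j(M,N)$ holds by the definition of $\Ext$, with $\Ext^j = 0$ for $j \ge 3$ in accordance with $\cA_t$ having global dimension $2$ (and with $P_\bullet$ having length two).

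So the remaining task is to check that the differentials of $\Hom(P_\bullet,N)$ are the stated maps $d^0$ and $d^1$. The structure maps of $P_\bullet$ are assembled from two kinds of pieces: the internal differentials of the simple resolutions of \hyref{Lemma}{lem:resolutions}, which are right multiplications by the arrows $\alpha$ and $\beta$, and therefore become postcomposition with the structure maps $\alpha^N,\beta^N$ of $N$ after applying $\Hom(\blank,N)$; and the horseshoe connecting maps, which are built from the $\cA_t$-action on $M$ and hence become precomposition with the structure maps $\alpha^M,\beta^M$ of $M$. Collecting these contributions summand by summand produces the displayed formulas for $d^0$ and $d^1$, the signs depending only on the splittings chosen in the horseshoe construction and being immaterial for cohomology. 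Two consistency checks: $\ker d^0$ consists of the tuples $(f_i)_i$ whose components commute with every arrow, i.e.\ of $\Hom(M,N)$, which matches $\Ext^0$; and the identity $d^1 d^0 = 0$ encodes the defining relations $\beta\alpha = 0$ at vertex $1$ and $\alpha\beta = \beta\alpha$ at the intermediate vertices --- the same relations that made $\cC(\blank,N)$ a complex in the $\delta$-functor lemma preceding the proposition.

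One can also reach the conclusion more invariantly. That $\delta$-functor lemma exhibits $H^\bullet(\cC(\blank,N))$ as a cohomological $\delta$-functor on $\lmod{\cA_t}\op$, and the computation of $\ker d^0$ identifies its degree-zero part with $\Hom(\blank,N)$. Since $\Ext^{>0}(P,N) = 0$ for projective $P$, the family $\Ext^\bullet(\blank,N)$ is the universal $\delta$-functor extending $\Hom(\blank,N)$, so there is a unique morphism of $\delta$-functors $\eta\colon \Ext^\bullet(\blank,N) \to H^\bullet(\cC(\blank,N))$ that is the identity in degree zero. Dimension-shifting along a presentation $0 \to \Omega M \to P \to M \to 0$ with $P$ projective then shows $\eta$ to be an isomorphism in every degree as soon as $H^1(\cC(P,N)) = H^2(\cC(P,N)) = 0$ for all projective $P$. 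This vanishing is the one genuinely computational ingredient: by additivity it reduces to $P = P(j)$, where it can be extracted from the explicit complex using $\udim(P(j)) = (1,2,\ldots,j,j,\ldots,j)$ --- or, in the resolution picture of the first approach, it is automatic, because the (non-minimal) horseshoe resolution of a projective module still computes its Ext groups, which vanish in positive degrees. The main obstacle in either route is thus the bookkeeping: matching the signs and directions of $d^0,d^1$ on the one hand, or carrying out the projective vanishing on the other --- both routine, but requiring care with the conventions for $\alpha,\beta$ and for $P(i) = \cA_t e_i$.
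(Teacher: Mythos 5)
Your proposal is correct, and your second route (universal $\delta$-functor, identification of the degree-zero part with $\Hom(\blank,N)$, erasability on projectives, dimension shifting) is essentially the paper's own argument; the paper likewise checks $H^0(\cC(M,N))=\Hom(M,N)$, proves $H^{>0}(\cC(P,N))=0$ for projectives, and concludes by the universal property of derived functors. Your first route --- identifying $\cC(M,N)$ outright with $\Hom(P_\bullet,N)$ for the horseshoe resolution of \hyref{Lemma}{lem:euler-pairing} --- is not what the paper does explicitly (there only the \emph{terms} of that complex are used, for the Euler-characteristic count), but it is a legitimate and arguably more self-contained variant: once the differentials are matched, the proposition and the projective vanishing both come for free. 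The one place you are slightly too quick is the claim that the vanishing for $P=P(j)$ ``can be extracted from the explicit complex using $\udim(P(j))$'': the dimension vector only gives the Euler characteristic of $\cC(P(j),N)$, which together with $H^0=\Hom(P(j),N)$ ties $H^1$ to $H^2$ but does not kill either one separately. One genuinely needs to verify surjectivity of $d^1$ on $\cC(P(j),N)$ --- the paper does this for $N=S(l)$ and propagates to all $N$ by extensions, then recovers $H^1=0$ from the Euler characteristic --- or else to have completed the identification of your first route, after which the vanishing is automatic. So neither of your routes has a gap in conception, but the surjectivity of $d^1$ on projectives (equivalently, the matching of differentials with the horseshoe resolution) is the computation that must actually be carried out rather than deferred.
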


\begin{proof}
Comparing the differential $d^0\colon \cC^0(M,N)\to\cC^1(M,N)$ with morphisms of $\cA_t$-representations gives $H^0(\cC(M,N)) = \ker(d^0) = \Hom(M,N)$ right away. Next we claim that $H^i(\cC(\blank,N))$ is an erasable $\delta$-functor, i.e.\ $H^i(\cC(P,N))=0$ for projective $\cA_t$-modules and $i>0$. For this, first take $P=P(j)$ and $N=S(l)$. Then
\[ \cC(P(j),S(l)) = [P(j)_l^* \xxto{d^0} P(j)_{l+1}^* \oplus P(j)_{l-1}^* \xxto{d^1} P(j)_l^*] \]
(with the obvious modifications if $l=1$ or $l=t$). To check that $d^1$ is surjective, we use three facts: generally for path algebras, the projective representation $P(j)$ corresponding to a vertex $j$ of the quiver are obtained by following arrows emanating out of $j$; the special form of the quiver for $\cA_t$; and the definition of $d^1$.

Since the simple modules generate $\lmod{\cA_t}$ via extensions, the surjectivity of $d^1$ for all $N$ follows, i.e.\ $H^2(\cC(P,N))=0$.
We then get $H^1(\cC(P,N))=0$ because the Euler pairings for $\Hom^\bullet(M,N)$ and for $\cC(M,N)$ coincide. Finally, the claim follows from the universal property of derived functors \cite[\S2]{Weibel}.
\end{proof}

\subsection{Inequalities for $\bm{{\Hom}}$ and $\bm{{\Ext}}$ dimensions}

\begin{proposition} \label{prop:inequalities}
Let $M,N$ be $\cA_t$-modules. Then
\begin{enumerate}
\item $\hom(M,N) \geq \ext^2(N,M)$.
\item $\hom(M,N) = \ext^2(N,M)$ if and only if $\rk M=0$ or $\rk N=0$.
\item If $\rk M=\rk N=1$ and $\hom(M,N) \geq 2$, then $\ext^2(N,M) \geq 1$.
\end{enumerate}
\end{proposition}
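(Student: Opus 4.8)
The plan is to use the explicit complex $\cC(M,N)$ from \hyref{Proposition}{prop:ext-spaces} throughout, since all three statements are really statements about the cohomology of that complex. Recall the key asymmetry: $\cC^0(M,N) = \bigoplus_{i=1}^t \Hom(M_i,N_i)$ has a summand $\Hom(M_t,N_t)$ that does not appear in $\cC^1$, whereas $\cC^2(M,N) = \bigoplus_{i=1}^{t-1}\Hom(M_i,N_i)$ does not. I would first set up the comparison between $\cC(M,N)$ and $\cC(N,M)$: transposing linear maps $f\mapsto f^*$ identifies $\bigoplus_i \Hom(M_i,N_i)$ with $\bigoplus_i \Hom(N_i,M_i)$, and identifies the middle terms of $\cC(M,N)$ and $\cC(N,M)$ (swapping the two $\beta$- and $\alpha$-summands). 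Under this, the differential $d^1$ of $\cC(N,M)$ becomes (up to sign) the transpose of $d^0$ of $\cC(M,N)$, but only on the summands indexed $1,\ldots,t-1$; the extra summand $\Hom(N_t,M_t)$ of $\cC^2(N,M)$ has no counterpart in $\cC^0(M,N)$. This is the structural heart of all three claims.

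For (1): $\ext^2(N,M) = \coker(d^1\colon \cC^1(N,M)\to \cC^2(N,M))$. Dualising, this has the same dimension as $\ker$ of the transpose map $\cC^2(N,M)^* \to \cC^1(N,M)^*$. By the identification above, that transpose map is, on the summands $1,\ldots,t-1$, exactly $d^0$ of $\cC(M,N)$ restricted to those summands; and there is one extra coordinate $\Hom(M_t,N_t)$ in $\cC^0(M,N)$ that is simply not seen. So $\ext^2(N,M) = \dim\ker\bigl(d^0|_{\bigoplus_{i=1}^{t-1}\Hom(M_i,N_i)}\bigr)$, i.e.\ the space of tuples $(f_1,\ldots,f_{t-1})$ (with $f_t$ forced to $0$) lying in $\ker d^0$. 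On the other hand $\hom(M,N) = \ker(d^0)$ on the full $\bigoplus_{i=1}^t \Hom(M_i,N_i)$. Forgetting the last coordinate gives a linear map from $\ker d^0$ to the $\ext^2(N,M)$-space, and I would check this map is surjective (any $f_t$ can be appended by $0$, and the $d^0$-condition only couples $f_t$ to $f_{t-1}$ via the $\beta$-component at vertex $t$, which—using $\beta\alpha=0$ at vertex $1$ is irrelevant here, rather using that $d^1$'s cokernel genuinely ignores index $t$—lands in a slot already accounted for); this yields $\hom(M,N)\ge\ext^2(N,M)$.

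For (2): equality in the surjection above holds iff its kernel is zero, i.e.\ iff no nonzero $f_t\colon M_t\to N_t$ extends to an element of $\ker d^0$. If $\rk M = 0$ then $M_t = 0$, so there is no room; similarly if $\rk N = 0$ then $N_t = 0$. Conversely, if both $M_t\neq 0$ and $N_t\neq 0$ I must exhibit a nonzero $f_t$ that does extend by zeros in all other coordinates: the only constraint from $d^0$ involving $f_t$ is $\beta^N_{t-1}f_t + f_t\beta^M_t$ (the $\alpha$-component at $t$ doesn't exist since there is no arrow out of $t$), and choosing $f_t$ to kill the relevant subspace—e.g.\ $f_t$ factoring through $M_t/\image\alpha^M_{t-1}\cdots$ and into $\ker\beta^N_{t-1}$—makes this vanish; one must argue such a nonzero $f_t$ always exists, which follows from dimension counting on the rank-$1$-at-worst pieces near vertex $t$ together with the structure of $\cA_t$-modules (I expect to invoke that $M_t, N_t \neq 0$ forces the modules to have nonzero ``$t$-part'' on which $\beta$ acts controllably).

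For (3): now $\rk M = \rk N = 1$, so $M_t$ and $N_t$ are one-dimensional and $\Hom(M_t,N_t)=\kk$. Suppose $\hom(M,N)\geq 2$. Consider again the surjection $\ker d^0 \onto (\ext^2(N,M)\text{-space})$ with kernel $K \subseteq \Hom(M_t,N_t)=\kk$. If $K\neq 0$ then $K=\kk$, meaning the ``$t$-coordinate'' map $\ker d^0\to\kk$ is zero, so every $f=(f_1,\ldots,f_t)\in\ker d^0$ has $f_t$ arbitrary subject to no constraint—but then $\hom(M,N)\ge 2$ forces a nonzero tuple with $f_1=\cdots=f_{t-1}=0$, $f_t\neq 0$, and this, read in $\cC(N,M)$, produces a nonzero class in $\ext^2(N,M)$, done. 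If instead $K = 0$, then $\hom(M,N) = \ext^2(N,M) + \dim\{\text{the }t\text{-coordinate image}\} \le \ext^2(N,M) + 1$; since $\hom(M,N)\ge 2$ this gives $\ext^2(N,M)\ge 1$. Either way the conclusion follows.

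\textbf{Main obstacle.} The genuinely delicate point is the precise bookkeeping in the comparison between $\cC(M,N)$ and $\cC(N,M)$—getting the signs and the matching of the $\alpha$/$\beta$ summands right, and above all correctly isolating exactly \emph{which} single coordinate ($\Hom(M_t,N_t)$ versus $\Hom(N_t,M_t)$) is the source of the asymmetry. Once that linear-algebra dictionary is nailed down, (1) is immediate and (2), (3) become short case analyses; but if the dictionary is set up sloppily the ``off by one in the wrong slot'' errors will make (3) in particular come out false. I would therefore spend most of the write-up making the transpose identification of the two complexes completely explicit before extracting any of the three inequalities.
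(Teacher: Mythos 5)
Your overall strategy is the same as the paper's: both identify $\Ext^2(N,M)^*$ with the $H^0$ of the subcomplex of $\cC(M,N)$ obtained by deleting the summand $\Hom(M_t,N_t)$ from $\cC^0(M,N)$ (the paper packages this as the maximal symmetric subcomplex $\cS(M,N)$ together with the duality $\cS(M,N)\cong\cS(N,M)^*[-2]$), and both locate the entire asymmetry in that one summand. The difference is in how the two spaces are then compared, and here your comparison map points the wrong way. Under the duality, $\Ext^2(N,M)^*$ is the \emph{subspace} $\ker(d^0)\cap\cS^0(M,N)=\{f\in\Hom(M,N):f_t=0\}$ of $\Hom(M,N)$. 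There is no well-defined map ``forget the last coordinate'' from $\ker(d^0)$ onto this subspace, because dropping $f_t$ destroys the commutation conditions on the edge between vertices $t-1$ and $t$: already for $M=N=\Delta(2)$, forgetting $f_2$ from the identity gives $(1,0)$, which is not a morphism. What you actually have is the inclusion $\Ext^2(N,M)^*\hookrightarrow\Hom(M,N)$, whose cokernel embeds into $\Hom(M_t,N_t)$ via $f\mapsto f_t$. This still yields (1) at once, and it yields (3) cleanly, since $\hom(M,N)\le\ext^2(N,M)+\dim\Hom(M_t,N_t)=\ext^2(N,M)+1$ when both ranks are $1$; but your case analysis for (3) built on the surjection --- in particular the claim that a morphism supported only at vertex $t$ ``produces a nonzero class in $\ext^2(N,M)$'' --- does not survive the correction, since under the duality the classes in $\Ext^2(N,M)^*$ correspond to morphisms with $f_t=0$, not $f_t\neq0$.

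For (2), the corrected bookkeeping shows that equality holds if and only if every morphism $M\to N$ vanishes at vertex $t$. The direction ``$\rk M=0$ or $\rk N=0$ implies equality'' is then immediate, as in the paper. For the converse you correctly isolate the crux --- one must produce, whenever $M_t\neq0$ and $N_t\neq0$, a morphism $M\to N$ nonzero at vertex $t$ --- but the existence argument you defer to ``dimension counting'' cannot be supplied: for $t=2$ one has $\Hom(\Delta(2),S(2))=0$ although both modules have rank $1$, and $\Ext^2(S(2),\Delta(2))=0$ because $\pd S(2)=1$, so equality holds with both ranks positive. Thus the ``only if'' half of (2) fails as literally stated; note that the paper's own one-line proof of (2) also only establishes the ``if'' direction, and in the places where the strict inequality is later invoked an explicit morphism nonzero at vertex $t$ is available (the identity of a brick, or the maps constructed in the proof of Proposition~\ref{prop:worm-diagrams}), which is the hypothesis one genuinely needs.
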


For a geometric proof of the crucial inequality (1), see \cite{Hille-Ploog3}.

\begin{proof}
(1) Consider the maximal symmetric subcomplex $\cS(M,N) \subset \cC(M,N)$, with terms
  $\cS^0(M,N) \coloneqq \bigoplus_{i=1}^{t-1} \Hom(M_i,N_i) \subseteq \cC^0(M,N)$ and
  $\cS^p(M,N) \coloneqq \cC^p(M,N)$ for $p=1,2$,
and the differentials of $\cC(M,N)$.
Then $\cS(M,N)$ is selfdual in the sense $\cS(M,N) \cong \cS(N,M)^*[-2]$; for the isomorphism one needs to change one sign in the differential. Therefore, the following inclusion yields the claim:
\[ \Ext^2(N,M)^* = H^2(\cS(N,M))^* \cong H^0(\cS(M,N)) \subseteq H^0(\cC(M,N)) = \Hom(M,N) \, . \]

(2) follows from (1) and the construction of $\cS(M,N)\subseteq\cC(M,N)$.

(3) is similar: $\cC^0(M,N) \cong \cC^2(N,M)^*\oplus\Hom(M_t,N_t)$. With $\rk M = \rk N = 1$, this discrepancy amounts to 1 between $\Hom$ and $\Ext^2$.
\end{proof}

\subsection{Serre functor and Calabi--Yau objects}

Because $\cA_t$ has finite global dimension, its bounded derived category $\Db(\cA_t)$ has a Serre functor $\SSS$ or, equivalently, Auslander--Reiten sequences (see \cite[Thm.~I.2.4]{Reiten-vandenBergh} for the equivalence). Therefore, $\SSS$ is the derived Nakayama functor and $\SSS = \tau[1]$ where $\tau$ is the AR translation. See theorem and proof on page 37 of \cite{Happel}.

An object $M \in \Db(\cA_t)$ is called \emph{$d$-Calabi--Yau} (often abbreviated to $d$-CY) if $\SSS(A) \cong A[d]$, where $d\in\IZ$.

\begin{proposition} \label{prop:CY-objects}
The projective-injective module $P(t) \in \Db(\cA_t)$ is $0$-CY, and
the simple modules $S(1),\ldots,S(t-1) \in \Db(\cA_t)$ are $2$-CY.  
\end{proposition}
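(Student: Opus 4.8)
The plan is to use that, since $\cA_t$ has finite global dimension, the Serre functor of $\Db(\cA_t)$ is the derived Nakayama functor $\SSS = D\cA_t\otimes^{\mathsf{L}}_{\cA_t}(\blank)$, so that Serre duality reads $\Hom^\bullet(A,B)\cong\Hom^\bullet(B,\SSS A)^*$, naturally in $A$ and $B$. By the Yoneda lemma this gives a workable criterion: an object $A$ is $d$-CY exactly when $\Ext^j(A,N)\cong\Ext^{d-j}(N,A)^*$ holds naturally in $N$, for all $j\in\IZ$. I would verify both claims against this criterion.

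For $P(t)$ this is immediate: $P(t)$ is projective, so $\SSS(P(t))$ is just the underived Nakayama functor applied to $P(t)$, and that functor sends $P(j)\mapsto I(j)$; since $P(t)=I(t)$, we get $\SSS(P(t))\cong P(t)$. (Equivalently, directly from the rank definition: $\Hom^\bullet(P(t),N)=N_t$ while $\Hom^\bullet(N,P(t))^*=\Hom(N,I(t))^*\cong N_t$, naturally in $N$.)

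For the simples $S(i)$ with $1\le i\le t-1$, the crucial point is $\rk S(i)=\dim S(i)_t=0$. Recall from the proof of \hyref{Proposition}{prop:inequalities} the maximal symmetric subcomplex $\cS(M,N)\subset\cC(M,N)$: it differs from $\cC(M,N)$ only in degree $0$, where $\cC^0(M,N)/\cS^0(M,N)=\Hom(M_t,N_t)$, and it is self-dual, $\cS(M,N)\cong\cS(N,M)^*[-2]$. Since $S(i)_t=0$, placing $S(i)$ in either slot kills the discrepancy term, so $\cC(S(i),N)=\cS(S(i),N)$ and $\cC(N,S(i))=\cS(N,S(i))$. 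Together with \hyref{Proposition}{prop:ext-spaces} this yields, for all $j$,
\[ \Ext^j(S(i),N) = H^j\big(\cS(S(i),N)\big) \cong H^{2-j}\big(\cS(N,S(i))\big)^* = \Ext^{2-j}(N,S(i))^*, \]
naturally in $N$; by the criterion above, $S(i)$ is $2$-CY.

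The step needing the most care is the naturality in $N$ of the self-duality $\cS(M,N)\cong\cS(N,M)^*[-2]$ — this is what lets it pass to cohomology and feed the Yoneda argument behind the $d$-CY criterion; it comes down to the naturality of the trace pairings $\Hom(M_i,N_i)\times\Hom(N_i,M_i)\to\kk$ out of which that isomorphism is assembled slot by slot. I would also note that the projective resolution of $S(1)$ in \hyref{Lemma}{lem:resolutions} has a different shape from the others, but since the argument uses only $\rk S(1)=0$ it is uniform over $i\le t-1$. A more computational alternative for the $S(i)$ is to apply the Nakayama functor termwise to the projective resolution, obtaining the complex $[I(i)\to I(i-1)\oplus I(i+1)\to I(i)]$ in degrees $-2,-1,0$, and to identify this, via the self-duality $\cA_t\cong\cA_t\op$ of \hyref{Remark}{rem:symmetric}, with the $[2]$-shift of the injective coresolution of $S(i)$; that identification however requires tracing through the connecting maps, which the symmetric-complex argument avoids.
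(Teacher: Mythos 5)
Your argument is correct, and for the simples it takes a genuinely different route from the paper. The paper's proof is the computation you relegate to your final sentence: apply the Nakayama functor termwise to the projective resolutions of \hyref{Lemma}{lem:resolutions} and check by hand that the resulting complex $I(i)\to I(i-1)\oplus I(i+1)\to I(i)$ is the ($[2]$-shifted) injective coresolution of $S(i)$; your main argument instead exploits that $\rk S(i)=0$ forces $\cC(S(i),N)=\cS(S(i),N)$ and $\cC(N,S(i))=\cS(N,S(i))$, so the self-duality $\cS(M,N)\cong\cS(N,M)^*[-2]$ plus \hyref{Proposition}{prop:ext-spaces} yields $\Ext^j(S(i),N)\cong\Ext^{2-j}(N,S(i))^*$ naturally in $N$. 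The one point to tighten is the Yoneda step: you establish the natural isomorphism only for $N$ in $\lmod{\cA_t}$, whereas the criterion ``$\SSS A\cong A[d]$'' via Yoneda needs it on all of $\Db(\cA_t)$ (or a reduction). This is easily repaired: taking $N=\cA_t$ shows $\Hom(\cA_t,\SSS S(i)[m])\cong\Ext^{2+m}(\cA_t,S(i))$ vanishes unless $m=-2$, so $\SSS S(i)$ is a module shifted by $[2]$, and then Yoneda in the abelian category $\lmod{\cA_t}$ identifies that module with $S(i)$. What your route buys: it is uniform in $i$ (no case distinction for $S(1)$), it avoids computing injective coresolutions, and since it uses nothing about $S(i)$ beyond $\rk=0$, the identical argument shows $\SSS M\cong M[2]$ for \emph{every} rank-$0$ module $M$ --- a strictly stronger statement in the direction of the Remark following the proposition, which the paper explicitly says does not follow formally from its own proof. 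What the paper's route buys is brevity and no dependence on \hyref{Proposition}{prop:ext-spaces} or duality bookkeeping. The $P(t)$ case is handled identically in both.
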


\begin{proof}
There is the unique indecomposable projective-injective module $P(t)=I(t)$, which is thus fixed by the Nakayama functor: $\SSS P(t) = P(t)$.

The projective resolutions of the simple modules $S(1),\ldots,S(t-1)$ are given by \hyref{Lemma}{lem:resolutions}. Their injective resolutions are ---as is easily checked by hand--- $S(1) \to I(1) \to I(2) \to I(1)$ and $S(i) \to I(i) \to I(i-1) \oplus I(i+1) \to I(i)$ for $i=2,\ldots,t-1$. Hence the Nakayama functor $\SSS$ sends $S(i)$ to $S(i)[2]$, so that these simples are $2$-CY.
\end{proof}

\begin{remark}
It is actually true that the triangulated category $\Db_0(\cA_t)$ of rank 0 objects is a $2$-CY category, i.e.\ it has Serre functor $[2]$. For a geometric proof of this fact, see \cite{Hille-Ploog3}. It does not follow formally from the proposition. 
\end{remark}

\subsection{The modules $\Delta(i)$ and $\nabla(i)$}

For any $i\in\{1,\ldots,t\}$, we define the $\cA_t$-representations $\nabla(i)$ as follows: $\nabla(i)_j=0$ for $j<i$ and $\nabla(i)_j=\kk$ for $j\geq i$, with
  $\xymatrix@1@C=4em{\nabla(i)_j & \ar[l]|{\beta_j=1} \nabla(i)_{j+1}}$
for $i\leq j<t$, and all other maps zero.

The representation $\Delta(i)$ has the same vector spaces as $\nabla(t+1-i)$ on all vertices, but with $\alpha_j=1$ wherever possible.
See \hyref{Example}{ex:worms} for visualisations of these modules. As a mnemonic: $\dim \Delta(i) = \dim \nabla(i) = i$.

These modules form chains of injections $\Delta(1) \into \Delta(2) \into \cdots \into \Delta(t)$ and of surjections $\nabla(t) \onto \nabla(t-1) \onto \cdots \onto \nabla(1)$. We will see in \hyref{Section}{sec:sequences} that these are full exceptional sequences,
  $\Delta \coloneqq (\Delta(1),\ldots,\Delta(t))$ and
  $\nabla \coloneqq (\nabla(t),\ldots,\nabla(1))$.

\section{Exceptional and spherical modules} \label{sec:exceptionals}

\noindent
In this section we classify all exceptional and all $2$-spherical modules
over $\cA_t$. Recall that an object $M\in\Db(\cA_t)$ is called \emph{exceptional} if $\Hom^\bullet(M,M) = \kk$. And $M$ is called \emph{$e$-spherical} if $\SSS(M) \cong M[e]$ and $\Hom^\bullet(M,M) = \kk \oplus \kk[-e]$. Also recall $\rk M = \dim M_t$.

\begin{definition} A module $M$ is called \emph{thin} if $\udim(M)$ has entries only zero or one.
\end{definition}

\begin{theorem} Let $M$ be an $\cA_t$-module. \label{thm:module-classification}
  \begin{enumerate}
  \item $M$ is exceptional if and only if $M$ is indecomposable, thin and $\rk(M) = 1$.
  \item $M$ is $2$-spherical if and only if $M$ is indecomposable, thin and $\rk(M) = 0$.
  \end{enumerate}
\end{theorem}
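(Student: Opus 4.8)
The plan is to use the explicit complex $\cC(M,N)$ of \hyref{Proposition}{prop:ext-spaces} together with the inequalities of \hyref{Proposition}{prop:inequalities} and the positive-definite quadratic form of \hyref{Corollary}{cor:quadratic-form}. The two directions of each equivalence require rather different arguments, so I would organise the proof accordingly.

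\medskip\noindent\emph{The easy direction: thin indecomposables are exceptional or spherical.}
Suppose $M$ is indecomposable and thin. Because $M$ is indecomposable, $\End(M)$ is local; since $\dim\Hom(M,M)\ge1$ always, I want to show it equals $1$. Thinness makes $\cC^0(M,M)=\bigoplus_i\Hom(M_i,M_i)$ a sum of copies of $\kk$ (one for each $i$ with $M_i\ne0$), so $\hom(M,M)=\dim\ker(d^0)$ is bounded by the number of vertices in the support; I would argue that indecomposability forces the support of a thin module to be an interval, and that $d^0$ then has a one-dimensional kernel (the scalar endomorphisms), giving $\hom(M,M)=1$. Next, $q(M)=\hom(M,M)-\ext^1(M,M)+\ext^2(M,M)$; by \hyref{Corollary}{cor:quadratic-form} the value $q(\udim M)$ on a thin interval dimension vector is easily computed — it is $1$ if $\rk M=1$ (the term $d_1^2$ or a boundary square survives) and $0$ if $\rk M=0$. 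Combining $\hom(M,M)=1$ with \hyref{Proposition}{prop:inequalities}(1), $\hom(M,M)\ge\ext^2(M,M)$, forces $\ext^2(M,M)\in\{0,1\}$, and part (2) of that proposition pins down which case occurs by the rank. A short bookkeeping argument with $q(M)$ then yields $\ext^1(M,M)=0$ in both cases, so $\Hom^\bullet(M,M)=\kk$ when $\rk M=1$ and $=\kk\oplus\kk[-2]$ when $\rk M=0$; in the latter case \hyref{Proposition}{prop:CY-objects}-style reasoning (or direct inspection of $\SSS M$ for rank-$0$ modules) gives $\SSS M\cong M[2]$, so $M$ is $2$-spherical.

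\medskip\noindent\emph{The hard direction: exceptional or spherical modules are thin of the right rank.}
Let $M$ be an $\cA_t$-module (a genuine module, not a complex) that is exceptional or $2$-spherical. First, $M$ is indecomposable: $\Hom^\bullet(M,M)$ has a $1$-dimensional degree-$0$ part, so $\End(M)=\kk$, which rules out non-trivial idempotents. The real work is showing $M$ is thin. Here I expect the main obstacle. The idea is to use \hyref{Corollary}{cor:quadratic-form}: for an exceptional module $q(M)=\hom-\ext^1+\ext^2=1-0+\ext^2(M,M)\ge1$, and for a $2$-spherical module $q(M)=1-0+1-\ldots$; combined with \hyref{Proposition}{prop:inequalities}(2), $\ext^2(M,M)=0$ (as $\rk M$ is then forced to be the rank making equality hold — exceptional needs $\ext^2(M,M)=0$ hence $\rk M\ne0$ cannot give equality unless… ) — so $q(M)=1$ for exceptional and $q(M)=2$ for $2$-spherical. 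But $q(d_1,\ldots,d_t)=d_1^2+\sum(d_i-d_{i+1})^2$ is a sum of squares, so $q(M)\le 2$ severely restricts the dimension vector: all consecutive differences $d_i-d_{i+1}$ and $d_1$ lie in $\{-1,0,1\}$ with at most two (resp.\ one) of the squares nonzero. A small case analysis of such dimension vectors shows each has all entries in $\{0,1\}$ — i.e.\ $M$ is thin — and that $\rk M=d_t=1$ in the exceptional case and $d_t=0$ in the spherical case. The subtlety I would watch out for is that $q$ controls only the dimension vector, not $M$ itself; one still needs that the specific thin dimension vectors arising actually support an indecomposable module with the claimed Hom-complex, but that is exactly what the easy direction established, closing the loop. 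I would also need to rule out that a non-thin indecomposable could sneak in with $\ext^2(M,M)>0$ compensating; this is handled because \hyref{Proposition}{prop:inequalities}(1)–(2) force $\ext^2(M,M)=0$ once $\hom(M,M)=1$ and we want equality in $q$ to stay $\le 2$ — any rank-$1$ module with $\ext^2(M,M)\ge1$ would need $\hom(M,M)\ge2$ by part (3), contradicting exceptionality, and a rank-$0$ module has $\ext^2(M,M)=0$ by part (2) directly.

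\medskip\noindent\emph{Summary of the plan.}
In brief: (i) observe $\End(M)=\kk$ gives indecomposability in both non-trivial directions; (ii) use \hyref{Proposition}{prop:inequalities} to control $\ext^2(M,M)$ in terms of the rank, hence compute $q(M)\in\{1,2\}$; (iii) use positive-definiteness of $q$ (\hyref{Corollary}{cor:quadratic-form}) to conclude the dimension vector is thin with the prescribed value of $\rk M=d_t$; (iv) conversely, for thin indecomposables, compute $\hom(M,M)=1$ directly from the complex $\cC(M,M)$ and read off $\ext^1,\ext^2$ from $q$ and \hyref{Proposition}{prop:inequalities}, invoking the rank-$0$ Serre-functor computation for the spherical case. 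The delicate point throughout is the passage from numerical data ($q$, ranks, Euler characteristics) to the structural statement "thin", which rests entirely on the sum-of-squares shape of $q$.
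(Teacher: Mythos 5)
Your overall strategy (positive-definite quadratic form plus the $\Hom$/$\Ext^2$ inequalities) is the same as the paper's, but there are two genuine gaps and one arithmetic slip. The most serious gap is in the hard direction of part (2): you claim that $q(M)=2$ together with "a small case analysis" shows all entries of $\udim(M)$ lie in $\{0,1\}$ and $d_t=0$. This is false as stated: the sum-of-squares form $q(d)=d_1^2+\sum_i(d_i-d_{i+1})^2$ also takes the value $2$ on non-thin vectors such as $(1,\ldots,1,2,\ldots,2)$ or $(0,\ldots,0,1,\ldots,1,2,\ldots,2)$, which moreover have $d_t=2$. One must first establish $\rk M=0$ for a $2$-spherical $M$ and only then combine this with $q(M)=2$ to force the thin interval shape. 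The paper does this via Serre duality, using that $M$ is $2$-CY and $P(t)$ is $0$-CY (\hyref{Proposition}{prop:CY-objects}); alternatively you could get it from \hyref{Proposition}{prop:inequalities}(2), since $\hom(M,M)=\ext^2(M,M)=1$ forces $\rk M=0$ — but your text does not actually make either argument (indeed you assert "a rank-$0$ module has $\ext^2(M,M)=0$ by part (2) directly," which is backwards: part (2) gives $\ext^2(M,M)=\hom(M,M)=1$ in that case).

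The second gap is in the easy direction of part (2): being $2$-spherical requires $\SSS M\cong M[2]$ in addition to $\Hom^\bullet(M,M)=\kk\oplus\kk[-2]$, and this is not a numerical consequence of $q$ or the inequalities. You wave at "\hyref{Proposition}{prop:CY-objects}-style reasoning," but an actual argument is needed; the paper proves it by induction, writing a thin rank-$0$ indecomposable as an essentially unique extension $0\to M'\to M\to S(i)\to 0$ with $M'$ and $S(i)$ already $2$-CY and applying the autoequivalence $\SSS$ to that extension. Finally, the slip: for a thin indecomposable of rank $0$ the quadratic form is $2$, not $0$ (one square for entering the support and one for leaving it); your own conclusion $\Hom^\bullet(M,M)=\kk\oplus\kk[-2]$ requires $q(M)=1-0+1=2$, so the value $0$ you wrote is inconsistent with the rest of your bookkeeping. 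With these three points repaired the proof goes through along the lines you describe.
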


\begin{remark}
If $t=2$, then the module $P(2)$ is 0-spherical of rank 2.
\end{remark}

\begin{proof}
(1) Let $M$ be an exceptional module, with dimension vector $\underline d \coloneqq \dim(M)$. Its quadratic form is $q(\underline d) = 1$, and by \hyref{Corollary}{cor:quadratic-form} this happens precisely if $d_1 = d_2 = \ldots = d_a = 0$ and $d_{a+1} = \ldots = d_t = 1$. Hence $M$ is thin of rank 1.

Conversely, let $M$ be an indecomposable, thin module of rank 1. Computing endomorphisms of $M$ as a representation yields $\End(M)=\kk$. Now \hyref{Proposition}{prop:inequalities} implies $\ext^2(M,M) < \hom(M,M)=1$, by the assumption of $\rk(M)=1$. We finally get $\ext^1(M,M)=0$ from $\chi(M,M)=q(M)=1$, as $\udim(M)$ is of type $(0,\ldots,0,1\ldots,1)$; possibly without leading zeroes.

(2) Let now $M$ be a $2$-spherical module. We see $\rk(M)=0$ from
\[ \Hom(P(t),M) = \Hom(M[-2],P(t))^* = \Hom(P(t),M[-2]) = \Ext^{-2}(P(t),M) = 0 \]
where we have used Serre duality twice; first for the $2$-CY object $M$ and then for the 0-CY object $P(t)$; see \hyref{Proposition}{prop:CY-objects}. Moreover, we have $q(M)=2$ which, together with $\rk(M)=0$, forces
  $\dim(M) = (0,\ldots,0,1,\ldots,1,0,\ldots,0)$;
possibly without leading zeroes. In particular, $M$ is thin of rank 0.

Conversely, if $M$ is an indecomposable, thin module of rank 0, then we again find $\End(M)=\kk$ from checking represention endomorphisms. \hyref{Proposition}{prop:inequalities} now gives $\ext^2(M,M) = \hom(M,M) = 1$, and then $q(M)=2$ gets us $\Ext^1(M,M)=0$.
It remains to show that $M$ is $2$-CY. By \hyref{Proposition}{prop:CY-objects}, the simple rank 0 modules $S(1),\ldots,S(t-1)$ are $2$-CY. Any simple, rank 0 module $M$ is a consecutive extension of $S(1),\ldots,S(t-1)$. Therefore, we can assume that $M$ occurs in an extension $0 \to M' \to M \to S(i) \to 0$ with $\SSS(M') \cong M'[2]$. Moreover, because $M$ is thin, $\ext^1(S(i),M')=1$. Hence, applying the autoequivalence $\SSS$ to the essentially unique (up to scalars) extension leads to $\SSS(M)\cong M[2]$.
\end{proof}

\begin{corollary} \label{cor:count}
  An $\cA_t$-module is a brick if and only if it is indecomposable and thin.
  Moreover, over $\cA_t$ there are $2^t-1$ exceptional modules, $2^{t+1}-t-2$ bricks, and the number
  of $2$-spherical modules is $2^t-t-1$.
\end{corollary}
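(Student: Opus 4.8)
The statement to prove is \hyref{Corollary}{cor:count}, which has two parts: (a) the characterisation of bricks as indecomposable thin modules, and (b) three enumerations. I would organise the argument as follows.

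\textbf{Step 1: bricks = indecomposable thin modules.} One inclusion is immediate from \hyref{Theorem}{thm:module-classification}: every exceptional module (indecomposable, thin, rank $1$) is a brick, and every $2$-spherical module (indecomposable, thin, rank $0$) is a brick, since in both cases we showed $\End(M)=\kk$. So any indecomposable thin $M$ is a brick. Conversely, suppose $M$ is a brick; I need $M$ indecomposable (immediate, as $\Hom(M,M)=\kk$ has no idempotents $\neq 0,1$) and thin. For thinness I would argue via the quadratic form: from $\Hom(M,M)=\kk$ and \hyref{Proposition}{prop:inequalities}(1)--(2) one gets $\ext^2(M,M) \leq 1$ with equality iff $\rk M = 0$; combined with $q(M) = \chi(M,M) = 1 - \ext^1(M,M) + \ext^2(M,M)$ and positive-definiteness of $q$ (\hyref{Corollary}{cor:quadratic-form}), we get $q(M) \leq 2$. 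Since $q(d_1,\ldots,d_t) = d_1^2 + \sum_{i=1}^{t-1}(d_i-d_{i+1})^2$ takes value $\leq 2$ only on vectors whose entries are a (possibly empty) block of zeros followed by a block of ones followed by a possibly empty block of zeros (value $1$ or $2$) — exactly the dimension vectors already identified in the proof of \hyref{Theorem}{thm:module-classification} — the module $M$ is thin. (One must also note $q(M)\geq 1$ since $M\neq0$, and rule out the case $q(M)=2$, $\rk M = 1$: here $\ext^2 \leq \hom = 1$ would need to be checked; but \hyref{Proposition}{prop:inequalities}(2) says equality $\hom=\ext^2$ holds only when a rank vanishes, so with $\rk M = 1 = \rk M$ we get $\ext^2(M,M)=0$, forcing $\ext^1(M,M)=1\neq 0$, contradicting nothing yet — so actually I should be careful and instead just observe that any dimension vector with $q=2$ and all-zero-or-one entries supported on two disjoint blocks would give a decomposable representation, contradicting brickness. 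The clean route: $q(M)\le 2$ forces $\udim M$ to be $0$-$1$-valued and, up to the form, either one block of $1$s, or two separated blocks; the two-block case cannot support an indecomposable module, so $M$ is thin with a single block.)

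\textbf{Step 2: counting.} Once bricks are exactly the indecomposable thin modules, I count dimension vectors of single-block type $(0,\ldots,0,1,\ldots,1,0,\ldots,0)$ and check each supports a unique indecomposable (hence brick). A block is determined by a pair $1\le a\le b\le t$ of endpoints, giving $\binom{t+1}{2}$ choices; for each such interval there is exactly one indecomposable representation (the interval module for this string algebra), so there are $\binom{t+1}{2} = \tfrac{t(t+1)}{2}$ bricks — but this does not match $2^{t+1}-t-2$, so the single-block count is wrong: I must recount. The exceptional modules correspond to blocks of the form $(0,\ldots,0,1,\ldots,1)$ reaching the last vertex $t$ (rank $1$): choosing the left endpoint $a\in\{1,\ldots,t\}$ gives $t$ such, but \hyref{Theorem}{thm:module-classification}(1) also allows any indecomposable thin rank-$1$ module, and thin rank-$1$ indecomposables need not be intervals — here I would invoke the classification of indecomposable thin $\cA_t$-modules directly (worm diagrams / string combinatorics), getting $2^t-1$ of them, and $2^t-1-t$ for the rank-$0$ ones by \hyref{Theorem}{thm:module-classification}(2). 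The brick total is then $(2^t-1) + (2^t-1-t) = 2^{t+1}-t-2$, and the $2$-spherical count $2^t - t - 1$ is just restating \hyref{Theorem}{thm:module-classification}(2) together with the earlier enumeration of thin indecomposables.

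\textbf{Main obstacle.} The genuine content — and the thing I would need to import rather than re-derive — is the count $2^t-1$ of indecomposable thin $\cA_t$-modules of rank $1$ (equivalently, exceptional modules), together with the fact that these are \emph{not} merely interval modules but are governed by the worm/string combinatorics of the commutativity-plus-zero relations. The enumeration $\binom{t+1}{2}$ I sketched above is the naive guess and is wrong; the correct enumeration uses that $\cA_t$ is a special biserial (string) algebra, so indecomposables with $0$-$1$ dimension vector correspond to strings, and counting strings with the given relations yields $2^t-1$. Assuming the classification of thin indecomposables (which the paper develops via worm diagrams), the corollary is a short bookkeeping exercise: add the exceptional and $2$-spherical counts for the brick total, and subtract $t$ (the simples $S(t)$ and the non-rank-$0$ simples, or rather the $t$ rank-$1$ simples... ) — more precisely, $2^t-1$ rank-$1$ thin indecomposables minus the $t$ that have rank exactly... no: of the $2^t-1$ thin indecomposables, exactly $t$ are the $S(i)$-type with the block reaching $t$ are rank $1$ — I would pin down the exact bijection when writing the full proof, but the arithmetic $2^t-1-t$ for rank-$0$ thin indecomposables is forced by subtracting the $t$ rank-$1$ ones that "use" vertex $t$ from the total $2^t-1$, and this is the only step where I must be careful about off-by-$t$ errors.
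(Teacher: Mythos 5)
Your overall strategy --- bounding $q(M)\le 2$ for a brick via \hyref{Proposition}{prop:inequalities}, using positive definiteness of $q$ to pin down the dimension vector, and counting thin indecomposables by the $\alpha/\beta$ choice at each edge --- is exactly the paper's route. However, there is a genuine error in your converse direction of Step 1. The assertion that ``$q(M)\le 2$ forces $\udim M$ to be $0$-$1$-valued'' is false: the vector $(0,\dots,0,1,\dots,1,2,\dots,2)$ has $q=2$ (for $t=2$ this is $\udim P(2)=(1,2)$). Reading $q(d_1,\dots,d_t)=d_1^2+\sum_i(d_i-d_{i+1})^2$ as the total squared step length of the walk $0,d_1,\dots,d_t$, the value $2$ permits two \emph{upward} unit steps, which is not thin; the ``two separated blocks'' configuration you worry about instead has $q\ge 3$ and never arises. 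What excludes the up-up case is precisely the rank argument you started and then abandoned: if $q(M)=2$ and $\rk M\neq 0$, then $\ext^2(M,M)<\hom(M,M)=1$ by \hyref{Proposition}{prop:inequalities} (1) and (2), so $\ext^2(M,M)=0$ and $q(M)=1-\ext^1(M,M)\le 1$, a contradiction. (Your parenthetical computes $\ext^1(M,M)=1$ at this point; the correct value is $-1$, which is the contradiction you were looking for, so the case closes cleanly.) Hence $q(M)=2$ forces $\rk M=\dim M_t=0$, the walk returns to $0$, and only then is $\udim M=(0,\dots,0,1,\dots,1,0,\dots,0)$ thin.

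On the counting, the essential input you correctly identify is that a thin indecomposable is determined by its support interval together with an $\alpha/\beta$ choice at each internal edge ($2^{i-1}$ modules per interval of length $i$; this is the worm encoding of \hyref{Subsection}{sub:worms}), giving $2^t-1$ exceptionals. But your derivation of $2^t-t-1$ by ``subtracting $t$ from $2^t-1$'' is a numerical coincidence rather than an argument, and your brick total $(2^t-1)+(2^t-1-t)$ presupposes the spherical count you are trying to establish. The non-circular bookkeeping, which is the paper's, is: a brick supported on $[a,b]$ is exactly an exceptional $\cA_b$-module, so bricks number $\sum_{b=1}^{t}(2^b-1)=2^{t+1}-t-2$, and the $2$-spherical modules are the bricks that are not exceptional, numbering $(2^{t+1}-t-2)-(2^t-1)=2^t-t-1$.
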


\begin{proof}
  $(\Longleftarrow)$ By the theorem, indecomposable thin modules are either 2-spherial (rank 0) or exceptional (rank 1), and therefore bricks.

  $(\Longrightarrow)$ We have
$2 = 2\hom(M,M) \geq \hom(M,M)+\ext^2(M,M) = q(M) \geq 1$
for a brick $M$, using \hyref{Proposition}{prop:inequalities} (1). If $q(M)=1$, then $M$ is thin by \hyref{Corollary}{cor:quadratic-form}. Otherwise, $q(M)=2$ and $\hom(M,M)=\ext^2(M,M)$, hence $\rk(M)=0$ by \hyref{Proposition}{prop:inequalities} (2), and again $M$ has to be thin.

  The dimension vector of an exceptional module is $(0,\ldots,0,1,\ldots,1)$, possibly with no leading zeroes. For each map in the representation, there are two choices, $\alpha$ or $\beta$. Thus there are $2^{i-1}$ exceptional modules of dimension $i$. Altoghether, there are $\sum_{i=1}^t 2^{i-1} = 2^t-1$ exceptional modules.

  Any brick, i.e.\ an indecomposable thin module, has the dimension vector of some exceptional $\cA_l$-module, for an $l\in\{1,\ldots,t\}$. Hence their number is given by the sum $\sum_{l=1}^t(2^l-1) = 2^{t+1}-t-2$.
  Finally, the number of $2$-spherical modules is the difference $(2^{t+1}-t-2)-(2^t-1) = 2^t -t-1$.
\end{proof}

The spherical twist functors $\smash{\TTT_{S(i)}}$ associated to the 2-spherical simple modules $S(1),\ldots,S(t-1)$ generate a subgroup $\braid{t} \coloneqq \langle \TTT_{S(1)},\ldots,\TTT_{S(t-1)}\rangle \subset \Aut(\Db(\cA_t))$. This subgroup is isomorphic to the braid group on $t$ strands; see \hyref{Appendix}{app:spherical}.

\begin{corollary}
  Let $M$ be a $2$-spherical $\cA_t$-module. Then there exists $\beta\in\braid{t}$ with $\beta(M) = S(1)$.
\end{corollary}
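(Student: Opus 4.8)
The plan is to induct on the dimension $d = \dim M \in \{1,\ldots,t-1\}$ of the $2$-spherical module $M$, using the twist functors $\TTT_{S(1)},\ldots,\TTT_{S(t-1)}$ to strictly decrease $d$ until we reach a simple module, and then to move between simples. By \hyref{Theorem}{thm:module-classification}(2), $M$ is indecomposable, thin, of rank $0$; so (as in the proof of \hyref{Corollary}{cor:count}) its dimension vector is supported on a consecutive block $\{a+1,\ldots,a+d\}$ with $1\le a+1$ and $a+d\le t-1$ (rank $0$ forces $a+d < t$). The base case $d=1$ is a simple module $S(j)$ with $j\le t-1$; here one checks directly that an adjacent twist $\TTT_{S(j\pm1)}$ sends $S(j)$ to $S(j\mp1)$ — this is the standard spherical-twist computation in an $A$-type configuration, using $\Hom^\bullet(S(j),S(j\pm1)) = \kk[-1]$ from \hyref{Proposition}{prop:ext-spaces} — so by a chain of such twists we reach $S(1)$ from any $S(j)$, $j \le t-1$.

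For the inductive step, let $M$ have support block $\{a+1,\ldots,a+d\}$ with $d\ge 2$. I would apply the twist $\TTT_{S(a+d)}$ along the simple at the top of the support (or $\TTT_{S(a+1)}$ at the bottom). The defining triangle of the spherical twist reads
\[
\Hom^\bullet(S(a+d),M)\otimes S(a+d) \too M \too \TTT_{S(a+d)}(M) \too,
\]
and since $M$ is thin with $S(a+d)$ appearing once as a composition factor at the ``end'' of the chain, one computes $\Hom^\bullet(S(a+d),M) = \kk$ (there is a unique sub or quotient copy of $S(a+d)$ and no higher Ext, using the explicit complex $\cC(S(a+d),M)$ from \hyref{Proposition}{prop:ext-spaces}). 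Hence $\TTT_{S(a+d)}(M)$ is the cone of $S(a+d)\to M$, which — because $M$ is thin and $S(a+d)$ sits at the end of its support — is again an honest module, namely the thin indecomposable with support $\{a+1,\ldots,a+d-1\}$, still of rank $0$, hence $2$-spherical of dimension $d-1$. (One must choose the correct endpoint: twisting at whichever end of the support the relevant short exact sequence $0\to S(a+d)\to M\to M'\to 0$ or $0\to M'\to M\to S(a+1)\to 0$ exists; thinness guarantees one of them does, and \hyref{Proposition}{prop:inequalities} together with $\ext^1=0$ pins down the $\Hom^\bullet$ dimension.) By induction there is $\beta'\in\braid{t}$ with $\beta'(M') = S(1)$, and then $\beta = \beta'\circ\TTT_{S(a+d)}$ works.

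The main obstacle is the bookkeeping in the inductive step: verifying that the cone of the canonical map $S(a+d)\to M$ is concentrated in a single degree and is precisely the expected smaller thin module, rather than a genuine complex or a module with the wrong dimension vector. This requires care about which of $S(a+1)$, $S(a+d)$ admits the needed (co)extension with $M$ — equivalently, whether the arrows at the relevant endpoint of the worm diagram of $M$ point ``in'' or ``out'' — and one has to check that twisting at the wrong end would increase the dimension instead. Everything else is either the positive-definiteness input already recorded in \hyref{Corollary}{cor:quadratic-form} and \hyref{Theorem}{thm:module-classification}, or the routine triangle computation for $\TTT_{S(j)}$; the only genuinely new work is confirming the module-theoretic shape of the cone, which is a finite check on thin representations of the $\cA_t$-quiver.
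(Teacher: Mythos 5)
Your overall strategy coincides with the paper's: peel simple modules off an end of the support of $M$ by spherical twists until a simple remains, then move that simple to $S(1)$ inside $\braid{t}$. However, two steps are wrong as written. In the inductive step, your dichotomy fails: it is not true that one of $0\to S(a+d)\to M\to M'\to 0$ or $0\to M'\to M\to S(a+1)\to 0$ must exist. For a thin worm, $S(a+d)$ is a submodule iff the last arrow is $\alpha$, and $S(a+1)$ is a quotient iff the first arrow is $\alpha$; for an all-$\beta$ worm (e.g.\ $\kk\xxfrom{\beta}\kk$ on vertices $\{1,2\}$) neither holds --- there $S(1)$ is a sub and $S(2)$ is a quotient, exactly the two sequences you did not list. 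Worse, when the endpoint simple $S(j)$ is a quotient of $M$ rather than a sub, the direct twist $\TTT_{S(j)}(M)$ is not a module: $\Ext^2(S(j),M)\cong\Hom(M,S(j))^*\neq0$, so $H^1(\TTT_{S(j)}(M))\neq 0$ by \hyref{Lemma}{lem:module-twist}, and your computation $\Hom^\bullet(S(j),M)=\kk$ (``no higher Ext'') breaks down. The repair is to use the inverse twist $\TTT_{S(j)}^{-1}$ in the quotient case, which is legitimate since $\braid{t}$ contains inverses; this is precisely the case distinction the paper makes at the bottom endpoint, applying $\TTT_{S(i)}$ when the first arrow is $\beta$ and $\TTT_{S(i)}^{-1}$ when it is $\alpha$.

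Your base case is also false: a single adjacent twist does not send $S(j)$ to $S(j\mp1)$. Since $\Hom^\bullet(S(j-1),S(j))=\kk[-1]$, the defining triangle gives $\TTT_{S(j-1)}(S(j))=\smextension{S(j-1)}{S(j)}$, the two-dimensional extension module, not a simple. Walking a simple along the chain requires a composition of twists, and one must also track shifts, since $\TTT_{S(i)}(S(i))=S(i)[-1]$. The paper does this via $\TTT_{S(i)}(S(i+1))=\smextension{S(i)}{S(i+1)}\eqqcolon S^{(i)}$ followed by $\TTT_{S^{(i)}}(S(i+1))=S(i)[-1]$, where $\TTT_{S^{(i)}}=\TTT_{S(i)}\TTT_{S(i+1)}\TTT_{S(i)}^{-1}$ lies in $\braid{t}$. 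Both gaps are local and fixable along these lines, but as written the induction does not go through for worms whose endpoint arrows point the wrong way, and the final reduction to $S(1)$ is not established.
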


\begin{proof}
  By the theorem, $M$ is a thin module. Let $i\geq1$ be minimal with $M_i\neq0$, and $j<t$ maximal with $M_j\neq0$. If $i\neq j$, consider two cases:

  (i) $M_i = \kk \xxto{\alpha} \kk = M_{i+1}$: Then $\Hom(M,S(i))=\kk$, hence $\Ext^2(M,S(i)) = \Hom(S(i),M)^* = 0$, and finally $\Ext^1(M,S(i))=0$ from $\chi(M,S(i))=1$. Put $M' \coloneqq \smash{\TTT_{S(i)}^{\,-1}}(M)$. Plugging $\Hom^\bullet(M,S(i))=\kk$ into the triangle describing the inverse spherical twist functor (see \hyref{Appendix}{app:spherical}), we obtain a short exact sequence $0 \to M' \to M \to S(i) \to 0$.

  (ii) $M_i = \kk \xxfrom{\beta} \kk = M_{i+1}$: Then $\Hom(S(i),M)=\kk$, and $\Ext^1(S(i),M)=\Ext^2(S(i),M))=0$ as above. Put $M' \coloneqq \TTT_{S(i)}(M)$. The spherical twist triangle reduces to the short exact sequence $0 \to S(i) \to M \to M' \to 0$.

  In both cases, $M'$ is a 2-spherical module supported on vertices $i+1,\ldots,j$. Repeating this process, we see that a combination of spherical twists along $S(i)$ together with their inverses send $M$ to a simple module. In the final step, we move this simple via $\braid{t}$ to $S(1)$, using $\TTT_{S(i)}(S(i)) = S(i)[-1]$ and
  \[ \TTT_{S(i)}(S(i+1)) = \smextension{S(i)}{S(i+1)} \eqqcolon S^{(i)} \qquad\text{and}\qquad
     \TTT_{S^{(i)}}(S(i+1)) = S(i)[-1] \, . \qedhere \]
\end{proof}

\begin{lemma} \label{lem:exceptional-pairs}
If $(M,N)$ is an exceptional pair of $\cA_t$-modules, then $\Ext^2(M,N) = 0$ and $\hom(M,N) \leq 1$.  
\end{lemma}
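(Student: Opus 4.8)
The plan is to exploit the two inequalities from \hyref{Proposition}{prop:inequalities} together with the classification of exceptional modules in \hyref{Theorem}{thm:module-classification}. First I would record that, since $(M,N)$ is an exceptional pair, both $M$ and $N$ are exceptional modules; by \hyref{Theorem}{thm:module-classification}(1) each of them is indecomposable, thin, and of rank $1$. In particular $\rk M = \rk N = 1$, so neither rank vanishes.

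For the vanishing $\Ext^2(M,N) = 0$: by definition of an exceptional pair we have $\Hom^\bullet(N,M) = 0$, hence in particular $\hom(N,M) = 0$. Now apply \hyref{Proposition}{prop:inequalities}(1) with the roles of the two modules interchanged, i.e.\ the inequality $\hom(N,M) \geq \ext^2(M,N)$. Since the left-hand side is zero and $\ext^2(M,N) \geq 0$, we conclude $\ext^2(M,N) = 0$. (Alternatively one can cite \hyref{Proposition}{prop:inequalities}(2): since $\rk M = \rk N = 1$, the inequality in (1) is strict, so $\ext^2(M,N) < \hom(N,M) = 0$, forcing $\ext^2(M,N) = 0$ — but the direct argument already suffices.)

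For the bound $\hom(M,N) \leq 1$: suppose for contradiction that $\hom(M,N) \geq 2$. Since $\rk M = \rk N = 1$, \hyref{Proposition}{prop:inequalities}(3) applies and yields $\ext^2(N,M) \geq 1$. But $\Ext^2(N,M) = 0$ by Serre duality / the computation just performed applied with $M$ and $N$ swapped — more precisely, $\hom(M,N) \geq 2 > 0$ is consistent, so instead use that the pair being exceptional gives $\Hom^\bullet(N,M) = 0$; combined with part (3) applied to the pair $(N,M)$ this is the clean route. Actually the cleanest phrasing: since $(M,N)$ is exceptional, $\hom(N,M) = 0$; by \hyref{Proposition}{prop:inequalities}(3) applied to the pair with $M \coloneqq N$, $N \coloneqq M$, if $\hom(N,M)$ were $\geq 2$ we would get $\ext^2(M,N) \geq 1$, a contradiction — but $\hom(N,M) = 0$, so (3) gives nothing there. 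The correct contradiction is obtained by applying (3) directly to $(M,N)$: $\hom(M,N) \geq 2$ forces $\ext^2(N,M) \geq 1$, contradicting $\ext^2(N,M) = 0$, which follows exactly as in the previous paragraph from $\hom(M,N)$... no: from $\Hom^\bullet(N,M) = 0$ giving $\hom(M,N)$...

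Let me restate. The contradiction runs: assume $\hom(M,N) \geq 2$; by (3), $\ext^2(N,M) \geq 1$; but by (1) applied as $\hom(M,N) \geq \ext^2(N,M)$, and since we also need $\ext^2(N,M) = 0$ — this we get from $\hom(M,N)$... The genuinely available vanishing is $\Ext^2(N,M) = 0$, proved by the same argument as in paragraph two with $M,N$ interchanged, using $\hom(M,N) \geq 2 > 0$ — no, using $\Hom^\bullet(M,N)$ which need \emph{not} vanish. The honest input is $\Hom^\bullet(N,M) = 0$, hence $\hom(N,M) = 0$, hence by (1), $\ext^2(M,N) = 0$ (paragraph two) — and \emph{also}, since $M$ exceptional means $\Hom^\bullet(M,M) = \kk$, etc. I expect the main obstacle is precisely this bookkeeping of which of the two $\Ext^2$'s vanishes: one ($\Ext^2(M,N)$) vanishes from the pair condition via (1), and to kill $\hom(M,N) \geq 2$ one wants $\Ext^2(N,M) = 0$, which must be extracted differently — e.g.\ from $\ext^2(N,M) \leq \hom(M,N)$ together with a parity or Euler-characteristic constraint $\chi(M,N) = 0$ forced by exceptionality of the pair, or from \hyref{Proposition}{prop:inequalities}(2) noting $\hom(M,N) = \ext^2(N,M)$ would need rank zero. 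I would resolve this by combining (1), (3) and the Euler pairing: exceptionality of the pair gives $\chi(M,N) = \hom(M,N) - \ext^1(M,N) + \ext^2(M,N) = \hom(M,N) - \ext^1(M,N)$, and a direct inspection of thin rank-$1$ dimension vectors of type $(0,\dots,0,1,\dots,1)$ via \hyref{Lemma}{lem:euler-pairing} shows $\chi(M,N) \in \{0,1\}$, which pins $\hom(M,N) \leq 1$.
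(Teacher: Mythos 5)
Your first half is correct and coincides with the paper's argument: $\ext^2(M,N) \leq \hom(N,M) = 0$ by \hyref{Proposition}{prop:inequalities}(1), since the pair condition kills $\Hom(N,M)$. (The parenthetical alternative via part (2) is garbled --- ``$\ext^2(M,N) < \hom(N,M) = 0$, forcing $\ext^2(M,N)=0$'' is not an argument --- but you rightly discard it in favour of the direct one.)

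The second half has a genuine gap. You correctly isolate the decisive step: with $\rk M = \rk N = 1$ (from \hyref{Theorem}{thm:module-classification}), the assumption $\hom(M,N)\geq 2$ yields $\ext^2(N,M)\geq 1$ by \hyref{Proposition}{prop:inequalities}(3). What you then fail to see is that $\Ext^2(N,M)=0$ needs no derivation at all: the exceptional-pair condition is $\Hom^\bullet(N,M)=0$, i.e.\ the vanishing of \emph{every} $\Ext^i(N,M)$, not merely of $\Hom(N,M)$. That is exactly the paper's remark that $\ext^2(N,M)\geq 1$ would be ``an extension in the wrong direction,'' and it closes the contradiction in one line. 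Because you miss this, you retreat to an Euler-pairing fallback, and that fallback is a non sequitur: from $\ext^2(M,N)=0$ and $\chi(M,N)\in\{0,1\}$ you only obtain $\hom(M,N)=\chi(M,N)+\ext^1(M,N)$, which gives no bound on $\hom(M,N)$ without an a priori bound on $\ext^1(M,N)$ --- and no such bound is available at this stage. So as written your proof of $\hom(M,N)\leq 1$ is incomplete, though the missing observation is immediate from the definition of an exceptional pair.
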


\begin{proof}
The first claim follows immediately from the inequality \hyref{Proposition}{prop:inequalities} (1): $\ext^2(M,N) \leq \hom(N,M) = 0$.

For the second statement, $\hom(M,N)\geq2$ would lead to $\ext^2(N,M)\geq1$ by \hyref{Proposition}{prop:inequalities} (3), i.e.\ an extension in the wrong direction. Here we have used that exceptional modules have rank 1 by \hyref{Theorem}{thm:module-classification}.
\end{proof}

\subsection{Representing thin modules by worms} \label{sub:worms}
A non-zero, indecomposable thin representation of $\cA_t$ is a sequence of maps
 $\kk \smash{\xxto{\alpha}} \kk$ and $\kk \smash{\xxfrom{\beta}} \kk$.
Therefore, it is uniquely encoded by a word in the letters $\alpha$ and $\beta$, together with the last index of a non-zero vector space in the representation.
For exceptional modules, the encoding is particularly simple, since the index of the last non-zero vector space is always $t$.

We will depict these modules using the following convention: we read the word in the letters $\alpha$ and $\beta$ from left to right, and $\alpha$ is drawn as a line going right and $\beta$ is drawn as a line going up. This we call a \emph{worm}.

\begin{example} \label{ex:worms}
The seven exceptional $\cA_3$-modules, as representations and worms:
\[ \begin{array}{ *{2}{r @{~[} c@{}c@{}c@{}c@{}c @{]} @{\hspace{1em}} l} }
   S(3) = \nabla(1) = \Delta(1) = &   0 &      &   0 &      & \kk & \worm{\draw (0,0) node {};}
\\[1.5ex]
   \Delta(2) = &   0 &      & \kk & \xra & \kk & \worm{\draw (0,0) node {} -- (1,0) node {};} 
&  \hspace{2.5em}
   \nabla(3) = & \kk & \xlb & \kk & \xlb & \kk & \smash{\worm{\draw (0,0) node {} -- (0,1) node {} -- (0,2) node {};}}
\\[1ex]
  \Delta(3) = & \kk & \xra & \kk & \xra & \kk & \worm{\draw (0,0) node {} -- (1,0) node {} -- (2,0) node {};}
&  \nabla(2) = &   0 &      & \kk & \xlb & \kk & \worm{\draw (0,0) node {} -- (0,1) node {};}
\\[1ex]
               & \kk & \xra & \kk & \xlb & \kk & \worm{\draw (0,0) node {} -- (1,0) node {} -- (1,1) node {};} 
&              & \kk & \xlb & \kk & \xra & \kk & \worm{\draw (0,0) node {} -- (0,1) node {} -- (1,1) node {};}
\end{array} \]
\end{example}

\section{Full exceptional sequences of modules} \label{sec:sequences}

\subsection{Worm diagrams} \label{sub:worm-diagrams}
We now define worm diagrams as certain collections of worms. Worms will always be conflated with exceptional modules, as explained in \hyref{Subsection}{sub:worms}. 
We consider $\IZ\times\IZ$ as a lattice grid in the obvious way. 

\begin{definition}
A \emph{worm diagram of size $t$} is a graph with the following properties:
  \begin{enumerate}
  \item the vertices exhaust the triangle $\{(m,n) \in \IZ\times\IZ \mid m+n \leq t \text{ and } m,n\geq 1 \}$,
  \item the edges lie on the lattice grid,
  \item the connected components are $t$ worms of lengths $1,2,\ldots,t$, respectively.
  \end{enumerate}
\end{definition}

\begin{proposition} \label{prop:worm-diagrams}
Every worm diagram of size $t$ gives rise to a full exceptional sequence of $\cA_t$-modules.
\end{proposition}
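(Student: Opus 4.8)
The plan is to show that the $t$ worms in a worm diagram $D$, suitably ordered, form a full exceptional sequence of $\cA_t$-modules. Each worm is already an exceptional module by the discussion in \hyref{Subsection}{sub:worms} and \hyref{Theorem}{thm:module-classification}, since it is indecomposable, thin, and has rank $1$ (its last nonzero vertex is $t$). So I must (a) find the correct order, (b) check that between any two worms $W,W'$ in that order one has $\Hom^\bullet(W',W)=0$ (the later-kills-earlier vanishing), and (c) check fullness, i.e.\ that the sequence has length $t=\rk K_0(\cA_t)$, which is automatic once (b) holds because an exceptional sequence of the maximal length in $\Db(\cA_t)$ is full. Thus the real content is (a) and (b).

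\textbf{Finding the order.} I would read off the order from the combinatorics of the diagram. A worm of length $\ell$ occupies $\ell$ lattice points on the grid inside the triangle $\{m+n\le t,\ m,n\ge1\}$; I would encode a worm by the position of its starting point (the unique endpoint that is lowest-leftmost, i.e.\ minimal in some fixed total order on $\IZ^2$) together with its $\alpha/\beta$-word. The claim to isolate is: the map sending the worm of length $\ell$ to its dimension vector, and to the ``height'' $n$-coordinate of its support, lets us linearly order the worms by, say, decreasing length, breaking ties by position; and with respect to this order the Hom-complexes vanish in the required direction. In fact the cleanest route is to order the worms so that $W$ precedes $W'$ whenever $\dim W > \dim W'$, and then argue that worms of equal dimension never occur in a worm diagram except for the trivial overlap structure forced by condition (1) — more precisely, conditions (1)–(3) force the $\ell$-th worm's support to sit ``above and to the left'' in a way that makes the partial order by position into the desired total order.

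\textbf{The vanishing.} For two worms $W,W'$ I would compute $\Hom^\bullet(W',W)$ using the explicit complex $\cC(W',W)$ from \hyref{Proposition}{prop:ext-spaces}. Since both modules are thin, each $\Hom(M_i,N_i)$ term is at most one-dimensional, and $\cC(W',W)$ is a small complex of $0$'s and $\kk$'s whose differentials are determined by which arrows ($\alpha$ or $\beta$) are present in $W$ and $W'$ at each vertex. By \hyref{Lemma}{lem:exceptional-pairs}, for any exceptional pair $\Ext^2$ already vanishes and $\hom\le1$, so I only need $\Hom=0$ and $\Ext^1=0$ in the chosen direction; and since $\chi(W',W)=\hom-\ext^1+\ext^2$ depends only on dimension vectors, it suffices to show $\Hom(W',W)=0$ together with $\chi(W',W)\le 0$, which by \hyref{Lemma}{lem:euler-pairing} is a purely numerical inequality on the two staircase-shaped dimension vectors. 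The geometric input is that in a worm diagram the worms are \emph{disjoint} as subsets of the grid; I expect this disjointness, translated via the drawing convention (worm = lattice path, $\beta$ = up-step, $\alpha$ = right-step, starting height $=$ first nonzero vertex) to be exactly what kills the relevant $\Hom$ and makes the Euler form nonpositive in the right direction.

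\textbf{Main obstacle.} The hard part is the bookkeeping that turns ``the worms are disjoint lattice paths filling the triangle'' into ``$\Hom^\bullet(W',W)=0$ for $W$ before $W'$.'' Concretely: given the dimension vectors $\udim W = (0,\dots,0,1,\dots,1)$ supported on $[a+1,t]$ and $\udim W' = (0,\dots,0,1,\dots,1)$ supported on $[b+1,t]$, a nonzero homomorphism $W'\to W$ exists iff the $\alpha/\beta$-patterns are compatible on the overlap $[\max(a,b)+1,t]$; I must show the grid-disjointness in $D$ forbids exactly the compatible configurations when $W$ is the earlier (larger) worm, and simultaneously control $\ext^1$ via $\chi$. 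I anticipate needing a short lemma that reads the Hom-condition between two worms off their pictures (``$W'$ maps to $W$ iff the path of $W$ eventually lies weakly below/left of the path of $W'$'' or similar), after which the proposition follows by comparing this with the non-crossing / space-filling constraints (1)–(3) of a worm diagram.
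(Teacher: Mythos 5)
There is a genuine gap, and it is the ordering. You propose to order the worms by decreasing length, but the order that the proposition refers to (and that the paper uses) is by the position of each worm's endpoint along the diagonal boundary of the triangle; with respect to that order the lengths form an \emph{arbitrary} permutation of $\{1,\ldots,t\}$. Your order fails already on the standard diagram for $\Delta$: the sequence read off that diagram is $(\Delta(1),\ldots,\Delta(t))$, with \emph{increasing} lengths, and the reversed sequence $(\Delta(t),\ldots,\Delta(1))$ is not exceptional because $\Hom(\Delta(1),\Delta(2))\neq0$ (the inclusion) sits in the forbidden direction. More structurally: if decreasing length were always the correct order, every worm diagram would produce a sequence with length permutation $(t,\ldots,2,1)$, which contradicts the bijection with $\symm{t}$ in \hyref{Theorem}{thm:exceptional-bijections} — the whole point is that the $t!$ diagrams realise all $t!$ length permutations. (Your worry about ties is moot: condition (3) forces the lengths to be exactly $1,\ldots,t$.)

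Three further points on part (b), which you rightly identify as the real content but do not carry out. First, invoking \hyref{Lemma}{lem:exceptional-pairs} to kill $\Ext^2$ is circular: that lemma assumes the pair is already exceptional, which is what you are proving; the paper instead uses \hyref{Proposition}{prop:inequalities} directly, namely $\ext^2(M,N)\le\hom(N,M)$ together with the fact that equality forces rank $0$. Second, the Euler-form inequality you want is $\chi(W',W)\ge0$ (in fact $=0$), not $\le0$: once $\hom(W',W)=\ext^2(W',W)=0$ you get $\chi(W',W)=-\ext^1(W',W)\le0$ for free, and it is the reverse inequality that yields $\ext^1(W',W)=0$. Third, the actual computation of $\Hom$ between two worms — your ``main obstacle'' — is exactly where the diagonal ordering enters: the paper compares, for the two worms, the numbers of consecutive $\alpha$-edges into and $\beta$-edges out of the vertex $t$ (these counts are monotone in the diagonal position by the shape of the diagram) and shows that a scalar at vertex $t$ extends uniquely leftwards in one direction and only by zero in the other, giving $\Hom(E^i,E^j)=\kk$ and $\Hom(E^j,E^i)=0$ for $i<j$. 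Without that lemma, and with the wrong order, the argument does not go through.
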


\begin{proof}
For an exceptional module $E$, denote by $\mu_\alpha(E), \mu_\beta(E) \in \{0,\ldots,t-1\}$ the numbers of consecutive $\alpha$-maps ending in the vertex $t$, and of consecutive $\beta$-maps going out of vertex $t$, respectively.

Let now $(E^1,\ldots,E^t)$ be a worm diagram, where the $i$-th worm $E^i$ is the one containing the point $(i,i)$. We consider $E^i$ as an exceptional $\cA_t$-module. If $i<j$, then by the shape of worm diagrams, we have $\mu_\beta(E^i) \geq \mu_\beta(E^j)$. For any $\lambda\in\kk$, the map
  $\smash{(E^i)_t=\kk \xxto{\lambda} \kk=(E^j)_t}$
of the vector spaces of $E^i$ and $E^j$ in the vertex $t$ can be uniquely extended to the left: by $\lambda$ for each vertex where the maps in $E^i$ and $E^j$ coincide, and by $0$ once they differ.
\[ \xymatrix@R=4ex{
  \cdots                & \kk \ar[l]_-\beta \ar@{..>}[d]^-0  & \kk \ar[l]_-\beta \ar@{..>}[d]^-\lambda & \kk \ar[l]_-\beta \ar@{..>}[d]^-\lambda & \kk \ar[l]_-\beta \ar[d]^-\lambda \\
  \cdots \ar[r]_-\alpha & \kk \ar[r]_-\alpha & \kk               & \kk \ar[l]^-\beta                & \kk \ar[l]^-\beta \\
} \]
Likewise, we have $\mu_\alpha(E^i) \leq \mu_\alpha(E^j)$, and we have unique leftward extension of
  $\smash{(E^i)_t=\kk \xxto{\lambda} \kk=(E^j)_t}$
from the final vertex, in the same way.

This argument shows $\Hom(E^i,E^j)=\kk$ and, in the same way, $\Hom(E^j,E^i)=0$.

We now get $\Ext^2(E^i,E^j) = \Ext^2(E^j,E^i) = 0$ from \hyref{Proposition}{prop:inequalities}: the first vanishing uses the inequality $\ext^2(M,N) \leq \hom(N,M)$. Equality occurs precisely if $\rk M=0$ or $\rk N=0$, giving the second vanishing, since $\rk E^i=\rk E^j=1$.

Finally, we have $\chi(E^i,E^j)=\chi(E^j,E^i)=0$, using the Euler pairing for $\cA_t$, (\hyref{Lemma}{lem:euler-pairing}), and the dimension vectors of $E^i$ and $E^j$ (\hyref{Theorem}{thm:module-classification}). Hence
 $\Ext^1(E^i,E^j) \cong \Hom(E^i,E^j) = \kk$ and $\Ext^1(E^j,E^i) = 0$,
and this proves that $(E^1,\ldots,E^t)$ is an exceptional sequence.

The sequence is full because the dimension vectors $\udim(E^1),\ldots,\udim(E^t)$ span $\IZ^t = K_0(\cA_t)$.
\end{proof}

\begin{example}
We show the six worm diagrams of size 3:

\smallskip
\begin{center}
\wormgrid{\draw (0,2) node {}; \draw (0,1) node {} -- (1,1) node {};
          \draw (0,0) node {} -- (1,0) node {} -- (2,0) node {};}
\hspace{2em}
\wormgrid{\draw (2,0) node {}; \draw (1,0) node {} -- (1,1) node {};
          \draw (0,0) node {} -- (0,1) node {} -- (0,2) node {};}
\hspace{2em}
\wormgrid{\draw (2,0) node {}; \draw (0,2) node {} -- (0,1) node {};
          \draw (0,0) node {} -- (1,0) node {} -- (1,1) node {};}
\hspace{2em}
\wormgrid{\draw (1,1) node {}; \draw (0,2) node {} -- (0,1) node {};
          \draw (0,0) node {} -- (1,0) node {} -- (2,0) node {};}
\hspace{2em}
\wormgrid{\draw (1,1) node {}; \draw (2,0) node {} -- (1,0) node {};
          \draw (0,0) node {} -- (0,1) node {} -- (0,2) node {};}
\hspace{2em}
\wormgrid{\draw (0,2) node {}; \draw (2,0) node {} -- (1,0) node {};
          \draw (0,0) node {} -- (0,1) node {} -- (1,1) node {};}
\end{center}
By the proposition, each worm diagram corresponds to a full exceptional sequence of $\cA_3$-modules.
We will prove in \hyref{Proposition}{prop:exceptional-sequences} that in fact all full exceptional sequences
of modules come from worm diagrams.

There are $6=3!$ worm diagrams of size 3. By \hyref{Remark}{rem:action}, the symmetric group on $t$ letters acts simply-transitively on exceptional sequences of $\cA_t$-modules.
\end{example}

\subsection{Filtrations of the projective-injective module}

\begin{proposition} \label{prop:exceptional-sequences}
If $F^1 \subset \ldots \subset F^t = P(t)$ is a filtration such that
\begin{enumerate}
\item each $F^i$ is indecomposable of rank $i$ and
\item each graded piece $E^i \coloneqq F^i/F^{i-1}$ is thin,
\end{enumerate}
then there is a permutation $\sigma\in\symm{t}$ such that $(E^{\sigma(1)}, E^{\sigma(2)}, \ldots, E^{\sigma(t)})$ is a full exceptional sequence of $\cA_t$-modules.

Moreover, any full exceptional sequence of $\cA_t$-modules occurs in this way.
\end{proposition}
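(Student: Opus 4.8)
The plan is to prove the two halves separately. For the first half, assume we are given a filtration $F^1 \subset \cdots \subset F^t = P(t)$ satisfying (1) and (2). The rank function is additive on short exact sequences and $\rk F^i = i$, so each graded piece $E^i = F^i/F^{i-1}$ has rank $1$. Being thin of rank $1$, each $E^i$ is a (nonzero, necessarily indecomposable) exceptional module by \hyref{Theorem}{thm:module-classification}(1); note that $\udim(E^i) = (0,\ldots,0,1,\ldots,1)$ with the $1$'s exactly at vertices $t-i+1,\ldots,t$, so the dimension vectors $\udim(E^1),\ldots,\udim(E^t)$ are upper-triangular in the obvious sense and span $K_0(\cA_t) = \IZ^t$; thus whatever order we put them in, the resulting sequence will be full. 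It remains to find a permutation making the $E^{\sigma(i)}$ into an exceptional sequence, i.e.\ $\Hom^\bullet(E^{\sigma(i)}, E^{\sigma(j)}) = 0$ for $i > j$ and $E^{\sigma(i)}$ exceptional. The key point is that the filtration gives, for $i < j$, a surjection $F^j \onto F^j/F^{i-1}$ with a sub quotient isomorphic to $E^i$ and a quotient containing $E^j$, and more usefully the short exact sequences $0 \to F^{i} \to F^{j} \to F^j/F^i \to 0$. I would extract from this that $\Ext^{>0}(E^i, E^j) $ and $\Hom(E^i,E^j)$ are controlled: since $\chi(E^i,E^j) = 0$ whenever $\supp E^i, \supp E^j$ overlap as dimension vectors of the form above (compute via \hyref{Lemma}{lem:euler-pairing}), and since \hyref{Lemma}{lem:exceptional-pairs} forces $\ext^2 = 0$ and $\hom \leq 1$ for any exceptional pair, the only question is the direction of the (at most one-dimensional) $\Hom$ and $\Ext^1$. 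The filtration pins this down: if $i < j$ then $E^i$ is a subquotient of the submodule $F^{j-1}$ relative to $F^j$, forcing $\Hom(E^i, E^j) \neq 0$ in general is false — rather, I would argue that because $F^i$ is a submodule of $F^j$, one gets $\Hom(E^j, F^i) $ vanishing in a suitable range via the long exact sequence, hence choosing $\sigma$ to list the $E^i$ in decreasing order of the index of their first nonzero vertex (equivalently, decreasing dimension) yields the exceptional sequence.

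For the second half, start with an arbitrary full exceptional sequence of modules $(M^1,\ldots,M^t)$. By \hyref{Theorem}{thm:module-classification}, each $M^k$ is indecomposable, thin, and (being part of an exceptional pair, so by \hyref{Lemma}{lem:exceptional-pairs} of rank $1$) of rank $1$; so $\udim(M^k) = (0,\ldots,0,1,\ldots,1)$ with $1$'s at vertices $a_k+1,\ldots,t$ for some $a_k \in \{0,\ldots,t-1\}$. Since the sequence is full, the $\udim(M^k)$ span $\IZ^t$, which forces the multiset $\{a_1,\ldots,a_t\} = \{0,1,\ldots,t-1\}$; reorder so that $a_{\pi(1)} > a_{\pi(2)} > \cdots > a_{\pi(t)}$, i.e.\ $\udim(M^{\pi(k)})$ has exactly $k$ ones. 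I would then build the filtration inductively: set $G^k$ to be the submodule of $P(t)$ generated appropriately so that $G^k$ has rank $k$ and $G^k/G^{k-1} \cong M^{\pi(k)}$. Concretely, since $P(t) = I(t)$ has $\udim = (1,2,\ldots,t)$ and is uniserial-like in each rank layer, and since an exceptional sequence with $\hom(M^i,M^j)\le 1$, $\ext^2 = 0$, and (by $\chi = 0$ on overlapping supports) $\ext^1(M^i,M^j) \cong \Hom(M^i,M^j)$ for $i<j$ gives exactly the extension data needed, I would realize $P(t)$ as the iterated (essentially unique) extension of the $M^{\pi(k)}$ in the order $k = 1,\ldots,t$, each step using the one-dimensional $\Ext^1$ in the correct direction. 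The resulting submodules $F^k$ are the partial extensions; each $F^k$ has rank $k$, is indecomposable (a nonsplit iterated extension of rank-$1$ thin modules with overlapping supports), and $F^k/F^{k-1} \cong M^{\pi(k)}$ is thin — so conditions (1) and (2) hold.

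The main obstacle I expect is the second half: showing that an arbitrary full exceptional sequence of modules actually \emph{assembles} into a filtration of $P(t)$ specifically (rather than of some other rank-$t$ module). The dimension-vector bookkeeping only tells us the iterated extension has $\udim = (1,2,\ldots,t) = \udim P(t)$; one must additionally check that the extension is the "correct" one, i.e.\ that the nonsplit extensions chosen at each stage really do glue to $P(t)$ and not to a decomposable module or a different indecomposable. Here I would lean on the rigidity provided by \hyref{Lemma}{lem:exceptional-pairs} ($\hom \le 1$ forces the relevant $\Hom$ and $\Ext^1$ spaces to be at most one-dimensional, so extensions are essentially unique) together with the fact that $P(t) = I(t)$ is the \emph{unique} indecomposable module of rank $t$ whose socle is $S(t)$ and whose dimension vector is $(1,\ldots,t)$ — plus the observation that any thin indecomposable rank-$1$ quotient of a rank-$t$ thin-filtered indecomposable must be "compatible" with the maps of $P(t)$ in the sense already exploited in the proof of \hyref{Proposition}{prop:worm-diagrams}. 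Checking that the exceptional-sequence axioms force precisely the compatibility needed to land in $P(t)$ — and that the permutation $\pi$ makes $(E^{\sigma(1)},\ldots,E^{\sigma(t)})$ with $\sigma = \pi^{-1}$ agree with the original sequence up to the claimed reordering — is the delicate combinatorial core, and is essentially the converse to the worm-diagram construction.
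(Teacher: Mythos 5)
Both halves of your plan have gaps, and the first contains a concrete error. Your prescription for $\sigma$ --- list the graded pieces by dimension, equivalently by starting vertex --- is false, as is the preliminary claim that $\udim(E^i)$ has its ones exactly at vertices $t-i+1,\ldots,t$: the lengths of the graded pieces do form a permutation of $1,\ldots,t$, but not the identity permutation, and the correct ordering is not a function of the lengths alone. The paper's own $t=2$ example refutes this: the filtration $\Delta(2)\subset P(2)$ has graded pieces $E^1=\Delta(2)$, $E^2=S(2)$ and yields the exceptional sequence $(S(2),\Delta(2))$ (shorter module first), whereas $\nabla(2)\subset P(2)$ yields $(\nabla(2),S(2))$ (longer module first) --- the same multiset of dimensions in opposite orders, because $\Hom(S(2),\Delta(2))\neq 0$ kills the order $(\Delta(2),S(2))$ while $\Hom(\nabla(2),S(2))\neq 0$ kills $(S(2),\nabla(2))$. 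The order depends on the shapes of the worms, not their lengths; the paper's proof reads the filtration off as a worm diagram (the $i$-th member of the sequence is the graded piece whose vertex-$t$ end occupies the $i$-th position on the diagonal) and then quotes \hyref{Proposition}{prop:worm-diagrams}, where the actual Hom/Ext computations between worms are carried out. Note also that you cannot invoke \hyref{Lemma}{lem:exceptional-pairs} at this stage: its hypothesis is that the pair is already exceptional, which is exactly what you are trying to establish; the non-circular input is \hyref{Proposition}{prop:inequalities}. (A smaller issue: ``thin of rank $1$'' does not by itself imply indecomposable --- $S(2)\oplus S(3)$ over $\cA_3$ is thin of rank $1$ --- so indecomposability of the graded pieces needs an argument, not a parenthesis.)

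For the converse you correctly isolate the crux --- why the iterated extension lands on $P(t)$ rather than some other rank-$t$ module --- but you do not close it, and your fallback (uniqueness of $P(t)$ among indecomposables with socle $S(t)$ and dimension vector $(1,\ldots,t)$) would still require proving that your iterated extension is indecomposable with simple socle $S(t)$, which you do not address. The paper's missing idea is tilting-theoretic: the iterated universal extension $T=T^1\oplus\cdots\oplus T^t$ of the exceptional sequence is a tilting module (the required $\Ext^2$-vanishing in both directions comes from \hyref{Lemma}{lem:exceptional-pairs} together with \hyref{Proposition}{prop:inequalities}); every tilting module must contain the unique indecomposable projective-injective $P(t)$ as a direct summand; and this forces all universal-extension multiplicities to equal $1$, whence $T^t=P(t)$ and the chain $T^1\subset\cdots\subset T^t$ is the desired filtration. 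I recommend rebuilding both halves around these two devices: worm diagrams for the forward direction, the tilting module for the converse.
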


\begin{proof}
$(\Longrightarrow)$ Assume that $F^\cdot$ is a filtration as in the theorem. Then each graded piece $E^i \coloneqq F^i/F^{i-1}$ is thin and indecomposable of rank 1, hence an exceptional module by \hyref{Theorem}{thm:module-classification}. Since $\udim(P(t))=(1,2,\ldots,t)$, the filtration induces a worm diagram of size $t$, with worms $E^{\sigma(1)},\ldots,E^{\sigma(t)}$, where $E^{\sigma(i)}$ is the worm ending in $(i,t-i)$. By \hyref{Proposition}{prop:worm-diagrams}, $(E^{\sigma(1)},\ldots,E^{\sigma(t)})$ is a full exceptional sequence.

$(\Longleftarrow)$ Given an exceptional sequence of $\cA_t$-modules $\cE = (E^1,\ldots,E^t)$, form its iterated universal extension $T = T^1\oplus\cdots\oplus T^t$ via
  $0 \to T^{i-1} \to T^i \to (E^{t+1-i})^{\oplus c_i} \to 0$
for some $c_i\geq1$. Then $T$ is a tilting module because $\Ext^2(E^i,E^j) = 0$ for all $i,j$ by \hyref{Lemma}{lem:exceptional-pairs}.
  From the construction of the iterated universal extensions (as opposed to coextensions), we have $T^1 \subset \cdots \subset T^t$.
The maximal summand $T^t$ is the universal extension of $(E^1)^{\oplus a_1},\ldots,(E^t)^{\oplus a_t}$, where the numbers $a_i\geq1$ are dimensions of certain $\Ext^1$-spaces.

Because $P(t)$ is the unique indecomposable injective-projective $\cA_t$-module, it must occur as a summand of any tilting object. This forces $a_1=\cdots=a_t=1$ (and subsequently $c_i=1$ at all steps), i.e.\ $T^t=P(t)$.
Hence we find the smaller direct summands as Jordan--H\"older subquotients for the filtration $F^i=T^i$ of $P(t)$.
Note $\udim(P(t)) = (1,2,\ldots,t)$, corresponding to the triangle grid of worm diagrams.
\end{proof}

\begin{corollary} \label{cor:worm-diagrams}
There is a bijection between worm diagrams of size $t$ and full exceptional sequences of $\cA_t$-modules.  
\end{corollary}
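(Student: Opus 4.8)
The plan is to assemble Propositions \ref{prop:worm-diagrams} and \ref{prop:exceptional-sequences} into a bijection, making the two directions of Proposition \ref{prop:exceptional-sequences} mutually inverse. Let $\Phi$ be the map from worm diagrams of size $t$ to full exceptional sequences of $\cA_t$-modules given by Proposition \ref{prop:worm-diagrams}: read the $t$ worms of a diagram $D$ as exceptional modules and order them as in that proof. The ``moreover'' clause of Proposition \ref{prop:exceptional-sequences} is exactly the statement that $\Phi$ is surjective, so only injectivity is at issue, and I would get it by exhibiting an explicit inverse $\Psi$.

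For $\Psi$, run the $(\Longleftarrow)$ construction of Proposition \ref{prop:exceptional-sequences}: a full exceptional sequence $\cE$ has an iterated universal extension $T^1 \subset \cdots \subset T^t$; since universal extensions are unique up to isomorphism and $P(t)$ is the unique indecomposable projective--injective $\cA_t$-module, $T^t$ is forced to be $P(t)$ (with all multiplicities $a_i$ and $c_i$ equal to $1$), so $\cE$ \emph{canonically} determines the filtration $F^\bullet = T^\bullet$ of $P(t)$ with indecomposable rank-$i$ terms and thin graded pieces. Because $\udim P(t) = (1,2,\ldots,t)$ lays out the triangular grid, the placement of these graded pieces inside $P(t)$ assembles into a worm diagram $\Psi(\cE)$. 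One then checks $\Psi\circ\Phi = \id$ and $\Phi\circ\Psi = \id$; since $\Phi$ is already known to be surjective, it suffices to verify $\Psi\circ\Phi = \id$, and then $\Phi$ is automatically a bijection with inverse $\Psi$.

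The only real work is $\Psi\circ\Phi = \id$: one must show that the evident filtration of $P(t)$ attached to a worm diagram $D$ --- the nested submodules whose successive quotients are the worms of $D$ --- is genuinely an iterated universal extension of $\Phi(D)$, so that the abstract recipe for $\Psi$ rebuilds this same filtration rather than some rearrangement of the worms. I expect this to follow from uniqueness of universal extensions together with the $\Ext$-data already in hand: Lemma \ref{lem:exceptional-pairs} gives $\Ext^2 = 0$ on an exceptional pair and $\ext^1 \le 1$, while Proposition \ref{prop:ext-spaces} gives the explicit cocycle description, so the extension class occurring at each stage is pinned down up to a scalar. The remaining bureaucracy --- reconciling the ordering convention ``worm through $(i,i)$'' of Proposition \ref{prop:worm-diagrams} with ``worm ending in $(i,t-i)$'' of Proposition \ref{prop:exceptional-sequences}, i.e.\ checking that the relevant permutation $\sigma$ is trivial --- is routine bookkeeping on the triangular grid.
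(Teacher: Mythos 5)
Your skeleton is the paper's own: the corollary is meant to be read off from \hyref{Proposition}{prop:worm-diagrams} together with \hyref{Proposition}{prop:exceptional-sequences}, and your surjectivity argument (the ``moreover'' clause, with the ordering of the sequence forced since $\hom(E^i,E^j)=1$ for $i<j$ and $0$ for $i>j$) is exactly what the paper intends. Where you diverge is injectivity, and there your route is both heavier than necessary and, as written, not self-contained. The step you defer --- that ``the evident filtration of $P(t)$ attached to a worm diagram $D$'' is the iterated universal extension of $\Phi(D)$ --- presupposes that such a filtration exists, i.e.\ that a worm diagram canonically determines nested submodules of $P(t)$ whose subquotients are its worms. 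That is precisely the diagram-to-filtration direction of the correspondence, which has not been established at this point: the $(\Longrightarrow)$ part of \hyref{Proposition}{prop:exceptional-sequences} only goes from filtrations to diagrams. To carry out your plan you would have to construct this filtration, check indecomposability (or at least non-splitness) of each term so that $\ext^1\leq 1$ pins down the extension class, and match the order in which the worms appear as graded pieces with the order $E^t, E^{t-1},\ldots$ of the universal-extension tower; none of this is false, but it is genuine work, not bookkeeping.

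The good news is that it is avoidable: injectivity of $\Phi$ is immediate. The entries of $\Phi(D)$ \emph{are} the worms of $D$ as modules; the cell of the $i$-th worm at vertex $t$ sits at the position of the anti-diagonal prescribed by $i$, and the rest of its path in the grid is dictated by its $\alpha/\beta$-word. Hence $D$ is reconstructible from the sequence $\Phi(D)$, so $\Phi(D)=\Phi(D')$ forces $D=D'$. With that observation, well-definedness (\hyref{Proposition}{prop:worm-diagrams}) and your surjectivity paragraph already give the bijection, and the entire $\Psi\circ\Phi=\id$ verification can be dropped.
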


\begin{corollary} \label{cor:hom}
  Let $(E^1,\ldots,E^t)$ be a full exceptional sequence of $\cA_t$-modules. Then
  $\hom(E^i,E^j) = \ext^1(E^i,E^j) = 1$ and $\ext^2(E^i,E^j) = 0$ for any $i<j$.
\end{corollary}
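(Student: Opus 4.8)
The plan is to reduce the statement to the worm-diagram picture, where everything has already been computed in the proof of \hyref{Proposition}{prop:worm-diagrams}. By \hyref{Corollary}{cor:worm-diagrams} (equivalently, by \hyref{Proposition}{prop:worm-diagrams} combined with \hyref{Proposition}{prop:exceptional-sequences}), the given full exceptional sequence $(E^1,\dots,E^t)$ is, up to the labelling fixed there, the one attached to a worm diagram, with $E^i$ the worm through the $i$-th diagonal lattice point. That proof already exhibits, for $i<j$, the identifications $\Hom(E^i,E^j)=\kk$, $\Ext^1(E^i,E^j)\cong\Hom(E^i,E^j)=\kk$ and $\Ext^2(E^i,E^j)=0$ (together with the vanishings $\Hom(E^j,E^i)=\Ext^1(E^j,E^i)=\Ext^2(E^j,E^i)=0$), so the corollary is in fact already proved. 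I would nevertheless give the streamlined self-contained argument, so as to isolate where each ingredient enters.

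First I would invoke \hyref{Theorem}{thm:module-classification}(1): each $E^i$ is exceptional, hence indecomposable, thin and of rank $1$, so $\udim(E^i)=(0,\dots,0,1,\dots,1)$ with $a_i$ leading zeros for some $a_i\in\{0,\dots,t-1\}$, the block of $1$'s reaching vertex $t$. Fullness forces $[E^1],\dots,[E^t]$ to be a basis of $K_0(\cA_t)$, hence the $a_i$ to be pairwise distinct. Then for $i<j$ the pair $(E^i,E^j)$ is exceptional, so \hyref{Lemma}{lem:exceptional-pairs} gives $\ext^2(E^i,E^j)=0$ and $\hom(E^i,E^j)\le 1$. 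Inserting the two dimension vectors into the Euler form of \hyref{Lemma}{lem:euler-pairing}, the summand $m_tn_t=1$ is cancelled by a single term of the remaining sum (the one indexed by $\max(a_i,a_j)$, which equals $-1$), so $\chi(E^i,E^j)=0$; the hypothesis $a_i\neq a_j$ is exactly what guarantees this cancelling term. Combining, $\hom(E^i,E^j)=\ext^1(E^i,E^j)\in\{0,1\}$.

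The one step that is not formal is the non-vanishing $\hom(E^i,E^j)\ge 1$, and I expect this to be the crux: it is where the worm-diagram structure is genuinely used, not just dimension counts. Following the proof of \hyref{Proposition}{prop:worm-diagrams}, for two worms $E^i$, $E^j$ with $i<j$ in one worm diagram one constructs a nonzero morphism $E^i\to E^j$ by choosing an isomorphism $(E^i)_t=\kk\to\kk=(E^j)_t$ at vertex $t$ and extending it to the left --- by the same scalar as long as the two worms coincide, by $0$ from the first vertex where they diverge --- and the shape constraints on worm diagrams, recorded as $\mu_\beta(E^i)\ge\mu_\beta(E^j)$ and $\mu_\alpha(E^i)\le\mu_\alpha(E^j)$, are precisely what make this extension a homomorphism. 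Hence $\hom(E^i,E^j)=\ext^1(E^i,E^j)=1$ while $\ext^2(E^i,E^j)=0$, as required. The only real care needed is to keep the labelling conventions of \hyref{Corollary}{cor:worm-diagrams} straight, so that ``$i<j$'' for the exceptional sequence matches ``$i<j$'' for the worms of the associated diagram.
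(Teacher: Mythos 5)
Your proposal is correct and takes essentially the same route as the paper: the published proof likewise reduces the statement to worm diagrams via \hyref{Corollary}{cor:worm-diagrams} and then cites the Hom/Ext computations already carried out in the proof of \hyref{Proposition}{prop:worm-diagrams}. Your streamlined unwinding (exceptional-pair vanishing from \hyref{Lemma}{lem:exceptional-pairs}, the Euler-form cancellation giving $\chi(E^i,E^j)=0$, and the worm construction supplying only the non-vanishing $\hom(E^i,E^j)\geq 1$) is an accurate, slightly more explicit account of the same ingredients.
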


\begin{proof}[Proof of \hyref{Corollary}{cor:hom}]
  The Hom and Ext dimensions among worms have been computed in the proof of \hyref{Proposition}{prop:worm-diagrams}. These dimensions then apply to all full exceptional sequences of modules, by the previous corollary.
\end{proof}

\begin{example}
Consider the standard exceptional sequence $\Delta = (\Delta(1),\Delta(2),\Delta(3))$ of $\lmod{\cA_3}$.
Its associated iterated universal extension and the worm diagram are

\medskip
\parbox{0.7\textwidth}{
$ T = T^1 \oplus T^2 \oplus T^3
     = \Delta(3) \oplus \extension{\Delta(2) \\ \Delta(3)} \oplus
       \extension{\Delta(1) \\ \Delta(2) \\ \Delta(3)}
$
}
\hfill
\parbox{0.3\textwidth}{
  \wormgrid{\draw (0,2) node {}; \draw (0,1) node {} -- (1,1) node {};
            \draw (0,0) node {} -- (1,0) node {} -- (2,0) node {};}
}
\smallskip

\noindent
We mention in passing that the maps $\Delta(1) \to \Delta(2) \to \Delta(3)$ are all injective, so that condition $\ASS$ of \cite[\S1.3]{Hille-Ploog1} is met, which means that $T$ is an exact tilting object.
\end{example}

\smallskip
\begin{example}
For $t=2$, we have $T = T^1 \oplus T^2$ with
  $T^2 = P(2) = \smash{[\xymatrix@1{\kk\: \ar@<0.5ex>[r]^{(1~0)^t} & \:\kk^2 \ar@<0.5ex>[l]^{(0~1)}}]}$. \newline
There are two filtrations meeting the conditions of \hyref{Proposition}{prop:exceptional-sequences}:
\[ \begin{array}{@{}llllll@{}}
F^1 = E^1 \coloneqq \Delta(2) = [\xymatrix@C=1.5em@1{\kk\: \ar@<0.7ex>[r]|1 & \:\kk \ar@<0.7ex>[l]|0 }] \subset T^2, &
E^2 = S(2), & \cE = (E^2,E^1), & \sigma = (12) &
\wormgridtwo{\draw (0,1) node {}; \draw (1,0) node {} -- (0,0) node {};} \\[1ex]

F^1 = E^1 \coloneqq \nabla(2) = [\xymatrix@C=1.5em@1{\kk\: \ar@<0.7ex>[r]|0 & \:\kk \ar@<0.7ex>[l]|1 }] \subset T^2, &
E^2 = S(2), & \cE = (E^1,E^2), & \sigma = \id &
\wormgridtwo{\draw (1,0) node {}; \draw (0,1) node {} -- (0,0) node {};}
\end{array} \]
\end{example}

\section{Group actions on exceptional sequences}

\noindent
Denote by $\Exc{t}$ the set of full exceptional sequences in $\Db(\cA_t)$, up to isomorphism, and by $\mExc{t}$ the subset of sequences of modules. Let $\braid{t}$ be the braid group on $t$ strands, and $\symm{t}$ the symmetric group of $t$ letters. There is a canonical surjective homomorphism $\braid{t}\to\symm{t}$.

The braid group $\braid{t}$ acts in two ways on $\Exc{t}$: first, exceptional sequences can be mutated (we will always deal with right mutations in this article). Second, the $2$-spherical modules $S(1),\ldots,S(t-1)$ form an $A_{t-1}$-chain, and thus give rise to a  $\braid{t}$-action on the whole derived category (see \hyref{Subsection}{sub:braids}), and in particular on $\Exc{t}$. Moreover, we will see that the symmetric group $\symm{t}$ acts simply-transitively on $\mExc{t}$ from the left and from the right, in a combinatorial fashion.

The $\symm{t}$-action on $\mExc{t}$ does not extend to $\Exc{t}$, and the $\braid{t}$-actions on $\Exc{t}$ do not restrict to $\mExc{t}$. Nevertheless, we will prove that the two braid group actions lift the symmetric group actions in a natural way. In order to see this, we introduce a count of all vertical edges in a worm diagram:
\begin{align*}
  f\colon \mExc{t} &\to \{0,1,\ldots,{\textstyle\binom{t}{2}}\}, \\
   \cE = (E^1,\ldots,E^t) &\mapsto f(\cE) \text{ is the number of $\beta$-maps among $E^1,\ldots,E^t$.} 
\end{align*}
Clearly, minimum and maximum are uniquely achieved by $f(\Delta)=0$ (no vertical edges) and $f(\nabla)=\binom{t}{2}$ (all edges vertical), respectively.

\subsection{Symmetric group action} \label{sub:symmetric-group-action}

We define two permutations $\sigma(\cE),\lambda(\cE)\in\symm{t}$ for any full exceptional sequence $\cE$ of $\cA_t$-modules. Mostly, we employ $\sigma(\cE)$.

\begin{definition}
Let $\cE = (E^1,\ldots,E^t)$ be a full exceptional sequence of $\cA_t$-modules.
\begin{enumerate}
\item The \emph{(start) permutation} of $\cE$ is $\sigma(\cE) \coloneqq (\sigma(E^1),\ldots,\sigma(E^t)) \in \symm{t}$, where $\sigma(E^i) \coloneqq t+1-\dim(E^i)$ is the starting vertex of $E^i$.
\item The \emph{length permutation} of $\cE$ is $\lambda(\cE) \coloneqq (\dim E^1,\ldots,\dim E^t) \in \symm{t}$.
\end{enumerate}
\end{definition}

\begin{example}
For the $\Delta$-modules, $\sigma(\Delta(i))=t+1-i$. Thus their start permutation $\sigma(\Delta) = (t,\ldots,2,1) = \omega \in\symm{t}$ is the longest word.
\end{example}

\begin{lemma} \label{lem:permutations}
Let $\cE = (E^1,\ldots,E^t)$ be a full exceptional sequence of $\cA_t$-modules.

\begin{enumerate}
\item $\lambda(\cE) = \omega\cdot\sigma(\cE)$, where $\omega \coloneqq (t,\ldots,2,1) \in \symm{t}$
\item For $i<j$ fixed: $\sigma(i) < \sigma(j) \iff (E^i)_{j-1} \xxfrom{\beta} (E^i)_j$ is a vertical edge.
\item $f(\cE) = \#\{ (i,j) \mid 1\leq i < j \leq t, \sigma(i) < \sigma(j) \}$
\end{enumerate}
\end{lemma}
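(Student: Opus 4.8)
The plan is to deduce all three parts from the explicit combinatorics of worm diagrams, using \hyref{Corollary}{cor:worm-diagrams} to pass freely between an exceptional sequence $\cE$ and its worm diagram, and \hyref{Proposition}{prop:worm-diagrams} together with \hyref{Corollary}{cor:hom} for the Hom/Ext data. Throughout, write $\sigma = \sigma(\cE)$.

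For part (1), I would simply compute: by definition $\lambda(\cE)_i = \dim E^i$ and $\sigma(\cE)_i = t+1-\dim E^i$, so $\lambda(\cE)_i = t+1-\sigma(\cE)_i = \omega(\sigma(\cE)_i)$, where $\omega$ is the order-reversing permutation $k \mapsto t+1-k$. Hence $\lambda(\cE) = \omega \cdot \sigma(\cE)$ as a product in $\symm{t}$. This is a one-line check; the only subtlety is fixing the convention that $\omega$ acts on values by $k\mapsto t+1-k$, which matches $\omega = (t,\ldots,2,1)$.

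For part (2), I would argue from the structure of worm diagrams established in the proof of \hyref{Proposition}{prop:worm-diagrams}. Recall that in a worm diagram of size $t$, the worm $E^i$ is the connected component through $(i,i)$, which corresponds to an exceptional $\cA_t$-module of dimension $d_i := \dim E^i$, i.e.\ supported on vertices $t+1-d_i,\ldots,t$, with $\sigma(i) = t+1-d_i$. Fix $i<j$. In the proof of \hyref{Proposition}{prop:worm-diagrams} it was shown that the invariants $\mu_\beta$ and $\mu_\alpha$ are monotone along the sequence; here the relevant fact is that whether the map $(E^i)_{j-1} \xxfrom{\beta} (E^i)_j$ is a $\beta$-edge is exactly the question of whether the worm $E^i$ is long enough to reach vertex $j$ and has a vertical segment there. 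The cleanest route is: $(E^i)_{j-1}\xxfrom{\beta}(E^i)_j$ being a nonzero vertical edge forces both $(E^i)_{j-1}\ne 0$ and $(E^i)_j\ne 0$, hence $d_i \ge t+1-(j-1)$, i.e.\ $\sigma(i) = t+1-d_i \le j-1 < j$; and since $\sigma$ is a bijection onto $\{1,\dots,t\}$ one shows conversely that $\sigma(i)<\sigma(j)$ together with $i<j$ and the worm-diagram shape (the components through $(i,i)$ and $(j,j)$ cannot cross) pins down the edge at position $j-1,j$ in $E^i$ to be $\beta$ rather than $\alpha$ or zero. I expect this to be the main obstacle: one must carefully translate the non-crossing/nesting geometry of the two worms into the statement about which letter sits at a given position, and check the boundary cases $j=t$ and when $E^i$ or $E^j$ is the singleton worm $S(t)$. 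The rest is bookkeeping once the dictionary worm$\leftrightarrow$word$\leftrightarrow$dimension vector from \hyref{Subsection}{sub:worms} is in hand.

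For part (3), I would combine part (2) with the definition of $f$. By definition $f(\cE)$ counts all $\beta$-maps occurring among the worms $E^1,\ldots,E^t$, equivalently all vertical edges in the worm diagram. The observation is that each vertical edge of the worm diagram lies between two lattice rows and hence, reading off which worm $E^i$ passes through it and at which height $j$, corresponds to a vertical edge $(E^i)_{j-1}\xxfrom{\beta}(E^i)_j$ inside some single worm $E^i$; moreover the map $(i, \text{height } j) \mapsto (i,j)$ is a bijection between vertical edges of the diagram and pairs where such an edge occurs. Then part (2) identifies, for each fixed $i<j$, the presence of a vertical edge of $E^i$ at height $j$ with the inequality $\sigma(i)<\sigma(j)$, and one also notes that vertical edges of $E^i$ at heights $\le i$ cannot occur (the worm through $(i,i)$ starts at vertex $t+1-d_i$ and the relevant indices are $\ge$ something controlled by $i$) — or rather, one sets this up so that every $\beta$-edge of every worm is accounted for exactly once by a pair $i<j$. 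Summing over all pairs $1\le i<j\le t$ then gives $f(\cE) = \#\{(i,j) : 1\le i<j\le t,\ \sigma(i)<\sigma(j)\}$, which is exactly the assertion (and incidentally recovers $f(\Delta)=0$, $f(\nabla)=\binom{t}{2}$ as the extreme cases $\sigma=\omega$ and $\sigma=\id$).
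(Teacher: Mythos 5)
Part (1) is fine and is exactly the paper's one-line computation. The genuine gap is in part (2), where the actual content of the lemma is not established. Your ``forward direction'' uses only the \emph{existence} of the edge $(E^i)_{j-1}\xxfrom{\beta}(E^i)_j$ (not its verticality) and lands on $\sigma(i)<j$, which is neither the assertion $\sigma(i)<\sigma(j)$ nor implies it; and the converse, which is the whole point (which letter sits at a given position, as a function of the relative order of the starting vertices), is explicitly deferred (``I expect this to be the main obstacle''). Testing a size-$3$ diagram shows that the bookkeeping cannot be set up the way you describe: for the diagram with start permutation $(2,3,1)$, i.e.\ $\cE=(\nabla(2),S(3),\Delta(3))$, the pair $(i,j)=(1,2)$ has $\sigma(1)=2<\sigma(2)=3$, yet $(E^1)_1=0$, so $E^1$ has no edge between vertices $1$ and $2$ at all; the vertical edge of $E^1$ accounting for this non-inversion sits between vertices $\sigma(2)-1=2$ and $\sigma(2)=3$, i.e.\ at the \emph{starting vertex of $E^j$}, not at vertex $j$. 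So the identification has to be made along the anti-diagonal of the worm diagram at the level of the head of $E^j$ (this is what the paper's terse proof does: that diagonal meets exactly $j$ worms, $E^j$ meets it only in its head, and this forces the relevant edges of the earlier worms to be vertical exactly when $\sigma(i)<\sigma(j)$). None of this geometry is carried out in your proposal, and the one concrete computation you offer points at the wrong position.

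Part (3) inherits the gap: the bijection you need is between vertical edges of the diagram and pairs $(i,j)$ with $i<j$ and $\sigma(i)<\sigma(j)$, and for that you must show that \emph{every} vertical edge of $E^i$ occurs at the starting vertex of some worm $E^j$ with $j>i$ and $\sigma(j)>\sigma(i)$, and that distinct pairs hit distinct edges. Your sentence ``one sets this up so that every $\beta$-edge of every worm is accounted for exactly once by a pair $i<j$'' is precisely the statement to be proved, not an argument for it. The overall plan (reduce everything to worm-diagram combinatorics) is the same as the paper's and is the right one, but the heart of the lemma is missing.
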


\begin{proof} (1) At once from $\lambda(i) = \dim E^i$, $\sigma(i) = t+1-\dim E^i$ and $\omega(i) = t+1-i$.

(2) Assume $i<j$ and $\sigma(i)<\sigma(j)$. Consider the worm subdiagram of size $j$. It has boundary diagonal $(0,j),(1,j-1),\ldots,(j,0)$ which intersects precisely $j$ worms. Note that only the head (starting vertex) of $E^j$ occurs in the subdiagram; this is the point $(j,j)$. This forces all edges left of $j$ along the boundary to be vertical.

(3) follows from (2).  
\end{proof}

\begin{theorem} \label{thm:exceptional-bijections}
For fixed $t\in\IN$, there are bijections between the following sets:
\begin{enumerate}
\item Full exceptional sequences of $\cA_t$-modules.
\item Ascending filtrations $F^\cdot$ of $P(t)$ with $\rk F^i=i$ and $F^i/F^{i-1}$ thin.
\item Worm diagrams of size $t$.
\item The symmetric group $\symm{t}$.
\end{enumerate}
\end{theorem}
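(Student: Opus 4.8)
The plan is to prove \hyref{Theorem}{thm:exceptional-bijections} by assembling bijections already established (or nearly so) in the preceding material, and supplying the one genuinely new bijection, namely (3)$\leftrightarrow$(4). The equivalences (1)$\leftrightarrow$(2) and (1)$\leftrightarrow$(3) are essentially done: \hyref{Proposition}{prop:exceptional-sequences} gives (2)$\Rightarrow$(1) and (1)$\Rightarrow$(2), and its proof shows that a filtration produces a worm diagram and conversely \hyref{Proposition}{prop:worm-diagrams} produces an exceptional sequence from a worm diagram, with \hyref{Corollary}{cor:worm-diagrams} recording (1)$\leftrightarrow$(3). So the first step is simply to cite these results and note that (1), (2), (3) are in canonical bijection, the passages being: exceptional sequence $\cE=(E^1,\dots,E^t)$ $\mapsto$ iterated universal extension filtration $F^i=T^i$ of $P(t)$ $\mapsto$ worm diagram whose $i$-th worm is the Jordan--H\"older subquotient $F^i/F^{i-1}$, sitting at the lattice point dictated by $\udim P(t)=(1,2,\dots,t)$.

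The substantive part is (3)$\leftrightarrow$(4). I would define the map $\mathsf{w}\colon\{\text{worm diagrams of size }t\}\to\symm{t}$ to be the start permutation: given a worm diagram, read off from each worm its starting vertex via $\sigma(E^i)=t+1-\dim(E^i)$, as in \hyref{Subsection}{sub:symmetric-group-action}, where $E^i$ is the worm through the point $(i,i)$. One checks this is a well-defined element of $\symm{t}$ because the $t$ worms have pairwise distinct lengths $1,\dots,t$ (worm diagram axiom (3)), so the values $\sigma(E^1),\dots,\sigma(E^t)$ are a permutation of $1,\dots,t$. To see $\mathsf{w}$ is a bijection I would construct the inverse explicitly: given $\sigma\in\symm{t}$, the worm $W^i$ must have length $t+1-\sigma(i)$, and its shape is forced by the planarity/tiling constraints of a worm diagram --- I would argue that the worms can be built greedily along the anti-diagonals $\{m+n=k\}$ for $k=2,3,\dots,t$, where at each stage the constraint from \hyref{Lemma}{lem:permutations}(2) (a vertical edge of $E^i$ appears at level $j$ exactly when $\sigma(i)<\sigma(j)$) pins down whether the newly added lattice point extends the relevant worm upward or to the right, and that this determines a unique worm diagram. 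The two assignments are mutually inverse by construction, using \hyref{Lemma}{lem:permutations}(2) to match vertical edges on both sides.

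Alternatively --- and this may be cleaner to write --- I would avoid the explicit inverse and instead count: by \hyref{Proposition}{prop:worm-diagrams} and \hyref{Proposition}{prop:exceptional-sequences}, sets (1), (2), (3) are all in bijection, and $\mathsf{w}$ is a map from (3) to the finite set $\symm{t}$; it suffices to show $\mathsf{w}$ is injective, since then $|\{\text{worm diagrams}\}|\le t!$, and combined with surjectivity (exhibiting, for a given $\sigma$, at least one worm diagram with that start permutation --- e.g. the one obtained by applying the anti-diagonal construction above, whose validity as a worm diagram is a finite check) we get $|\{\text{worm diagrams}\}|=t!$ and $\mathsf{w}$ bijective. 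Injectivity of $\mathsf{w}$ follows because \hyref{Lemma}{lem:permutations}(2) shows the entire edge set of the worm diagram is recovered from $\sigma$: each worm $E^i$ has exactly $t-\sigma(i)$ edges (it has $\dim E^i - 1 = t-\sigma(i)$ edges), and at the $j$-th level ($j>i$, meaning along the boundary of the size-$j$ subdiagram) the edge of $E^i$ is vertical iff $\sigma(i)<\sigma(j)$ and horizontal otherwise, so the worm's word in $\alpha,\beta$ --- hence the worm itself and its placement in the triangle --- is determined.

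The main obstacle is the bookkeeping in the inverse construction: one must verify that the greedy anti-diagonal procedure genuinely yields a \emph{worm diagram} in the sense of the definition --- that the components are connected worms of the prescribed lengths, that edges stay on the grid and inside the triangle $\{m+n\le t,\ m,n\ge1\}$, and that no two worms collide. I expect this to reduce to an inductive argument on $t$ (or on the anti-diagonal index), where the inductive hypothesis handles the size-$(t-1)$ subdiagram obtained by deleting the outermost anti-diagonal and one then checks the single new anti-diagonal is consistently decorated by the inequalities $\sigma(i)\lessgtr\sigma(t)$; but making the "no collisions / connectivity" claim airtight is the delicate point, and it is exactly where \hyref{Lemma}{lem:permutations}(2) does the real work. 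Everything else is assembly of prior results.
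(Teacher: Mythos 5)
Your proposal is correct and follows essentially the same route as the paper: cite \hyref{Proposition}{prop:exceptional-sequences} and \hyref{Corollary}{cor:worm-diagrams} for $(1)\leftrightarrow(2)\leftrightarrow(3)$, then show the start-permutation map is injective (the paper calls this ``obvious''; your appeal to \hyref{Lemma}{lem:permutations}(2) to recover the full edge set from $\sigma$ is a welcome elaboration) and surjective via a greedy construction of the worm diagram from $\sigma$ --- the paper builds worm by worm from the left rather than anti-diagonal by anti-diagonal, but this is an inessential reordering of the same argument.
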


\begin{proof}
The bijection $(1) \longleftrightarrow (2)$ was established in \hyref{Proposition}{prop:exceptional-sequences} and the proof of that statement contained the bijection $(2) \longleftrightarrow (3)$, as mentioned in \hyref{Corollary}{cor:worm-diagrams}.

Mapping a full exceptional sequence of modules or, equivalently, the corresponding worm diagram to its permutation is obviously injective. Moreover, every permutation comes from a worm diagram: given $\sigma\in\symm{t}$, we start drawing a worm diagram with the left-most worm, which begins in $(0,t)$ and goes $t+1-\sigma(1)$ steps downwards. Given a partially completed worm diagram, the $i$-th worm starts in $(i,t-i)$ and is of length $t+1-\sigma(i)$; its shape is determined by the worms already drawn and the definition of worm diagrams.
\end{proof}

\begin{remark} \label{rem:action}
Assigning start permutations induces left and right actions of the symmetric group on full exceptional sequences of modules. By the above theorem, these actions are simply-transitive.
\end{remark}

\begin{example} \label{ex:t=3-worm-diagrams}
We consider again the six worm diagrams of size 3. Below each worm diagram, we give its start permutation, and we show how to move between the diagrams using simple left translations, e.g.\ $(12)\cdot\binom{123}{321}=\binom{123}{312}$.

\smallskip
\begin{center}

\wormgridc{\draw (0,2) node {}; \draw (0,1) node {} -- (1,1) node {};
           \draw (0,0) node {} -- (1,0) node {} -- (2,0) node {};}
          {\wormcap{321}}
\ltranspose{12}
\wormgridc{\draw (0,2) node {}; \draw (2,0) node {} -- (1,0) node {};
           \draw (0,0) node {} -- (0,1) node {} -- (1,1) node {};}
          {\wormcap{312}}
\ltranspose{23}
\wormgridc{\draw (2,0) node {}; \draw (0,2) node {} -- (0,1) node {};
           \draw (0,0) node {} -- (1,0) node {} -- (1,1) node {};}
          {\wormcap{213}}
\ltranspose{12}
\wormgridc{\draw (2,0) node {}; \draw (1,0) node {} -- (1,1) node {};
           \draw (0,0) node {} -- (0,1) node {} -- (0,2) node {};}
          {\wormcap{123}}
\ltranspose{23}
\wormgridc{\draw (1,1) node {}; \draw (2,0) node {} -- (1,0) node {};
           \draw (0,0) node {} -- (0,1) node {} -- (0,2) node {};}
          {\wormcap{132}}
\ltranspose{12}
\wormgridc{\draw (1,1) node {}; \draw (0,2) node {} -- (0,1) node {};
           \draw (0,0) node {} -- (1,0) node {} -- (2,0) node {};}
          {\wormcap{231}}
\end{center}
\end{example}

\subsection{Braid group action from right mutations} \label{sub:braids-mutation}

Our next aim is to categorify the right action, using the well-known braid group action on $\Exc{t}$ from mutating exceptional sequences; see \cite{Rudakov}.

Given an exceptional pair $(E',E)$ in $\Db(\cA_t)$, its
\emph{right mutation} $(E,\RR E')$ is again an exceptional pair, defined by the canonical triangle
\begin{align*}
   \RR E' \too E' \too \Hom^\bullet(E',E)^*\otimes E
\end{align*}
where we slightly deviate from the standard definition, by taking
$\RR E$ as the cocone of the canonical morphism rather than the cone.

An exceptional sequence $\cE = (E^1,\ldots,E^t)$ in $\Db(\cA_t)$ has a \emph{right mutation at $E^i$},
\begin{align*}
  \RR_i\cE &\coloneqq (E^1,\ldots,E^{i-1}, E^{i+1}, \RR E^i,E^{i+2},\ldots,E^t) \, ,
\end{align*}
and it is well-known that $\RR_i\cE$ is again an exceptional sequence, so that we get $\RR_i\colon\Exc{t}\isom\Exc{t}$; the inverses are left mutations. Mutations satisfy the braid relations, 
  $\RR_i\RR_{i+1}\RR_i \cE \cong \RR_{i+1}\RR_i\RR_{i+1} \cE$,
leading to an action
$\braid{t} \to \Exc{t}$.

\begin{lemma} \label{lem:right-mutation}
Let $(E',E)$ be an exceptional pair of $\cA_t$-modules with $\hom(E',E)=\ext^1(E',E)=1$ and such that non-zero morphisms $E' \to E$ are surjective. Then the right mutation $\RR E'$ is the module given by the extension
\[ 0 \too E \too \RR E' \too \ker(E'\to E) \too 0 \, . \]
\end{lemma}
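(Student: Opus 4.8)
The starting point is the defining triangle of the right mutation, which in our sign convention reads
\[ \RR E' \too E' \too \Hom^\bullet(E',E)^*\otimes E \, . \]
Under the hypotheses $\hom(E',E)=\ext^1(E',E)=1$ and $\ext^2(E',E)=0$ (the latter coming for free from \hyref{Lemma}{lem:exceptional-pairs}), the Hom-complex $\Hom^\bullet(E',E)$ is just $\kk$ concentrated in degree $0$, so the triangle simplifies to
\[ \RR E' \too E' \xto{\,\varphi\,} E \, , \]
where $\varphi$ is a (the, up to scalar) non-zero morphism $E'\to E$. By assumption $\varphi$ is surjective, so in the abelian category $\lmod{\cA_t}$ it fits into a short exact sequence $0 \to K \to E' \xto{\varphi} E \to 0$ with $K \coloneqq \ker\varphi$. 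The plan is to identify the cocone of $\varphi$, i.e.\ $\RR E'$, with $K[1]$ shifted back — more precisely, to show that the distinguished triangle $\RR E' \to E' \to E$ is, up to isomorphism, the one coming from this short exact sequence, namely $K \to E' \to E \to K[1]$, so that $\RR E' \cong K$. That already gives half the statement; the extension $0 \to E \to \RR E' \to K \to 0$ requires a tiny bit more care about which triangle one rotates.

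Here is the order I would carry this out. First, record that $\Hom^\bullet(E',E)=\kk$ in degree $0$: combine $\hom(E',E)=1$, $\ext^1(E',E)=1$ is \emph{not} what kills the higher terms — rather, $\Hom^\bullet$ only sees $\Hom$ and $\Ext$, and we need $\ext^2(E',E)=0$, which is \hyref{Lemma}{lem:exceptional-pairs}; also $\Hom(E',E[i])=0$ for $i<0$ since both are modules. Wait — I must be careful: the mutation triangle uses $\Hom^\bullet(E',E)^\ast$, and if only $\Hom$ (degree $0$) survives then $\Hom^\bullet(E',E)^\ast\otimes E = E$, as claimed. (The $\ext^1$ hypothesis is not needed to simplify the triangle; it will re-enter to pin down $\RR E'$ as a genuine module, via the long exact sequence below.) Second, since a non-zero $\varphi\colon E'\to E$ is surjective with kernel $K$, form the short exact sequence $0\to K\to E'\xto{\varphi}E\to 0$ and its associated triangle $K\to E'\xto{\varphi}E\to K[1]$ in $\Db(\cA_t)$. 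Third, compare this triangle with the mutation triangle: both are triangles on the same morphism $\varphi\colon E'\to E$ (the mutation triangle is $\RR E'\to E'\xto{\varphi}E\to \RR E'[1]$, where the second map is the canonical one, which up to the scalar identification $\Hom^\bullet(E',E)^\ast\cong\kk$ is exactly $\varphi$). A morphism in a triangulated category determines its (co)cone up to (non-unique) isomorphism, so $\RR E'\cong K$.

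Finally, to obtain the \emph{extension} form $0\to E\to \RR E'\to K\to 0$: rotate the mutation triangle to $E[-1]\to \RR E'\to E'\xto{\varphi}E$, i.e.\ $\RR E'$ sits in the triangle $E[-1]\to \RR E'\to E'$. Hmm, that gives $\RR E'$ as an extension of $E'$ by $E[-1]$, which is not yet a module statement. Better: rotate the identification triangle $K\to E'\to E\to K[1]$ the other way to $E[-1]\to K\to E'\to E$; this shows the cocone of $\varphi$ is $K$ and re-proves $\RR E'\cong K$, but still does not directly display $E$ as a submodule of $\RR E'$. The cleanest route to the displayed short exact sequence is therefore: once $\RR E'\cong K$ is a module, invoke that $(E,\RR E')$ is an exceptional pair (standard for mutations) with — by \hyref{Corollary}{cor:hom} applied to the full exceptional sequence in which this pair sits — $\hom(E,\RR E')=\ext^1(E,\RR E')=1$ and $\ext^2=0$; but what we actually want is the \emph{universal} extension realizing the mutation. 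Concretely, I would instead argue directly on dimension vectors: $\udim(\RR E')=\udim(K)=\udim(E')-\udim(E)$, and separately the triangle $\RR E'\to E'\to E\to \RR E'[1]$ rotates to $E\to \RR E'[1]\to E'[1]$, i.e.\ there is a triangle $E\to \RR E'[1]\to E'[1]\to E[1]$; shifting, $E[-1]\to \RR E'\to E'\to E$. The connecting map $E'\to E$ of this last triangle is $\varphi$ itself (surjective), so the long exact cohomology sequence of this triangle — all three objects are modules once we know $\RR E'$ is — collapses to $0\to E\to \RR E'\to \ker\varphi\to 0$, which is exactly the claim with $\ker(E'\to E)=K$. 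I would present it in this order: simplify the triangle; produce the short exact sequence on $\varphi$; identify $\RR E'$ with $K$ up to isomorphism via uniqueness of cones; then rotate and take the long exact sequence to get the stated short exact sequence $0\to E\to \RR E'\to \ker(E'\to E)\to 0$.

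\textbf{Main obstacle.} The only real subtlety is bookkeeping with the shift and the direction of the triangle: the paper's nonstandard convention ($\RR E'$ is the \emph{cocone}, not the cone) means one must be careful that the displayed extension has $E$ as a \emph{sub}object and $\ker(E'\to E)$ as the quotient, not the other way round. Making sure the connecting morphism in the rotated triangle really is the given surjection $\varphi$ (and not, say, its shift or a zero map) is where I would spend the most care; everything else is a one-line consequence of $\Hom^\bullet(E',E)=\kk$ and the long exact sequence.
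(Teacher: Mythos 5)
There is a genuine error at the very first step, and it propagates through the whole argument. You claim that the Hom-complex $\Hom^\bullet(E',E)$ is ``just $\kk$ concentrated in degree $0$'' and that ``the $\ext^1$ hypothesis is not needed to simplify the triangle.'' This is false: by the paper's convention $\Hom^\bullet(E',E)=\bigoplus_i\Ext^i(E',E)[-i]$, so under the hypotheses it equals $\kk\oplus\kk[-1]$, and hence $\Hom^\bullet(E',E)^*\otimes E\cong E\oplus E[1]$. The mutation triangle is therefore $\RR E'\to E'\to E\oplus E[1]$, \emph{not} $\RR E'\to E'\to E$. The degree-$1$ summand is exactly what produces the subobject $E\into\RR E'$ in the statement; dropping it gives $\RR E'\cong\ker\varphi$, which is inconsistent with the lemma already on the level of dimension vectors ($\udim(\ker\varphi)=\udim(E')-\udim(E)$, whereas the claimed extension forces $\udim(\RR E')=\udim(E')$; equivalently, $\chi(E',E)=1-1+0=0$, so $[\RR E']=[E']$ in $K_0$). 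Your own text betrays the inconsistency: having ``proved'' $\RR E'\cong K$, the final short exact sequence $0\to E\to\RR E'\to K\to 0$ would read $0\to E\to K\to K\to 0$. The last rotation step is also incorrect as stated: the long exact cohomology sequence of the triangle $E[-1]\to\RR E'\to E'\xto{\varphi}E$ is $0\to H^0(\RR E')\to E'\to E\to H^1(\RR E')\to 0$, which again yields $H^0(\RR E')=\ker\varphi$ and not the asserted extension with $E$ as a subobject.

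The correct route (the paper's) is to keep the full complex: the triangle $\RR E'\to E'\to E\oplus E[1]$ has long exact cohomology sequence
\[ 0 \to \Ext^1(E',E)^*\otimes E \to H^0(\RR E') \to E' \xto{\ \varepsilon\ } \Hom(E',E)^*\otimes E \to H^1(\RR E') \to 0, \]
which under the dimension hypotheses becomes $0\to E\to H^0(\RR E')\to E'\xto{\varepsilon}E\to H^1(\RR E')\to 0$ (and $H^{<0}(\RR E')=0$). Surjectivity of the non-zero map $E'\to E$ kills $H^1(\RR E')$, so $\RR E'$ is a module sitting in $0\to E\to\RR E'\to\ker\varepsilon\to 0$. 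Your instinct that the $\ext^1$ hypothesis must ``re-enter'' somewhere was right, but it enters at the level of the triangle itself, not afterwards; once that is fixed, the rest of your outline (use $\Ext^2(E',E)=0$ from Lemma~\ref{lem:exceptional-pairs}, then run the long exact sequence) is exactly the intended argument.
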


\begin{proof}
Using $\Ext^2(E',E)=0$ from \hyref{Lemma}{lem:exceptional-pairs}, the triangle defining $\RR E'$ yields the following exact sequence of $\cA_t$-modules:
\begin{align*}
  0 \to \Ext^1(E',E)^* \otimes E \to H^0(\RR E') \to E' \xxto{\varepsilon}
        \Hom(E',E)^* \otimes E \to H^1(\RR E') \to 0 \, . 
\end{align*}
The assumption $\Hom(E',E) = \Ext^1(E',E) = \kk$ simplifies the exact sequence to
  $0 \to E \to H^0(\RR E') \to E' \xxto{\varepsilon} E \to H^1(\RR E') \to 0$.
The other assumption implies that the evaluation morphism $\varepsilon$ is surjective, so that $H^1(\RR E')=0$. Hence $\RR E'$ is indeed a module, sitting in the stated extension.
\end{proof}

\hyref{Lemma}{lem:right-mutation} applies to full exceptional sequences $\cE=(E^1,\ldots,E^t)$ of modules, because $\hom(E^i,E^j)=\ext^1(E^i,E^j)=1$ for all $i\leq j$ by \hyref{Corollary}{cor:hom}. For adjacent modules, we have the

\begin{lemma} \label{lem:injective-surjective}
  Let $\cE = (E^1,\ldots,E^t)$ be a full exceptional sequence of $\cA_t$-modules.
  Then non-zero morphisms $E^i\to E^{i+1}$ are either injective or surjective.
\end{lemma}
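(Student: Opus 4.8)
The plan is to make the unique nonzero morphism $f\colon E^i\to E^{i+1}$ completely explicit and then read off its kernel and cokernel directly.

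By \hyref{Theorem}{thm:module-classification} both $E^i$ and $E^{i+1}$ are worms, i.e.\ thin indecomposable modules of rank $1$; say they are supported on the intervals $[a,t]$ and $[b,t]$, where $a=t+1-\dim E^i$ and $b=t+1-\dim E^{i+1}$. By \hyref{Corollary}{cor:hom} we have $\hom(E^i,E^{i+1})=1$, and the argument in the proof of \hyref{Proposition}{prop:worm-diagrams} identifies this nonzero map up to scalar: regarding the two worms as words in $\alpha$ and $\beta$, the morphism $f$ is multiplication by a fixed nonzero scalar on the longest terminal segment $[p,t]$ on which $E^i$ and $E^{i+1}$ carry the same arrows, and it is zero at every vertex $<p$. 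In particular $f_t\neq0$ and $p\geq\max(a,b)$. Hence $\image f$ is the truncation of $E^{i+1}$ to $[p,t]$, while $\ker f$ and $\coker f$ are the truncations of $E^i$ and $E^{i+1}$ to $[a,p-1]$ and $[b,p-1]$ -- again worm modules, with $\ker f$ vanishing exactly when $p=a$ and $\coker f$ vanishing exactly when $p=b$. Thus the whole statement reduces to the claim $p=\max(a,b)$, i.e.\ that the shorter of the two worms occurs as a terminal sub-word of the longer one.

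Suppose this fails. Reading from vertex $t$ towards vertex $1$, let $k\geq\max(a,b)$ be the first position at which $E^i$ and $E^{i+1}$ have different arrows; then $f_{k+1}$ equals the nonzero scalar. If $E^i$ has arrow $\alpha$ at $k$ and $E^{i+1}$ has arrow $\beta$ at $k$, then $\alpha^{E^i}_k$ is an isomorphism and $\alpha^{E^{i+1}}_k=0$, so the $\alpha$-naturality square of $f$ at vertex $k$, namely $f_{k+1}\,\alpha^{E^i}_k=\alpha^{E^{i+1}}_k f_k$, forces $f_{k+1}=0$, which is absurd. In the opposite case ($\beta$ for $E^i$, $\alpha$ for $E^{i+1}$) naturality yields no contradiction, and we argue in the worm diagram attached to $\cE$ by \hyref{Corollary}{cor:worm-diagrams}: by the explicit construction in the proof of \hyref{Theorem}{thm:exceptional-bijections}, the consecutive members $E^i$ and $E^{i+1}$ of $\cE$ are precisely the two worms whose vertex-$t$ ends occupy neighbouring lattice points on the hypotenuse of the defining triangle. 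Since the two worms agree in all arrows at positions $>k$, their vertex-$(k{+}1)$ points are still neighbours along a line of slope $-1$; the vertical edge of $E^i$ and the horizontal edge of $E^{i+1}$ joining vertex $k+1$ to vertex $k$ then carry both of these points to one and the same lattice point, contradicting the vertex-disjointness of distinct worms in a worm diagram. (If the hypotenuse is oriented the other way, the two cases exchange roles: one is still excluded by naturality, the other by vertex-disjointness.) Therefore no such $k$ exists, $p=\max(a,b)$, and $f$ is injective when $\dim E^i\leq\dim E^{i+1}$ and surjective otherwise.

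The essential obstacle is this last, geometric step: one has to extract from the worm-diagram construction in \hyref{Theorem}{thm:exceptional-bijections} that successive worms of $\cE$ sit on adjacent cells of the boundary diagonal, and to keep precise track of which of the two divergence patterns at $k$ produces the lattice-point collision (the other being the one ruled out by naturality of $f$). The degenerate cases $\dim E^i=1$ or $\dim E^{i+1}=1$ need nothing extra, since then $\max(a,b)=t=p$ already.
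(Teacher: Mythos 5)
Your proof is correct and follows essentially the same route as the paper's: the paper simply asserts that in the worm diagram adjacent worms run in parallel from the diagonal until one of them stops, so that the shorter worm is a terminal subword of the longer and the (unique, $f_t\neq0$) morphism is the evident inclusion or projection, while you additionally justify this parallel-running claim (naturality of $f$ excludes the divergence pattern with $\alpha$ on $E^i$ and $\beta$ on $E^{i+1}$, and vertex-disjointness of the worm diagram excludes the other). The only slip is the closing parenthetical: if the hypotenuse were oriented the other way, the lattice collision would occur in the \emph{same} divergence case that naturality already rules out, leaving the remaining case uncovered --- but since the paper's conventions place the vertex-$t$ end of $E^{i+1}$ one step to the lower-right of that of $E^i$, the orientation you treat in the main text is the one that actually occurs, so the argument stands.
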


\begin{proof}
This follows from the equivalence of worm diagrams and full exceptional sequences of modules, \hyref{Corollary}{cor:worm-diagrams}: walking along adjacent worms $E^i,E^{i+1}$ from the diagonal towards the origin of the worm diagram, they run in parallel until one of them stops. If $E^i$ stops first, i.e.\ $\dim E^i<\dim E^{i+1}$, then it embeds into $E^{i+1}$. Whereas if $E^{i+1}$ stops first, i.e.\ $\dim E^i>\dim E^{i+1}$, then $E^i$ surjects onto it.
\end{proof}

\begin{corollary} \label{cor:right-mutations}
Let $\cE = (E^1,\ldots,E^t) \in \mExc{t}$ and $i\in\{1,\ldots,t-1\}$. Then the following conditions are equivalent:
\begin{enumerate}
\item $\RR_i\cE \in \mExc{t}$.
\item there exists a surjection $E^i \onto E^{i+1}$.
\item the worm $E^i$ is longer than the worm $E^{i+1}$.
\end{enumerate}
\end{corollary}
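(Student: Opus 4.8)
The plan is to prove the chain of equivalences $(1) \Leftrightarrow (2) \Leftrightarrow (3)$, where the implications $(2) \Leftrightarrow (3)$ are essentially combinatorial bookkeeping about worms, while $(1) \Leftrightarrow (2)$ is where the genuine content lies. Throughout I would use \hyref{Corollary}{cor:hom} (so that $\hom(E^i,E^{i+1}) = \ext^1(E^i,E^{i+1}) = 1$ for the adjacent pair) and \hyref{Lemma}{lem:injective-surjective} (a non-zero map $E^i \to E^{i+1}$ is injective or surjective).

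\emph{Equivalence $(2) \Leftrightarrow (3)$.} This is the easy part. By \hyref{Corollary}{cor:worm-diagrams}, $\cE$ corresponds to a worm diagram, and by \hyref{Lemma}{lem:injective-surjective}'s proof, the adjacent worms $E^i, E^{i+1}$ run in parallel (starting from the boundary diagonal towards the origin) until the shorter one terminates. Since every exceptional module has rank $1$, i.e.\ is supported at vertex $t$ with $\dim = \dim E^i$, a non-zero morphism $E^i \to E^{i+1}$ is surjective precisely when $\dim E^i \geq \dim E^{i+1}$; and by exceptionality these lengths differ, so surjectivity is equivalent to $\dim E^i > \dim E^{i+1}$, which is condition (3). (If $\dim E^i < \dim E^{i+1}$ the map is a proper injection, not surjective.) So this step is just unwinding the worm picture.

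\emph{Equivalence $(1) \Leftrightarrow (2)$.} For $(2) \Rightarrow (1)$: if $E^i \onto E^{i+1}$, then \hyref{Lemma}{lem:right-mutation} applies verbatim to the exceptional pair $(E^i, E^{i+1})$, and it tells us that $\RR E^i$ fits in the short exact sequence $0 \to E^{i+1} \to \RR E^i \to \ker(E^i \to E^{i+1}) \to 0$, so $\RR E^i$ is an honest module; hence $\RR_i\cE$ consists of modules, i.e.\ $\RR_i\cE \in \mExc{t}$. For $(1) \Rightarrow (2)$, I would argue by contradiction using \hyref{Lemma}{lem:injective-surjective}: if there is no surjection $E^i \onto E^{i+1}$, then the non-zero morphism $E^i \to E^{i+1}$ is a \emph{proper} injection (it is non-zero because $\hom(E^i,E^{i+1}) = 1$). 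Then in the defining triangle $\RR E^i \to E^i \xto{\eval} \Hom^\bullet(E^i,E^{i+1})^* \otimes E^{i+1} = E^{i+1}$, the evaluation map $\eval$ is the injection $E^i \hookrightarrow E^{i+1}$, which is not surjective; running the same long-exact-sequence computation as in the proof of \hyref{Lemma}{lem:right-mutation} (using $\Ext^2(E^i, E^{i+1}) = 0$ from \hyref{Lemma}{lem:exceptional-pairs}), one gets $H^{-1}(\RR E^i) = \ker(\eval) = 0$, $H^0(\RR E^i) = \Ext^1(E^i,E^{i+1})^* \otimes E^{i+1} = E^{i+1}$, and $H^1(\RR E^i) = \coker(\eval) \neq 0$. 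So $\RR E^i$ is a genuine two-term complex, not a module, and $\RR_i\cE \notin \mExc{t}$. (Alternatively, and perhaps more cleanly, one invokes that $\symm{t}$ acts simply transitively on $\mExc{t}$ by \hyref{Remark}{rem:action}, so $\RR_i\cE \in \mExc{t}$ forces $\RR_i$ to realize the simple transposition $\tau_i$ on start permutations, and one checks via \hyref{Lemma}{lem:permutations}(2) that this descent in $f$ happens exactly when $E^i$ is the longer worm.)

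\emph{Main obstacle.} The one place to be careful is the direction $(1) \Rightarrow (2)$: a priori $\RR_i\cE$ could be a sequence of modules even though $\RR E^i$ is computed from a proper injection, if cancellation in the triangle conspired to kill the cokernel term. The argument above rules this out because $\coker(E^i \hookrightarrow E^{i+1})$ is genuinely non-zero (the two worms have different lengths, both of rank $1$), and it sits in cohomological degree $+1$ with nothing to cancel against since $\Ext^2 = 0$. So the key input is really \hyref{Lemma}{lem:exceptional-pairs} together with the length-difference fact from the worm combinatorics; once those are in hand, the implication is forced.
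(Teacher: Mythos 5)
Your proposal is correct and follows exactly the route the paper intends: the paper states this corollary without proof, as an immediate consequence of \hyref{Lemma}{lem:right-mutation}, \hyref{Lemma}{lem:injective-surjective} and \hyref{Corollary}{cor:hom}, and your argument assembles precisely these ingredients (including the one non-obvious direction $(1)\Rightarrow(2)$, where you correctly observe that a proper injection forces $H^1(\RR E^i)=\coker\neq 0$). The only quibble is notational: $\Hom^\bullet(E^i,E^{i+1})^*\otimes E^{i+1}$ is $E^{i+1}\oplus E^{i+1}[1]$ rather than $E^{i+1}$, but your subsequent cohomology computation accounts for both summands correctly.
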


 Clause (3) of the following statement says that right mutations $\RR_i$ categorify right multiplications by $\tau_i$.

\begin{proposition} \label{prop:mutating-sequences}
  Let $\cE = (E^1,\ldots,E^t)$ be a full exceptional sequence of $\cA_t$-modules with $f(\cE)\geq1$.
  If $E^i$ is longer than $E^{i+1}$, then
  \begin{enumerate}
    \item $\RR_i\cE$ is a full exceptional sequence of $\cA_t$-modules,
    \item $f(\RR_i\cE) = f(\cE)-1$,
    \item $\sigma(\RR_i\cE) = \sigma(\cE)\cdot\tau_i$.
  \end{enumerate}
\end{proposition}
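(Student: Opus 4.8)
The plan is to reduce all three claims to results already in place. \emph{Claim (1)} is immediate: the hypothesis ``$E^i$ is longer than $E^{i+1}$'' is exactly condition~(3) of \hyref{Corollary}{cor:right-mutations}, so that corollary gives $\RR_i\cE\in\mExc{t}$; and $\RR_i\cE$ is a full exceptional sequence in $\Db(\cA_t)$ by the general mutation formalism recalled before \hyref{Lemma}{lem:right-mutation}.

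For \emph{claim (3)}, I would first check that \hyref{Lemma}{lem:right-mutation} applies to the adjacent pair $(E^i,E^{i+1})$: by \hyref{Corollary}{cor:hom} one has $\hom(E^i,E^{i+1})=\ext^1(E^i,E^{i+1})=1$, and by \hyref{Lemma}{lem:injective-surjective} a non-zero morphism $E^i\to E^{i+1}$ is injective or surjective; it cannot be injective since $\dim E^i>\dim E^{i+1}$, hence it is surjective. \hyref{Lemma}{lem:right-mutation} then yields the short exact sequence $0\to E^{i+1}\to\RR E^i\to\ker(E^i\onto E^{i+1})\to0$, whence $\dim\RR E^i=\dim E^{i+1}+(\dim E^i-\dim E^{i+1})=\dim E^i$, i.e.\ $\sigma(\RR E^i)=\sigma(E^i)$. (Equivalently one could note $[\RR E^i]=[E^i]-\chi(E^i,E^{i+1})\,[E^{i+1}]=[E^i]$ in $K_0$, using $\chi(E^i,E^{i+1})=0$ from \hyref{Corollary}{cor:hom}.) Since $\RR_i\cE=(E^1,\ldots,E^{i-1},E^{i+1},\RR E^i,E^{i+2},\ldots,E^t)$, the start permutation of $\RR_i\cE$ agrees with $\sigma(\cE)$ outside positions $i,i+1$, while in those two positions it takes the values $\sigma(E^{i+1})=\sigma(\cE)(i+1)$ and $\sigma(\RR E^i)=\sigma(E^i)=\sigma(\cE)(i)$ respectively; that is precisely $\sigma(\cE)\cdot\tau_i$.

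For \emph{claim (2)}, I would invoke \hyref{Lemma}{lem:permutations}\,(3): the number $f$ of a full exceptional sequence of modules equals the number of pairs $k<l$ with $\sigma(\cE)(k)<\sigma(\cE)(l)$, i.e.\ the number of non-inversions of its start permutation. By (3) the start permutation of $\RR_i\cE$ is $\sigma(\cE)\tau_i$, obtained from $\sigma(\cE)$ by interchanging the entries in positions $i$ and $i+1$; this toggles the inversion status of the single pair $(i,i+1)$ and merely permutes the pairs $\{(k,i),(k,i+1)\}$ for $k<i$, and $\{(i,l),(i+1,l)\}$ for $l>i+1$, among themselves, leaving all other statuses unchanged. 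Since $\dim E^i>\dim E^{i+1}$ forces $\sigma(\cE)(i)<\sigma(\cE)(i+1)$, the pair $(i,i+1)$ is a non-inversion for $\cE$ and an inversion for $\RR_i\cE$, hence $f(\RR_i\cE)=f(\cE)-1$. (In particular $f(\cE)\geq1$ is automatic once $E^i$ is longer than $E^{i+1}$, so the hypothesis $f(\cE)\geq1$ is only there to make clause (2) meaningful.)

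I do not expect any serious obstacle: the substance of the proposition is carried by \hyref{Corollary}{cor:right-mutations}, \hyref{Lemma}{lem:right-mutation} and \hyref{Lemma}{lem:permutations}. The only place calling for care is the bookkeeping in (3) --- obtaining \emph{right} multiplication by $\tau_i$ rather than left, which rests on the facts that $\RR_i$ reshuffles the positions of the sequence and that $\dim\RR E^i=\dim E^i$ --- together with the elementary verification in (2) that right multiplication of a permutation by an adjacent transposition changes exactly one inversion.
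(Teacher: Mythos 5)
Your proposal is correct, and claims (1) and (3) are handled essentially as in the paper: (1) is quoted from \hyref{Corollary}{cor:right-mutations}, and for (3) you justify the swap of values at positions $i,i+1$ by showing $\dim\RR E^i=\dim E^i$ via the short exact sequence of \hyref{Lemma}{lem:right-mutation} (or the $K_0$-computation) --- a detail the paper leaves implicit, since its proof of (2) already exhibits $\RR E^i$ as a worm occupying the same cells as $E^i$. Where you genuinely diverge is in (2): the paper argues directly on worm shapes, identifying the vertical edge $(E^i)_{j-1}\xxfrom{\beta}(E^i)_j$ at $j=\sigma(E^{i+1})$ that the extension $0\to E^{i+1}\to\RR E^i\to\ker(E^i\onto E^{i+1})\to0$ converts into an $\alpha$-edge, whence exactly one $\beta$-map disappears. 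You instead deduce (2) from (3) together with \hyref{Lemma}{lem:permutations}\,(3), by the elementary observation that right multiplication by $\tau_i$ toggles the inversion status of the single pair $(i,i+1)$, which is a non-inversion for $\cE$ because $\dim E^i>\dim E^{i+1}$. Both arguments are sound; yours is cleaner and makes the redundancy of the hypothesis $f(\cE)\geq1$ explicit, while the paper's has the virtue of describing concretely how the mutated worm differs from $E^i$ (information that is reused, e.g., in the proof of \hyref{Theorem}{thm:mutating-to-deltas}). Your argument does presuppose that \hyref{Lemma}{lem:permutations}\,(3) may be applied to $\RR_i\cE$, which is legitimate since (1) guarantees $\RR_i\cE\in\mExc{t}$.
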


\begin{proof}[Proof of \hyref{Proposition}{prop:mutating-sequences}]
By $f(\cE)\geq1$, there exists an $i$ such that $E^i\onto E^{i+1}$. Then (1) is the content of \hyref{Corollary}{cor:right-mutations}.

On (2): $\RR_i$ modifies only the module $E^i$, so we compare this module to $\RR E^i$. Now $E^i$ surjecting onto $E^{i+1}$ means that the worm $E^{i+1}$ sits as a copy in the worm $E^i$, i.e.\ the representations have the same $\alpha/\beta$ maps in degrees $j,\ldots,t$. Let $j \coloneqq \sigma(E^{i+1}) < \sigma(E^j)$ be the starting vertex of the shorter worm. Then $(E^i)_{j-1} \xxfrom{\beta} (E^i)_j$ is necessarily a vertical edge. In the extension defining $\RR E^i$, this map is changed to $\alpha$, i.e.\ $(\RR E^i)_{j-1} \xxto{\alpha} (\RR E^i)_j$.
Hence $f(\RR_i\cE) = f(\cE) - 1$.

On (3): $\sigma(\RR_i\cE)$ has the same values as $\sigma(\cE)$, with $\sigma(\cE)(i)$ and $\sigma(\cE)(i+1)$ interchanged. This amounts to pre-composition $\sigma(\RR_i\cE) = \sigma(\cE)\cdot\tau_i$.
\end{proof}

We are ready to show that right mutations can transform any exceptional sequence of modules into the standard sequence $\Delta$. 

\begin{theorem} \label{thm:mutating-to-deltas}
Let $\cE$ be a full exceptional sequence of $\cA_t$-modules with $f(\cE)\geq1$ and write its length permutation $\lambda(\cE) = \tau_{i(f)}\cdots\tau_{i(1)}$ as a minimal product of simple transpositions.
Setting recursively $\cE_0 \coloneqq \cE$ and $\cE_j \coloneqq \RR_{i(j)}\cE_{j-1}$, each $\cE_j$ is a full exceptional sequence of $\cA_t$-modules, and $\cE_f = \Delta$.   
\end{theorem}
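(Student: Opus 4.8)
The plan is a direct induction on $j$, with \hyref{Proposition}{prop:mutating-sequences} carrying each step; the only thing to arrange beforehand is the dictionary between the Coxeter length of $\lambda(\cE)$ and the edge count $f(\cE)$. From $\lambda(\cF) = \omega\cdot\sigma(\cF)$ (\hyref{Lemma}{lem:permutations}(1)) and $\omega(k) = t+1-k$ one obtains, for $i<j$, the equivalence $\lambda(\cF)(i) > \lambda(\cF)(j) \Leftrightarrow \sigma(\cF)(i) < \sigma(\cF)(j)$; combined with \hyref{Lemma}{lem:permutations}(3) this shows $f(\cF) = \ell(\lambda(\cF))$ (the number of inversions of the length permutation) for every $\cF\in\mExc{t}$. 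In particular a minimal product $\lambda(\cE) = \tau_{i(f)}\cdots\tau_{i(1)}$ has exactly $f = f(\cE)$ factors, so the recursion $\cE_j \coloneqq \RR_{i(j)}\cE_{j-1}$ indeed runs over $j=1,\ldots,f$. Finally, $\omega^2=\id$ turns the formula $\sigma(\RR_i\cF) = \sigma(\cF)\cdot\tau_i$ of \hyref{Proposition}{prop:mutating-sequences}(3) into $\lambda(\RR_i\cF) = \lambda(\cF)\cdot\tau_i$ whenever that proposition applies.

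First I would fix the inductive invariant for $0\leq j\leq f$: $\cE_j$ is a full exceptional sequence of $\cA_t$-modules with $\lambda(\cE_j) = \tau_{i(f)}\cdots\tau_{i(j+1)}$; this word is an initial segment of the reduced word $\lambda(\cE) = \tau_{i(f)}\cdots\tau_{i(1)}$, hence itself reduced, so $f(\cE_j) = \ell(\lambda(\cE_j)) = f-j$. The case $j=0$ is the hypothesis on $\cE_0=\cE$. For the step from $j-1$ to $j$, with $1\leq j\leq f$, the key combinatorial fact is that the last letter of a reduced word is a right descent of the element it represents: applied to $\lambda(\cE_{j-1}) = \tau_{i(f)}\cdots\tau_{i(j)}$, whose last letter is $\tau_{i(j)}$, it gives $\lambda(\cE_{j-1})(i(j)) > \lambda(\cE_{j-1})(i(j)+1)$, i.e.\ $\dim E^{i(j)} > \dim E^{i(j)+1}$ in $\cE_{j-1}$. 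By \hyref{Corollary}{cor:right-mutations} the worm $E^{i(j)}$ is then longer than $E^{i(j)+1}$, and since $f(\cE_{j-1}) = f-j+1\geq 1$, \hyref{Proposition}{prop:mutating-sequences} applies: $\cE_j = \RR_{i(j)}\cE_{j-1}$ is again a full exceptional sequence of modules, $f(\cE_j) = f(\cE_{j-1})-1$, and $\lambda(\cE_j) = \lambda(\cE_{j-1})\cdot\tau_{i(j)} = \tau_{i(f)}\cdots\tau_{i(j+1)}$ (the trailing $\tau_{i(j)}$ cancels the multiplier). This is the invariant for $j$, so the induction closes.

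It remains to read off $j=f$: now $\lambda(\cE_f)$ is the empty product, i.e.\ $\id$, so $\sigma(\cE_f) = \omega\cdot\lambda(\cE_f) = \omega = \sigma(\Delta)$. Since assigning the start permutation is a bijection between full exceptional sequences of $\cA_t$-modules and $\symm{t}$ (\hyref{Theorem}{thm:exceptional-bijections}), we conclude $\cE_f = \Delta$, which is the assertion.

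I do not expect a genuine obstacle here: the mathematical substance sits entirely in \hyref{Proposition}{prop:mutating-sequences} and in the bijections of \hyref{Theorem}{thm:exceptional-bijections}. The one point that must be handled with care is \emph{why} \hyref{Proposition}{prop:mutating-sequences} applies at every step, and this is precisely where minimality of the word $\lambda(\cE) = \tau_{i(f)}\cdots\tau_{i(1)}$ is needed: for a non-reduced expression some intermediate $\tau_{i(j)}$ need not be a right descent of $\lambda(\cE_{j-1})$, the surjection $E^{i(j)}\onto E^{i(j)+1}$ demanded by \hyref{Corollary}{cor:right-mutations} would then fail to exist, and $\RR_{i(j)}\cE_{j-1}$ could leave $\mExc{t}$, breaking the recursion.
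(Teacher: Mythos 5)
Your proof is correct and follows essentially the same route as the paper's: both rest on \hyref{Proposition}{prop:mutating-sequences} together with the identification of (right) descents of the permutation with surjections $E^i\onto E^{i+1}$ between adjacent worms. If anything, your induction on the reduced word --- using $f(\cF)=\ell(\lambda(\cF))$ and the fact that the last letter of a reduced word is a right descent --- addresses the ``arbitrary minimal decomposition'' phrasing of the statement more directly than the paper, which constructs one descending chain of mutations to $\Delta$ and then identifies the resulting word with $\lambda(\cE)$.
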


\begin{proof}
The basic step to mutate $\cE$ towards $\Delta$ is this equivalence: for $1\leq i<t$,
\[ f(\sigma(\cE)\cdot\tau_i) = f(\cE)-1 \iff E^i\onto E^{i+1} \]
The two worm diagrams $\sigma(\cE) = (E^1,\ldots,E^t)$ and $\sigma(\cE)\cdot\tau_i \eqqcolon (F^1,\ldots,F^t)$ are identical, except for the pairs $(E^i,E^{i+1})$ and $(F^i,F^{i+1})$. Each pair occupies the same space in the worm diagram grid. The shorter worms have the same form, whereas the longer worms differ. There is one more vertical edge if the longer worms comes first. Since $\dim E^i>\dim E^{i+1}$ translates to $E^i\onto E^{i+1}$ by \hyref{Lemma}{lem:injective-surjective}, the equivalence is established.

By \hyref{Proposition}{prop:mutating-sequences}, any surjection $E^i\onto E^{i+1}$ gives rise to the mutated exceptional sequence $\RR_i\cE$ of modules with one less vertical edge. In this way we can proceed to obtain a sequence of mutations $\RR_{i(f)}\cdots\RR_{i(1)}\cE = \Delta$ ending in the standard sequence with $f(\Delta)=0$. The number of mutations is $f = f(\cE)$. Then
\begin{align*}
 \omega &= \sigma(\Delta) = \sigma(\RR_{i(f)}\cdots\RR_{i(1)}\cE)
         = \sigma(\cE)\cdot\tau_{i(1)}\cdots\tau_{i(f)} ,
\end{align*}
using $\sigma(\RR_i\cE) = \sigma(\cE)\cdot\tau_i$ from \hyref{Proposition}{prop:mutating-sequences}.
Hence $\tau_{i(f)}\cdots\tau_{i(1)} = \omega\sigma(\cE) = \lambda(\cE)$.

These two arguments combine to the statement of the theorem.
\end{proof}

\begin{example} \label{ex:t=3-mutations}
  We illustrate right mutations with our running example $t=3$.
  Below each worm diagram, we show its start permutation.
\begin{center}
\begin{tikzpicture}
  \newcommand*{\hh}{1.7} 
    \thorm{ 0}{\hh}{123}{{0,0,0,1,0,2, 1,0,1,1, 2,0}}
    \thorm{-3}{\hh}{213}{{0,0,1,0,1,1, 0,1,0,2, 2,0}}
    \thorm{ 3}{\hh}{132}{{0,0,0,1,0,2, 1,0,2,0, 1,1}}
    \thorm{-3}{0}{231}{{0,0,1,0,2,0, 0,1,0,2, 1,1}}
    \thorm{ 3}{0}{312}{{0,0,0,1,1,1, 1,0,2,0, 0,2}}
    \thorm{ 0}{0}{321}{{0,0,1,0,2,0, 0,1,1,1, 0,2}}
    \tikzset{R1/.style={->,thick,orange}}    
    \tikzset{R2/.style={->,thick,magenta}}
    \tikzset{shortb <>/.style={ shorten >=#1, shorten <=#1 } }     
    \tikzset{shorts <>/.style={ shorten >=#1*0.5, shorten <=#1 } } 
    \tikzset{shorte <>/.style={ shorten >=#1, shorten <=#1*0.5 } } 
    \newcommand*{\tikzl}{0.6cm}
    \newcommand*{\tikzh}{\tikzl*0.5}
    \draw[R1,shorts <>=\tikzl] (123) -- (213) node[pos=0.55,above] {$\RR_1$};
    \draw[R2,shorte <>=\tikzl] (123) -- (132) node[pos=0.45,above] {$\RR_2$};
    \draw[R2,shortb <>=\tikzl] (213) .. controls +(left:3cm+\tikzl) and +(left:3cm+\tikzl) .. (231) node[midway,left] {$\RR_2~$};
    \draw[R1,shortb <>=\tikzh] (132) .. controls +(right:3cm+\tikzh) and +(right:3cm+\tikzh) .. (312) node[midway,right] {$\RR_1$};
    \draw[R1,shorte <>=\tikzl] (231) -- (321) node[pos=0.45,below] {$\RR_1$};
    \draw[R2,shorts <>=\tikzl] (312) -- (321) node[pos=0.55,below] {$\RR_2$};
\end{tikzpicture}
\end{center}
\end{example}

\subsection{Braid group action from spherical twists} \label{sub:braids-twisting}
Now we turn to the left action of $\symm{t}$ on $\mExc{t}$.
We use spherical twists functors (see \hyref{Appendix}{app:spherical} for details) along the simple modules of rank 0, for which we introduce shorthand notation:
\[ \TTT_i \coloneqq \TTT_{S(i)} \colon \Db(\cA_t) \isom \Db(\cA_t) \qquad\text{for } i=1,\ldots,t-1.\]
Since the $\TTT_i$ are autoequivalences, out of any exceptional sequence $\cE=(E^1,\ldots,E^t)$ in $\Db(\cA_t)$, we get another one: $\TTT_i\cE \coloneqq (\TTT_i(E^1),\ldots,\TTT_i(E^t))$. In particular, each twist yields a bijection $\TTT_i\colon \Exc{t} \isom \Exc{t}$.
Spherical twists satisfy the braid relations, 
  $\TTT_i\TTT_{i+1}\TTT_i \cE \cong \TTT_{i+1}\TTT_i\TTT_{i+1} \cE$,
leading to an action $\braid{t} \to \Exc{t}$.

In the following counterpart to \hyref{Theorem}{thm:mutating-to-deltas}, again a permutation is decomposed into simple transpositions, but instead of $\lambda(\cE) = \omega\sigma(\cE)$ it is now $\omega\sigma(\cE)^{-1}$.

\begin{theorem} \label{thm:twisting-worm-diagrams}
Let $\cE=(E^1,\ldots,E^t)$ be a full exceptional sequence of $\cA_t$-modules,
and write the permutation $\omega\sigma(\cE)^{-1} = \tau_{i(f)} \cdots \tau_{i(2)} \tau_{i(1)}$ as a minimal product of simple transpositions. Setting recursively $\cE_0 \coloneqq \cE$ and $\cE_j \coloneqq \TTT_{i(j)}(\cE_{j-1})$, each $\cE_j$ is a full exceptional sequence of $\cA_t$-modules, and $\cE_f = \Delta$.
\end{theorem}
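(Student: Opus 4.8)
The plan is to mirror the proof of \hyref{Theorem}{thm:mutating-to-deltas}, with the spherical twists $\TTT_i$ in place of the right mutations $\RR_i$ and left multiplication in place of right multiplication. Everything rests on the following analogue of \hyref{Proposition}{prop:mutating-sequences}.

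\medskip\noindent\emph{Twisting step.}
Let $\cE\in\mExc{t}$ and $i\in\{1,\dots,t-1\}$, and suppose the worm of $\cE$ of dimension $t+1-i$ occurs at an earlier position than the worm of dimension $t-i$; equivalently, the value $i$ precedes the value $i+1$ in the one-line notation of $\sigma(\cE)$, i.e.\ $\ell(\tau_i\sigma(\cE))=\ell(\sigma(\cE))+1$. Then $\TTT_i\cE\in\mExc{t}$, with $\sigma(\TTT_i\cE)=\tau_i\sigma(\cE)$ and $f(\TTT_i\cE)=f(\cE)-1$.

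\medskip
I would prove the twisting step in two moves. First a $K_0$-reduction: the twist triangle gives $[\TTT_iE]=[E]-\chi(S(i),E)[S(i)]$, and substituting $\udim S(i)=e_i$ and the interval dimension vectors of exceptional modules (\hyref{Theorem}{thm:module-classification}) into \hyref{Lemma}{lem:euler-pairing} yields $\chi(S(i),E^k)=[\sigma(E^k)=i]-[\sigma(E^k)=i+1]$. Hence $\udim\TTT_i(E^k)=\udim E^k$ for every worm but the two of dimensions $t+1-i$ and $t-i$, and those two exchange dimensions. So once $\TTT_i\cE$ is known to consist of modules, $\sigma(\TTT_i\cE)=\tau_i\sigma(\cE)$ is forced, and then $f(\TTT_i\cE)=\binom t2-\ell(\tau_i\sigma(\cE))=f(\cE)-1$ by \hyref{Lemma}{lem:permutations}(3) and the hypothesis. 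Second, the module property: by \hyref{Proposition}{prop:inequalities}(2) one has $\ext^2(S(i),E^k)=\hom(E^k,S(i))$ for all $k$ (as $\rk S(i)=0$), and the hypothesis translates --- this is the one genuinely combinatorial point, to be extracted from the shape of worm diagrams --- into the statement that $S(i)$ is a quotient of \emph{no} worm of $\cE$; hence $\ext^2(S(i),E^k)=0$ for every $k$. Computing $\RHom(S(i),E^k)$ from the complex $\cC(S(i),E^k)$ of \hyref{Proposition}{prop:ext-spaces}, with \hyref{Proposition}{prop:inequalities} bounding $\hom$ and $\ext^1$ by $1$, it is therefore one of $0$, $\kk$, $\kk[-1]$, or $\kk\oplus\kk[-1]$; in each case the twist triangle collapses to an honest short exact sequence of modules, so $\TTT_i(E^k)$ is again a worm: unchanged when $\RHom=0$; with its head at vertex $i$ deleted when $\RHom=\kk$; with a new head glued along a horizontal edge at vertex $i$ --- horizontal because $S(i)$ being the quotient forces the $\beta$ there to vanish --- when $\RHom=\kk[-1]$; and with the two edge-types at $i$ interchanged when $\RHom=\kk\oplus\kk[-1]$. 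Assembling the cases gives $\TTT_i\cE\in\mExc{t}$, completing the twisting step.

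Granting it, the theorem follows by the bookkeeping of \hyref{Theorem}{thm:mutating-to-deltas}. By \hyref{Lemma}{lem:permutations}(3), $\ell(\omega\sigma(\cE)^{-1})=\binom t2-\ell(\sigma(\cE))=f(\cE)=f$, so the reduced word $\omega\sigma(\cE)^{-1}=\tau_{i(f)}\cdots\tau_{i(1)}$ has exactly $f$ letters; put $w_j\coloneqq\tau_{i(j)}\cdots\tau_{i(1)}$. From $w_f\sigma(\cE)=\omega$ and $\ell(w_f)+\ell(\sigma(\cE))=f+(\binom t2-f)=\ell(\omega)$ one sees that concatenating reduced words for $w_f$ and for $\sigma(\cE)$ gives a reduced word for $\omega$, so every $w_j\sigma(\cE)$ is a contiguous subword of it and $\ell(w_j\sigma(\cE))=j+\ell(\sigma(\cE))$. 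Thus at each step the hypothesis of the twisting step is met for $\cE_{j-1}$ at the index $i(j)$, and inductively $\cE_j=\TTT_{i(j)}(\cE_{j-1})\in\mExc{t}$ with $\sigma(\cE_j)=w_j\sigma(\cE)$ and $f(\cE_j)=f-j$; for $j=f$ this gives $\sigma(\cE_f)=\omega\sigma(\cE)^{-1}\sigma(\cE)=\omega=\sigma(\Delta)$, whence $\cE_f=\Delta$ since $\sigma$ is a bijection (\hyref{Theorem}{thm:exceptional-bijections}).

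The main obstacle is the twisting step, and within it two things: the combinatorial fact that ``the worm of dimension $t+1-i$ precedes that of dimension $t-i$'' is equivalent to ``$S(i)$ is a quotient of no worm of $\cE$'', and --- granting that --- the verification that every twist triangle $\RHom(S(i),E^k)\otimes S(i)\to E^k\to\TTT_i(E^k)$ then degenerates to a short exact sequence of modules with $\TTT_i(E^k)$ a worm. The explicit complex $\cC$ and the inequalities of \hyref{Proposition}{prop:inequalities} turn the second point into finitely many local checks, in the spirit of the spherical twist computations of \hyref{Section}{sec:exceptionals}; the orientation of the glued head-edge and the boundary vertices $i\in\{1,t-1\}$ are where care is needed.
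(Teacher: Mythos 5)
Your plan is correct and follows essentially the same route as the paper: your ``twisting step'' is \hyref{Proposition}{prop:twisting-worms} with its hypothesis rephrased (via \hyref{Lemma}{lem:permutations}) as the length condition $\ell(\tau_i\sigma(\cE))=\ell(\sigma(\cE))+1$, your four cases for $\RHom(S(i),E^k)$ are exactly the paper's cases (O), (H), (E), (HE), and the concluding reduced-word bookkeeping is the paper's. The single point you defer --- that this length condition is equivalent to $\Ext^2(S(i),E^k)=0$ for all $k$, i.e.\ to the worm starting at $i$ doing so vertically --- is precisely what \hyref{Lemma}{lem:simple-to-exceptional}, \hyref{Corollary}{cor:twisting-sequences} and the displayed equivalence at the start of the paper's proof supply, so nothing is missing in substance.
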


Note that while right mutations of worm diagrams change the shape of exactly worm, a spherical twist may modify any number. Because of this, the combinatorial details become slightly more involved.

Fix an exceptional module $E$ and recall that $E$ is thin, i.e.\ has dimension vector $(0,\ldots,0,1,\ldots,1)$, possibly without any zeroes at the front. We think of $E$ as a worm crawling from the bottom left towards the top right, ending in the vertex $t$.

Recall that $E$ \emph{starts at $i$} if $\sigma(E)=i$, i.e.\ $E_i\neq0$ and $E_{i-1}=0$.
Also note that $\dim(E)$ is the length of the worm, hence 
  $E_i \neq 0 \iff \dim(E) \geq t+1-i \iff \sigma(E)\geq i$.
Let us introduce some graphical terminology:

\parbox{0.9\textwidth}{
\begin{itemize}
\item $E$ has a \emph{horizontal start at $i$} if $E_{i-1}=0$ and $E_i \xxto{\alpha} E_{i+1}$.
      \hfill \tikz[scale=0.3,thick]{\draw (0,0)--(1,0); \draw[fill] (0,0) circle (3pt); }
\item $E$ has a \emph{vertical start at $i$} if $E_{i-1}=0$ and $E_i \xxfrom{\beta} E_{ia+1}$.
      \hfill \tikz[scale=0.3,thick]{\draw (0,0)--(0,1); \draw[fill] (0,0) circle (3pt); }      
\item $E$ has a \emph{left hook at $i$} if $E_{i-1} \xxfrom{\beta} E_i \xxto{\alpha} E_{i+1}$.
      \hfill \tikz[scale=0.3,thick]{\draw (0,0)--(0,1)--(1,1); \draw[fill] (0,1) circle (3pt); }
\item $E$ has a \emph{right hook at $i$} if $E_{i-1} \xxto{\alpha} E_i \xxfrom{\beta} E_{i+1}$. 
      \hfill \tikz[scale=0.3,thick]{\draw (0,0)--(1,0)--(1,1); \draw[fill] (1,0) circle (3pt); }
\end{itemize}
}

\begin{lemma} \label{lem:simple-to-exceptional}
Let $E$ be an exceptional module, and let $S(i)$ be a simple module of rank 0, i.e.\ $i\in\{1,\ldots,t-1\}$.
Then $\dim\Ext^p(S(i),E)\leq1$ for all $p$, and
\begingroup
\addtolength{\jot}{-0.9ex}
\begin{align*}
\quad \Hom(S(i),E)   \neq 0 &\iff \text{$E$ has a vertical start or a right hook at $i$;} \\
      \Ext^2(S(i),E) \neq 0 &\iff \text{$E$ has a horizontal start or a left hook at $i$.} 
\intertext{If $\Ext^2(S(i),E) = 0$, then}
      \Ext^1(S(i),E) \neq 0 &\iff  \text{either $E$ starts at $i+1$ or $E$ has a right hook at $i$.}
\end{align*}
\endgroup
\end{lemma}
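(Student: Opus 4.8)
The plan is to compute $\Ext^p(S(i),E)$ directly from the complex $\cC(S(i),E)$ of \hyref{Proposition}{prop:ext-spaces}. Since $S(i)$ has $S(i)_i=\kk$ and all other vector spaces zero (and all structure maps zero), the complex $\cC(S(i),E)$ simplifies drastically: in degree $0$ only the summand $\Hom(S(i)_i,E_i)$ survives, in degree $1$ only $\Hom(S(i)_i,E_{i-1}) \oplus \Hom(S(i)_{i}, E_{i+1})$ (from the $g_{i}$ and $h_{i+1}$ slots, reading off the definition of $d^1$ carefully), and in degree $2$ only $\Hom(S(i)_i,E_i)$. So all three terms have dimension at most $1$, since $E$ is thin; in fact the degree-$0$ and degree-$2$ terms are $\kk$ iff $E_i\neq0$, i.e.\ iff $\sigma(E)\geq i$, and the degree-$1$ term has dimension $\dim E_{i-1}+\dim E_{i+1}\in\{0,1,2\}$. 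This immediately gives $\dim\Ext^p(S(i),E)\leq1$ for all $p$ except that we must rule out $\Ext^1$ having dimension $2$; that follows from the Euler characteristic computation below combined with the case analysis.

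Next I would identify the two differentials. The map $d^0\colon \Hom(S(i)_i,E_i)\to \Hom(S(i)_i,E_{i-1})\oplus\Hom(S(i)_i,E_{i+1})$ is, by the formula for $d^0$ in the excerpt, $f\mapsto(\beta^E_{i-1}f,\,-\alpha^E_i f)$ (the terms involving maps of $S(i)$ vanish). So $d^0$ is injective unless both $\beta^E_{i-1}=0$ and $\alpha^E_i=0$ on $E_i$; tracking which of $\alpha^E_i,\beta^E_{i-1}$ are nonzero is exactly the worm-combinatorics at vertex $i$. Similarly $d^1$ restricted to the surviving slots sends $(g,h)\mapsto \alpha^E_{i-1}g + \beta^E_i h\in\Hom(S(i)_i,E_i)$, so $\Ext^2(S(i),E)=\coker(d^1)\neq0$ iff $E_i\neq0$ and both $\alpha^E_{i-1}$ (acting into $E_i$ from $E_{i-1}$) and $\beta^E_i$ (acting into $E_i$ from $E_{i+1}$) are zero — i.e.\ no arrow points \emph{into} vertex $i$ in $E$. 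Translating: $\Hom(S(i),E)\neq0$ iff $E_i\neq 0$ and $d^0$ is not injective, which means no arrow emanates \emph{from} vertex $i$ toward $i-1$ and none from $i$ toward $i+1$ fails... more precisely, $\ker d^0\neq0$ iff $\alpha^E_i=0$ and $\beta^E_{i-1}=0$; combined with $E_i\neq0$ and the thinness of $E$, this is precisely "vertical start at $i$" ($E_{i-1}=0$, so $\beta^E_{i-1}=0$, and $E_i\xxfrom{\beta}E_{i+1}$, so $\alpha^E_i=0$) or "right hook at $i$" ($E_{i-1}\xxto{\alpha}E_i$, so $\alpha^E_i=0$, and $E_i\xxfrom{\beta}E_{i+1}$). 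The $\Ext^2$ statement is dual: $\coker d^1\neq0$ iff $E_i\neq0$ and nothing maps into $E_i$, which is "horizontal start at $i$" or "left hook at $i$". For the $\Ext^1$ clause, assuming $\Ext^2(S(i),E)=0$, I would use $\chi(S(i),E)=\hom-\ext^1+\ext^2 = \hom-\ext^1$, compute $\chi(S(i),E)$ from \hyref{Lemma}{lem:euler-pairing} with $\udim S(i)=e_i$, and combine with the already-determined value of $\hom(S(i),E)$ to read off $\ext^1(S(i),E)$; a short check of the remaining worm configurations at $i$ (now $E$ must have an arrow into vertex $i$, by the $\Ext^2$ vanishing) shows $\ext^1\neq0$ exactly when $E_{i+1}\neq0$ but $E_i=0$ (i.e.\ $E$ starts at $i+1$) or when $E$ has a right hook at $i$.

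The main obstacle I anticipate is bookkeeping the sign conventions and the precise indexing in the differentials $d^0,d^1$ — in particular making sure the "$g_i$" slot of $\cC^1$ lands in $\Hom(S(i)_i,N_{i-1})$ versus $\Hom(S(i)_i,N_{i+1})$, and correctly matching these to the arrows $\alpha^E,\beta^E$ of the \emph{target} module $E$. A secondary subtlety is the boundary behaviour when $i=1$ (no $E_{i-1}$ term, so "vertical start at $1$" should be read appropriately) and when $i=t-1$ (the vertex $t$ plays a special role, but since $i\leq t-1$ we have $E_{i+1}=E_t$ possibly nonzero; the asymmetry of $\cC$ at vertex $t$ does not bite because we never set $i=t$). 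Once the differentials are pinned down, everything reduces to enumerating the finitely many local shapes of a thin worm at vertex $i$ (horizontal start, vertical start, left hook, right hook, or "$E_i=0$ with $E_{i+1}\neq0$" etc.) and reading off kernels and cokernels — routine but needing care.
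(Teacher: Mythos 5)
Your proposal is correct and follows essentially the same route as the paper: compute $\Hom(S(i),E)$ by inspecting the representations, deduce the $\Ext^2$ criterion (the paper uses Serre duality $\Ext^2(S(i),E)\cong\Hom(E,S(i))^*$ where you take the cokernel of $d^1$ in $\cC(S(i),E)$ directly — an equivalent, equally short computation), and obtain $\Ext^1$ from the Euler pairing $\chi(S(i),E)=2e_i-e_{i-1}-e_{i+1}$ plus a case check on the local worm shape at $i$. The only loose end is that your bound $\ext^1(S(i),E)\leq 1$ is claimed unconditionally but argued only under $\Ext^2(S(i),E)=0$; the same Euler computation with $\ext^2=1$ in the two remaining configurations (horizontal start, left hook) closes this immediately.
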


\begin{proof}
Considering $S(i)$ and $E$ as representations computes $\Hom(S(i),E)$ readily, and likewise by Serre duality $\Ext^2(S(i),E) = \Hom(E,S(i))^*$, using that $S(i)$ is $2$-spherical according to \hyref{Proposition}{prop:CY-objects}:
\[ \begin{array}{l@{~}c@{~}l}
 \Hom(S(i),E)  &=& \begin{cases}
                     \kk, & \text{if } E_i \xxfrom{\beta} E_{i+1},
                            \text{ and } E_{i-1}=0 \text{ or } E_{i-1} \xxto{\alpha} E_i \\
                     0    & \text{else}    
                    \end{cases}
\\[4ex]
 \Ext^2(S(i),E) &=& \begin{cases}
                     \kk, & \text{if } E_i \xxto{\alpha} E_{i+1},
                            \text{ and } E_{i-1}=0 \text{ or } E_{i-1} \xxfrom{\beta} E_i \\
                     0    & \text{else}    
                    \end{cases}
\end{array} \]
The formulas translate to the first two statements made in the proposition.

Put $s \coloneqq t+1-\dim(E)$, so that $E_s\neq 0$ and $E_{s-1}=0$.
We claim that, under the assumption $\Ext^2(S(i),E)=0$,
\[
\ext^1(S(i),E) = \begin{cases}
                 0, & \text{if } i \leq s-2 \\
                 1, & \text{if } i =    s-1 \\
                 0, & \text{if } i =    s   \\
      \hom(S(i),E), & \text{if } i \geq s+1 \\
                 \end{cases}
\]
To see this, start with the Euler pairing
  $\ext^1(S(i),E) = \hom(S(i),E) - \chi(S(i),E)$,
making use of the assumption.
In the next step, we invoke \hyref{Lemma}{lem:euler-pairing} to write $\chi(S(i),E) = 2e_i-e_{i-1}-e_{i+1}$, where $\udim(E) = (e_1,\ldots,e_t)$. In the first two cases of the statement, $\hom(S(i),E)=0$ from the formula above.

If $i=s$, i.e.\ the worm starts at vertex $i$, then we have $\hom(S(i),E)=1$ because the assumption $\ext^2(S(i),E)=0$ means that the first non-zero map of $E$ is $\beta$. Then $\ext^1(S(i),E)=1-(2-0-1)=0$.
\end{proof}

\begin{corollary} \label{cor:twisting-sequences}
Let $\cE = (E^1,\ldots,E^t) \in \mExc{t}$ and $i\in\{1,\ldots,t-1\}$. Then the following conditions are equivalent:
\begin{enumerate}
\item $\TTT_i\cE \in \mExc{t}$.
\item $\Ext^2(S(i),E^j) = 0$ for all $j=1,\ldots,t$.
\item The worm starting at $i$ does so vertically.
\end{enumerate}
\end{corollary}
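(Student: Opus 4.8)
The plan is to establish $(1)\Leftrightarrow(2)$ homologically, from the defining triangle of the spherical twist, and $(2)\Leftrightarrow(3)$ combinatorially, by feeding \hyref{Lemma}{lem:simple-to-exceptional} into the geometry of worm diagrams.

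For the first equivalence: since $\TTT_i$ is an autoequivalence, $\TTT_i\cE$ is automatically an exceptional sequence, so $\TTT_i\cE\in\mExc{t}$ iff every $\TTT_i(E^j)$ has cohomology in degree $0$ only, i.e.\ is (isomorphic to) a module. Fix an exceptional module $E$ and apply cohomology to the defining triangle (see \hyref{Appendix}{app:spherical})
\[ \Hom^\bullet(S(i),E)\otimes S(i)\too E\too\TTT_i(E)\too[1]. \]
Here $\Hom^\bullet(S(i),E)$ has zero differential, lives in degrees $0,1,2$ (as $\cA_t$ has global dimension $2$), and its terms $\Ext^p(S(i),E)$ are at most one-dimensional by \hyref{Lemma}{lem:simple-to-exceptional}. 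The resulting long exact sequence gives
\[ H^{-1}(\TTT_i E)=\ker\bigl(\eval\colon\Hom(S(i),E)\otimes S(i)\to E\bigr),\qquad H^{1}(\TTT_i E)=\Ext^2(S(i),E)\otimes S(i), \]
while $H^0(\TTT_i E)$ is a module. As $\eval$ is either $0$ or a non-zero morphism out of the simple module $S(i)$, it is injective, so $H^{-1}(\TTT_i E)=0$ always. Hence $\TTT_i(E)$ is a module iff $\Ext^2(S(i),E)=0$, and running this over $E=E^1,\ldots,E^t$ proves $(1)\Leftrightarrow(2)$.

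For the second equivalence, recall from \hyref{Lemma}{lem:simple-to-exceptional} that $\Ext^2(S(i),E^j)\neq0$ exactly when the worm $E^j$ has a horizontal start at $i$ or a left hook at $i$; so $(2)$ asserts that no worm of $\cE$ has either feature at $i$. Exactly one worm $E$ of $\cE$ satisfies $\sigma(E)=i$ (the numbers $\sigma(E^1),\ldots,\sigma(E^t)$ are a permutation of $\{1,\ldots,t\}$), and since $i\leq t-1$ it has length $\geq2$, hence starts at $i$ either horizontally or vertically --- ``vertically'' being exactly $(3)$. So it suffices to show that if $E$ starts vertically at $i$, then no worm of $\cE$ has a left hook at $i$ (for $i=1$ this is automatic, a left hook at $1$ needing a vertex $0$). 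For $i\geq2$ I would invoke the layered structure of a worm diagram: since a worm read from its vertex $t$ backwards lowers $m+n$ by one at each step, for every $p$ the vertex-$p$ points of the $p$ worms with $\sigma\leq p$ fill one anti-diagonal, in a natural left-to-right order. Passing from this anti-diagonal at level $i-1$ to the one at level $i$, each worm present moves one step right ($\alpha$) or up ($\beta$); distinctness of the destinations forces a threshold $v$, with the first $v$ worms (in order) moving up and the rest right, and the one point of level $i$ not reached being the start vertex of $E$, in position $v$. Likewise the passage from level $i$ to level $i+1$ has a threshold $w$. Then $(3)$ --- that $E$ moves up at level $i$ --- is the condition $v<w$; while a worm with a left hook at $i$ must move up from level $i-1$ (arriving at a position $<v$) and then right from level $i$ (hence from a position $\geq w$), which requires a position in $[w,v)$, i.e.\ $w<v$. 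Since $(2)$ also excludes a horizontal start of $E$ at $i$ (the case $v\geq w$), we get $(2)\Leftrightarrow v<w\Leftrightarrow(3)$.

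The homological step is routine given the one-dimensionality from \hyref{Lemma}{lem:simple-to-exceptional}. The main obstacle is the combinatorics of $(2)\Leftrightarrow(3)$: making the anti-diagonal/threshold picture precise, matching it with the graphical terminology of \hyref{Lemma}{lem:simple-to-exceptional}, and checking the extreme levels $i=1$ and $i=t-1$.
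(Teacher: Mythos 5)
Your proposal is correct and follows essentially the same route as the paper: the equivalence $(1)\Leftrightarrow(2)$ via the cohomology long exact sequence of the twist triangle (this is exactly the content of \hyref{Lemma}{lem:module-twist} and \hyref{Corollary}{cor:twisting-exceptional-modules}, which the paper cites), and $(2)\Leftrightarrow(3)$ via \hyref{Lemma}{lem:simple-to-exceptional} together with the observation that a vertical start at $i$ excludes left hooks at $i$ in the other worms. Your anti-diagonal/threshold argument is a correct and complete justification of that last combinatorial claim, which the paper asserts in a single clause without proof.
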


\begin{proof}
  $(1) \iff (2)$ is \hyref{Corollary}{cor:twisting-exceptional-modules}, and $(2) \iff (3)$ is \hyref{Lemma}{lem:simple-to-exceptional}, since a worm starting vertically at $i$ prevents other worms from having left hooks at $i$.
%
\end{proof}

Recall that $f(\cE)$ was defined as the number of vertical edges among all worms.
We now show that a suitable spherical twist of an exceptional sequence of modules reduces the $f$-invariant by one. Moreover, the twist $\TTT_i$ categorifies the left multiplication by the simple transpositions $\tau_i \coloneqq (i,i+1) \in \symm{t}$.

\begin{proposition} \label{prop:twisting-worms}
If $\cE = (E^1,\ldots,E^t)$ is a full exceptional sequence of $\cA_t$-modules with $f(\cE)\geq1$ and the worm starting at $i$ does so vertically, then
\begin{enumerate}
\item $\TTT_i\cE$ is a full exceptional sequence of $\cA_t$-modules,
\item $f(\TTT_i\cE) = f(\cE) - 1$,
\item $\sigma( \TTT_i\cE ) = \tau_i \cdot \sigma(\cE)$.
\end{enumerate}
\end{proposition}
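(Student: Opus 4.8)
The plan is to follow the autoequivalence $\TTT_i=\TTT_{S(i)}$ term by term, $\TTT_i\cE=(\TTT_i(E^1),\ldots,\TTT_i(E^t))$, and determine what happens to each worm $E^j$ using \hyref{Lemma}{lem:simple-to-exceptional} and the defining triangle of the twist. Claim~(1) is immediate: the hypothesis is exactly condition~(3) of \hyref{Corollary}{cor:twisting-sequences}, which is equivalent to $\TTT_i\cE\in\mExc{t}$; in particular every $\TTT_i(E^j)$ is again exceptional, hence a thin indecomposable module. (The assumption $f(\cE)\geq1$ is automatic, since a vertical start is a vertical edge.) The same hypothesis forces $\Ext^2(S(i),E^j)=0$ for every $j$ --- no worm has a horizontal start or a left hook at $i$, as in the proof of \hyref{Corollary}{cor:twisting-sequences} --- so the twist triangle $\Hom^\bullet(S(i),E^j)\otimes S(i)\to E^j\to\TTT_i(E^j)$ has cohomology in degree $0$ only, and each $\TTT_i(E^j)$ is again a module.

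By \hyref{Lemma}{lem:simple-to-exceptional} the worms split into four classes according to $\Hom^\bullet(S(i),E^j)$. If it vanishes, $\TTT_i(E^j)=E^j$. If it is $\kk$ in degree $0$, then $E^j$ is the worm $W_0$ starting vertically at $i$; the triangle reduces to $0\to S(i)\to W_0\to\TTT_i(W_0)\to0$, and $\TTT_i(W_0)$ is the worm starting at $i+1$ whose word is that of $W_0$ with the leading $\beta$ removed. If it is $\kk[-1]$, then $E^j$ is the worm $W_1$ starting at $i+1$; the triangle reduces to a non-split sequence $0\to W_1\to\TTT_i(W_1)\to S(i)\to0$, in which the $\beta$-arrow at $i$ is forced to vanish, so $S(i)$ attaches through the $\alpha$-arrow and $\TTT_i(W_1)$ is the worm starting at $i$ whose word is that of $W_1$ with a leading $\alpha$ prepended. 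Finally, if it is $\kk\oplus\kk[-1]$, then $E^j$ is a worm $W$ with a right hook at $i$, starting at some $s\leq i-1$; here $\chi(S(i),W)=1-1+0=0$, so $[\TTT_i(W)]=[W]$ in $K_0$ and $\TTT_i(W)$ is a worm on the same support $\{s,\ldots,t\}$. The long exact cohomology sequence of the cone of $(\kk\oplus\kk[-1])\otimes S(i)\to W$ gives $0\to W/S(i)\to\TTT_i(W)\to S(i)\to0$; deleting the vertex $i$, at which $W$ has a local sink, breaks $W/S(i)$ into the two sub-worms on $\{s,\ldots,i-1\}$ and $\{i+1,\ldots,t\}$, and indecomposability of $\TTT_i(W)$ (from~(1)) forces the extension to reattach $S(i)$ to both, i.e.\ $\TTT_i(W)$ is $W$ with its right hook at $i$ replaced by a left hook at $i$.

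Granting these descriptions, (3) is formal: $\sigma(\TTT_i\cE)(j)=\sigma(\TTT_i(E^j))$, which equals $\sigma(E^j)$ unless $E^j\in\{W_0,W_1\}$, in which case it is swapped between $i$ and $i+1$; hence $\sigma(\TTT_i\cE)=\tau_i\cdot\sigma(\cE)$. For (2), count $\beta$-edges worm by worm: $W_0$ drops its leading $\beta$ ($-1$), $W_1$ acquires a leading $\alpha$ ($0$), each right-hook worm trades an $\alpha$ for a $\beta$ in place ($0$), and all other worms are unchanged; therefore $f(\TTT_i\cE)=f(\cE)-1$. The one subtle point is the last class: for the first three the twist triangle collapses to a short exact sequence that can be read off directly, whereas a right-hook worm has a two-dimensional $\Hom^\bullet(S(i),-)$. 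The resolution is that $\chi(S(i),W)=0$ fixes the $K_0$-class --- hence the support and length --- of $\TTT_i(W)$, and then the cone's long exact cohomology sequence together with the indecomposability supplied by~(1) pins $\TTT_i(W)$ down as the right-hook/left-hook flip, which is $\beta$-neutral; this is exactly what makes the tally in~(2) come out to $-1$.
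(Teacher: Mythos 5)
Your proof is correct and follows essentially the paper's own argument: both establish (1) via Corollary~\ref{cor:twisting-sequences} and then run the same four-case analysis of $\Hom^\bullet(S(i),E^j)$ from Lemma~\ref{lem:simple-to-exceptional} together with the twist triangle, identifying the unique worm that loses a $\beta$-edge (the vertical start at $i$) and the unique pair of worms whose start vertices get swapped. Your extra justification in the right-hook case --- using $\chi(S(i),W)=0$ plus indecomposability to pin down the hook flip, which the paper merely asserts --- is a welcome detail, and your placement of the resulting left hook at $i$ rather than the paper's $i+1$ is in fact the correct reading of the paper's own hook conventions; either way the discrepancy is immaterial to the edge count.
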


\begin{proof}
(1) is the content of \hyref{Corollary}{cor:twisting-sequences}.

We turn to the computation of $f(\TTT_i\cE)$. Let $E=E^j$ for one $j$.
According to \hyref{Lemma}{lem:simple-to-exceptional}, there are the following four possibilities for $\Hom(S(i),E)$ and $\Ext^1(S(i),E)$, for which \hyref{Lemma}{lem:module-twist} gives the exact sequence containing $\TTT_i E$:
\[ \begin{array}{cccr@{\:}l}
 \text{case} & \Hom(S(i),E) & \Ext^1(S(i),E) & \multicolumn{2}{c}{\text{exact sequence for } \TTT_i E} \\[1ex]
 \text{(O)}  & 0            & 0            &          0 \to & E \to \TTT_i E \to 0 \\
 \text{(H)}  & \kk          & 0            & 0 \to S(i) \to & E \to \TTT_i E \to 0 \\
 \text{(E)}  & 0            & \kk          &          0 \to & E \to \TTT_i E \to S(i) \to 0 \\
 \text{(HE)} & \kk          & \kk          & 0 \to S(i) \to & E \to \TTT_i E \to S(i) \to 0
\end{array} \]
We examine the cases separately. In (O), the exceptional module is unchanged.

In (H), $E$ starts vertically in $i$, and the spherical twist strips off the simple $S(i)$ from $E$. Thus, the number of vertical edges decreases by one.

In (E), the spherical twists prolongs $E$ by the simple $S(i)$ along $\kk \xxto{\alpha} E_i$ because of the morphism $\TTT_iE\to S(i)$. Here, the number of vertical edges is unchanged.

In (HE), the right hook at $i$ is replaced by a left hook at $i+1$. Again, the number of vertical arrows is unchanged.

Let $E^l$ be the worm starting at $i$. By assumption, $E^l$ has a vertical start. The shape of worm diagrams means that no worm has a left hook at $i$. Therefore, by \hyref{Lemma}{lem:simple-to-exceptional} case (H) occurs exactly once, and $f(\TTT_i\cE) = f(\cE)-1$.

Moreover, the proof of that proposition shows (E) occurs precisely if $E$ starts at vertex $i+1$; hence this case appears exactly once, too. We now relate $\sigma \coloneqq \sigma(\cE)$ and $\sigma'\ \coloneqq \sigma(\TTT_i\cE)$. Recall $\sigma(j) = \sigma(E^j) = t+1-\dim(E^j)$ is the starting vertex of the module $E^j$. By the above, there are exactly two positions $j,l$ where $\sigma$ and $\sigma'$ differ because cases (O) and (HE) do not change worm lengths, and cases (H) and (E) occur once each. Then $\sigma'(j) = i+1 = \sigma(j)+1$ for the unique $E^j$ starting in $i$; this is case (H). In turn for case (E), the module $E^l$ with starting vertex $i+1$ becomes prolonged by $S(i)$, i.e.\ $\sigma'(l) = i = \sigma(l)-1$.
Hence $\tau_i\cdot \sigma = (i,i+1)\cdot \sigma(\cE) = \sigma' = \sigma(\TTT_i\cE)$. So we have also proved (3).
\end{proof}

\begin{proof}[Proof of \hyref{Theorem}{thm:twisting-worm-diagrams}]
We first show the following equivalence, for any $1\leq i<t$:
\[ f(\tau_i\cdot\sigma(\cE)) = f(\cE)-1  \iff \text{the worm starting at $i$ does so vertically}. \]
Write $\cE = (E^1,\ldots,E^t)$ and let $E^k,E^l$ be the worms starting at $i$ and $i+1$, respectively, i.e.\ $\sigma(E^k)=i$ and $\sigma(E^l)=i+1$. By definition, $(F^1,\ldots,F^t) \coloneqq \tau_i\cdot\sigma(\cE)$ is the worm diagram obtained from $\cE$ by $\sigma(F^j)=i+1$ and $\sigma(F^l)=i$; all other worm lengths are unchanged. 
We examine what this means for worm shapes. 
First, if $\sigma(E^j)>i+1$, then $F^j=E^j$ is unchanged. 
Second, the worm $E^k$ starting at $i$ gets shorter by one edge, and the worm $E^l$ starting at $i+1$ gets longer by one edge. For all other worms, left hooks at $i+1$ are turned into right hooks, and vice versa. In particular, the number of vertical edges only depends on $E^k$ and $E^l$: if $E^k$ starts vertically, then $F^k$ loses that edge and $F^l$ gains a horizontally starting edge; hence $f(\tau_i\cdot\sigma(\cE)) = f(\cE) - 1$. On the other hand, if $E^k$ starts horizontally, then $F^l$ gains a vertically starting edge, so that $f(\tau_i\cdot\sigma(\cE)) = f(\cE) + 1$. This proves the claim.

By \hyref{Proposition}{prop:twisting-worms}, any vertically starting worm $E^i$ gives rise to the twisted exceptional sequence $\TTT_i\cE$ of modules with one less vertical edge. In this way we can proceed to obtain a sequence of $f=f(\cE)$ spherical twists $\TTT_{i(f)}\cdots\TTT_{i(1)}\cE = \Delta$ ending in the standard sequence. Then
\begin{align*}
 \omega &= \sigma(\Delta) = \sigma(\TTT_{i(f)}\cdots\TTT_{i(1)}\cE)
         = \tau_{i(f)}\cdots\tau_{i(1)}\cdot\sigma(\cE) ,
\end{align*}
using $\sigma(\TTT_i\cE) = \tau_i\cdot\sigma(\cE)$ from \hyref{Proposition}{prop:twisting-worms}.
Hence $\tau_{i(f)}\cdots\tau_{i(1)} = \omega\sigma(\cE)^{-1}$.

The theorem follows from this computation and the above equivalence.
\end{proof}

\begin{example} \label{ex:t=3-twists}
  We illustrate the proposition with our running example $t=3$.
  Note how this hexagon is different from the one of \hyref{Example}{ex:t=3-mutations}.
  Below each worm diagram, we show its start permutation. This example categorifies \hyref{Example}{ex:t=3-worm-diagrams}, replacing left multiplications with $\tau_i$ by spherical twists $\TTT_i$.
\begin{center}
\begin{tikzpicture}
  \newcommand*{\hh}{1.7} 
    \thorm{ 0}{\hh}{123}{{0,0,0,1,0,2, 1,0,1,1, 2,0}}
    \thorm{-3}{\hh}{213}{{0,0,1,0,1,1, 0,1,0,2, 2,0}}
    \thorm{ 3}{\hh}{132}{{0,0,0,1,0,2, 1,0,2,0, 1,1}}
    \thorm{-3}{0}{312}{{0,0,0,1,1,1, 1,0,2,0, 0,2}}
    \thorm{ 3}{0}{231}{{0,0,1,0,2,0, 0,1,0,2, 1,1}}
    \thorm{ 0}{0}{321}{{0,0,1,0,2,0, 0,1,1,1, 0,2}}
    \tikzset{T1/.style={->,thick,red}}    
    \tikzset{T2/.style={->,thick,blue}}
    \tikzset{shortb <>/.style={ shorten >=#1, shorten <=#1 } }     
    \tikzset{shorts <>/.style={ shorten >=#1*0.5, shorten <=#1 } } 
    \tikzset{shorte <>/.style={ shorten >=#1, shorten <=#1*0.5 } } 
    \newcommand*{\tikzl}{0.6cm}
    \newcommand*{\tikzh}{\tikzl*0.5}
    \draw[T1,shorts <>=\tikzl] (123) -- (213) node[pos=0.55,above] {$\TTT_1$};
    \draw[T2,shorte <>=\tikzl] (123) -- (132) node[pos=0.45,above] {$\TTT_2$};
    \draw[T2,shortb <>=\tikzl] (213) .. controls +(left:3cm+\tikzl) and +(left:3cm+\tikzl) .. (312) node[midway,left] {$\TTT_2~$};
    \draw[T1,shortb <>=\tikzh] (132) .. controls +(right:3cm+\tikzh) and +(right:3cm+\tikzh) .. (231) node[midway,right] {$\TTT_1$};
    \draw[T1,shorte <>=\tikzl] (312) -- (321) node[pos=0.45,below] {$\TTT_1$};
    \draw[T2,shorts <>=\tikzl] (231) -- (321) node[pos=0.55,below] {$\TTT_2$};
\end{tikzpicture}
\end{center}
We observe the braid relation $\TTT_1\TTT_2\TTT_1(\nabla) \cong \TTT_2\TTT_1\TTT_2(\nabla)$. In fact, there is a functor isomorphism $\TTT_1\TTT_2\TTT_1 \cong \TTT_2\TTT_1\TTT_2$ on $\Db(\cA_t)$; see \hyref{Subsection}{sub:braids}.
\end{example}

\appendix

\section{Spherical twist functors} \label{app:spherical}

\noindent
Let $\Lambda$ be a finite-dimensional algebra of finite global dimension, and $\cD = \Db(\Lambda)$ the bounded derived category of left $\Lambda$-modules with Serre (Nakayama) functor $\SSS$.

An object $S$ of $\cD$ is \emph{$e$-spherical} if $\SSS(S)\cong S[e]$ and $\Hom^\bullet(S,S) = \kk \oplus \kk[-e]$. Consider the complex of bimodules $S^* \otimes_\kk S$; it corresponds to the endofunctor $\Hom^\bullet(S,\blank)\otimes S$ of $\cD$. There is the canonical evaluation morphism $\eta\colon S^* \otimes_\kk S \to \Lambda$, where $\Lambda$ is a bimodule in the natural way, corresponding to the identity functor. Denoting the functor associated to $\cone(\eta)$ by $\TTT_S$, we obtain a triangle of functors
\[ \Hom^\bullet(S,\blank) \otimes S \too \id \too \TTT_S \too \]
and $\TTT_S$ is called the \emph{spherical twist functor} along $S$.

By construction, we have $\TTT_S(S) \cong S[1-e]$, and $\TTT_S(M) \cong M$ for all $M\in S\orth = \{M \in \cD \mid \Hom^\bullet(S,M)=0 \}$. These two properties remind of reflections and can in fact be used to prove that $\TTT_S \colon \cD \isom \cD$ is an autoequivalence; see \cite[\S8]{Huybrechts}.
At one place, we will need the inverse functor, which is given by
\[ \TTT_S^{\,-1} \too \id \too \Hom^\bullet(\blank,S)^* \otimes S \too \, . \]

\subsection{Special case: modules}
We describe a situation when the spherical twist of a module is again a module.

\begin{lemma} \label{lem:module-twist}
Let $S$ be a simple module, and $M$ any module. Then 
 $H^1(\TTT_S(M)) = \Ext^2(S,M)\otimes S$.
Moreover, if $\Ext^{\geq2}(S,M)=0$, then $\TTT_S(M)$ is a module, and occurs in the exact sequence
\[ 0 \too \Hom(S,M)\otimes S \too M \too \TTT_S(M) \too \Ext^1(S,M)\otimes S \too 0 \, . \]
\end{lemma}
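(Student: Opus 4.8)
The starting point is the defining triangle of the spherical twist functor, evaluated at the module $M$:
\[ \Hom^\bullet(S,M)\otimes S \too M \too \TTT_S(M) \too \Hom^\bullet(S,M)\otimes S[1] \, . \]
Here $\Hom^\bullet(S,M) = \bigoplus_p \Ext^p(S,M)[-p]$ is a complex of vector spaces with zero differential, so the left-hand term is the complex $\bigoplus_p \Ext^p(S,M)\otimes S[-p]$. Since $\Lambda = \cA_t$ has global dimension $2$, only $p=0,1,2$ contribute. I would take the long exact cohomology sequence of this triangle, using that $M$ and $S$ sit in cohomological degree $0$, so that $\Hom^\bullet(S,M)\otimes S$ has cohomology $\Ext^p(S,M)\otimes S$ in degree $p$.

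The first step is to read off $H^i(\TTT_S(M))$ for each $i$ from the long exact sequence. In degrees $i \leq -1$ the outer terms vanish, so $H^{i}(\TTT_S(M)) = 0$ there; similarly $H^i = 0$ for $i \geq 3$ since the source complex is concentrated in degrees $0,1,2$ and $M$ in degree $0$. The interesting part is
\[ 0 \to \Hom(S,M)\otimes S \to M \to H^0(\TTT_S M) \to \Ext^1(S,M)\otimes S \to 0 \to H^1(\TTT_S M) \to \Ext^2(S,M)\otimes S \to 0 \]
together with $H^2(\TTT_S M) = 0$; the vanishing $H^2 = 0$ uses that $H^2$ of $M$ (in degree $0$) maps in from $\Ext^2(S,M)\otimes S$ which is the last term, and the degree-$3$ term vanishes. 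From the four-term exactness near degree $0$ I get the claimed sequence $0 \to \Hom(S,M)\otimes S \to M \to H^0(\TTT_S M) \to \Ext^1(S,M)\otimes S \to 0$ once I know $H^1(\TTT_S M)$ receives nothing from $M$ — which is automatic since $M$ lives in degree $0$ — hence the connecting map $H^0(\TTT_S M) \to \Ext^1(S,M)\otimes S[0]$-part is onto. This also yields $H^1(\TTT_S(M)) \cong \Ext^2(S,M)\otimes S$, the first assertion.

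The one genuine subtlety — and the main obstacle — is the identification of the left-hand map $\Hom(S,M)\otimes S \to M$ in the long exact sequence with the canonical evaluation morphism, and more importantly, checking that in degree $0$ it is \emph{injective}. Injectivity is what makes the four-term sequence exact on the left (no $H^{-1}$ term surviving). For $S$ a simple module this is clear: the evaluation $\Hom(S,M)\otimes S \to M$ picks out the $S$-isotypic socle part and is a monomorphism of modules. I would spell this out by noting that a nonzero map $\Hom(S,M)\otimes S \to M$ restricted to each copy of the simple $S$ is either zero or injective, and the copies are linearly independent inside $\Hom(S,M)$, so the total map is injective. Granting this, the sequence $0 \to \Hom(S,M)\otimes S \to M \to H^0(\TTT_S M) \to \Ext^1(S,M)\otimes S \to 0$ is exact, and under the hypothesis $\Ext^{\geq 2}(S,M)=0$ we get $H^1(\TTT_S M)=0$ as well, so $\TTT_S(M)$ is concentrated in degree $0$, i.e.\ a module, and fits into the displayed four-term exact sequence. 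That finishes the proof.
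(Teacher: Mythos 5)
Your argument is correct and follows essentially the same route as the paper: take the long exact cohomology sequence of the defining triangle $\Hom^\bullet(S,M)\otimes S \to M \to \TTT_S(M)$, read off $H^1(\TTT_S(M)) \cong \Ext^2(S,M)\otimes S$ from the vanishing of $H^1(M)$ and $H^2(M)$, and use the injectivity of the canonical evaluation $\Hom(S,M)\otimes S \to M$ (which holds because $S$ is simple) to kill $H^{-1}(\TTT_S(M))$ and obtain the four-term exact sequence. You correctly identified the evaluation-injectivity step as the one point needing an argument, which is exactly the point the paper singles out as well.
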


\begin{proof}
The triangle $\Hom^\bullet(S,M)\otimes S\to M \to \TTT_S(M) \to$ defines $\TTT_S(M) \in \Db(\Lambda)$. Its long exact cohomology sequence gives $H^1(\TTT_S(M)) = \Ext^2(S,M)\otimes S$ right away.

If $\Ext^{\geq2}(S,M)=0$, then $\Hom^\bullet(S,M) = \Hom(S,M) \oplus \Ext^1(S,M)[-1]$, and long exact cohomology sequence of the triangle is
\[ 0 \to H^{-1}(\TTT_S(M)) \to \Hom(S,M)\otimes S \xxto{\varphi} M \to H^0(\TTT_S(M))
     \to \Ext^1(S,M)\otimes S \to 0 \, . \]
Put $h \coloneqq \hom(S,M)$. The map $\varphi\colon S^{\oplus h} \to M$ is injective because it is the canonical evaluation and $S$ is simple. Thus $H^{-1}(\TTT_S(M))=0$ and $\TTT_S(M)$ is concentrated in degree 0, i.e.\ a module.
\end{proof}

\begin{corollary} \label{cor:twisting-exceptional-modules}
  Let $E$ be an exceptional $\cA_t$-module, and let $i\in\{1,\ldots,t-1\}$.
  If $\Ext^2(S(i),E)=0$, then $\TTT_{S(i)}(E)$ is an exceptional $\cA_t$-module.
\end{corollary}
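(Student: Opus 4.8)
The plan is to show that $\TTT_{S(i)}(E)$ is exceptional by verifying it is (a) a module, (b) indecomposable and thin, and (c) of rank $1$, so that \hyref{Theorem}{thm:module-classification}(1) applies. Part (a) is immediate: since $\Ext^2(S(i),E)=0$ by hypothesis and $S(i)$ has global dimension $\le 2$, we have $\Ext^{\geq 2}(S(i),E)=0$, so \hyref{Lemma}{lem:module-twist} tells us $\TTT_{S(i)}(E)$ is a module sitting in the four-term exact sequence
\[ 0 \too \Hom(S(i),E)\otimes S(i) \too E \too \TTT_{S(i)}(E) \too \Ext^1(S(i),E)\otimes S(i) \too 0 \, . \]
By \hyref{Lemma}{lem:simple-to-exceptional}, $\hom(S(i),E)\le 1$ and $\ext^1(S(i),E)\le 1$, so this is one of the four short cases (O), (H), (E), (HE) displayed in the proof of \hyref{Proposition}{prop:twisting-worms}.

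Next I would run through these four cases explicitly at the level of worm pictures to see that $\TTT_{S(i)}(E)$ stays indecomposable, thin, and of rank $1$. In case (O) nothing changes. In case (H), $E$ starts vertically at $i$ with $E_{i-1}=0$ and $E_i \xxfrom{\beta} E_{i+1}$, and the twist is $0\to S(i)\to E\to \TTT_{S(i)}(E)\to 0$, so $\TTT_{S(i)}(E)$ is $E$ with its bottom vertical edge removed — still a thin worm ending at $t$, hence rank $1$. In case (E), $0\to E\to \TTT_{S(i)}(E)\to S(i)\to 0$, and since $E$ starts at $i+1$ with $E_{i+1}$ reached by $\alpha$ (the condition $\Ext^2(S(i),E)=0$ in the "$i\ge s+1$" case reads off from \hyref{Lemma}{lem:simple-to-exceptional}), the extension prolongs the worm by one horizontal edge at the bottom; the essential uniqueness of the nonzero extension ($\ext^1(S(i),E)=1$) pins down the resulting module, again a thin worm of rank $1$. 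In case (HE), $E$ has a right hook at $i$, and one checks the combined four-term sequence replaces that right hook by a left hook at $i+1$, leaving a thin worm of the same length, rank $1$. In all cases the shape is still a single connected lattice path ending at $t$, so indecomposable and thin.

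Alternatively — and this is the cleaner route, which I would prefer to present — one avoids the case analysis entirely: $\TTT_{S(i)}$ is an autoequivalence of $\Db(\cA_t)$, hence preserves $\Hom^\bullet$, so $\Hom^\bullet(\TTT_{S(i)}(E),\TTT_{S(i)}(E)) = \Hom^\bullet(E,E) = \kk$, i.e.\ $\TTT_{S(i)}(E)$ is exceptional as an object of the derived category. Combined with part (a), $\TTT_{S(i)}(E)$ is an exceptional object that happens to be a module, which is exactly the assertion. The main obstacle is therefore purely the module-ness claim (a), and that is already handled by \hyref{Lemma}{lem:module-twist} together with $\gldim \cA_t = 2$; the "exceptional" adjective then comes for free from autoequivalence-invariance of $\Hom^\bullet$. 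I would write the short argument as the proof and relegate the worm-shape bookkeeping to the discussion preceding \hyref{Proposition}{prop:twisting-worms}, where it is needed anyway for the $f$-invariant count.
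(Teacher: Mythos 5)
Your preferred short argument --- module-ness from \hyref{Lemma}{lem:module-twist} together with the fact that $\cA_t$ has global dimension $2$, and exceptionality coming for free because autoequivalences preserve $\Hom^\bullet$ --- is exactly the proof the paper gives. The worm-shape case analysis in your first paragraphs is therefore redundant for this corollary (it is, as you note, only needed later for the $f$-invariant count in \hyref{Proposition}{prop:twisting-worms}).
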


\begin{proof}
The algebra $\cA_t$ has global dimension 2. Thus the only Ext vanishing of the lemma to be checked is in degree 2.
Hence $\TTT_{S(i)}(E)$ is a module by the lemma.

Finally note that images of exceptional objects under any fully faithful functor (e.g.\ an autoequivalence such as the spherical twist) are again exceptional. 
\end{proof}

\subsection{Braid relations} \label{sub:braids}
We briefly return to the general setting: if $S,S'\in\cD$ are $e$-spherical objects such that $\Hom^\bullet(S,S') = \kk[-n]$ for some $n$, then the twist functors $\TTT_S$ and $\TTT_{S'}$ satisfy the braid relations:
  $\TTT_S \TTT_{S'} \TTT_S \cong \TTT_{S'} \TTT_S \TTT_{S'}$. 
Clearly, this can be iterated to chains of $e$-spherical objects $S_1,\ldots,S_n$ such that $\dim\Hom^\bullet(S_i,S_j)=1$ if $|i-j|=1$ and zero else. Such a chain induces an action of the $n$-stranded braid group $\braid{n}$ on $\cD$, i.e.\ a group homomorphism
  $\braid{n} \to \Aut \cD$.

We apply this fact to the situation of this text: the $t-1$ simple modules $S(1),\ldots,S(t-1)$ are $2$-spherical objects of $\Db(\cA_t)$ and the only non-vanishing extensions among different simple modules are $\Ext^1(S(i),S(i+1))=\kk$. 

\begin{corollary} \label{cor:braid-relations}
  There is a braid group action $\braid{t} \to \Aut(\Db(\cA_t))$, mapping the braid intertwining strands $i$ and $i+1$ to $\TTT_{S(i)}$.
\end{corollary}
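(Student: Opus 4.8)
The plan is to reduce \hyref{Corollary}{cor:braid-relations} to the general braid-relation fact for spherical twists recalled at the start of \hyref{Subsection}{sub:braids}, so the work is purely bookkeeping about which Hom-complexes between the simple modules $S(1),\ldots,S(t-1)$ vanish. First I would recall from \hyref{Proposition}{prop:CY-objects} that each $S(i)$ with $1\leq i\leq t-1$ is $2$-spherical in $\Db(\cA_t)$; this is exactly the input needed to form the twist functors $\TTT_{S(i)}$. Next I would compute $\Hom^\bullet(S(i),S(j))$ for $i\neq j$ using the complex $\cC(S(i),S(j))$ of \hyref{Proposition}{prop:ext-spaces}, or equivalently the resolutions of \hyref{Lemma}{lem:resolutions} together with \hyref{Lemma}{lem:euler-pairing}: one finds $\Hom(S(i),S(j))=0$ for $i\neq j$ always, $\Ext^2(S(i),S(j))=0$ for $i\neq j$ (only $S(1),\dots,S(t-1)$ being relevant, all of rank $0$, so \hyref{Proposition}{prop:inequalities}(1) forces this), and $\Ext^1(S(i),S(j))=\kk$ precisely when $|i-j|=1$ and $0$ otherwise. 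Concretely, $\chi(S(i),S(j)) = 2\delta_{ij}-\delta_{i,j\pm1}$ by \hyref{Lemma}{lem:euler-pairing}, which pins down $\ext^1$ once $\hom$ and $\ext^2$ are known.

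The upshot is that $\dim\Hom^\bullet(S(i),S(j))$ is $1$ (concentrated in degree $1$, i.e. $\Hom^\bullet(S(i),S(j))=\kk[-1]$) when $|i-j|=1$, and $0$ when $|i-j|\geq 2$. This is exactly the numerical condition defining an $A_{t-1}$-chain of $2$-spherical objects in the sense recalled above. I would then invoke the standard result (cited there to \cite{Huybrechts}, \cite[\S8]{Huybrechts}) that such a chain yields a group homomorphism $\braid{t}\to\Aut(\Db(\cA_t))$ sending the standard generator $\tau_i$ (the braid intertwining strands $i$ and $i+1$) to $\TTT_{S(i)}$: adjacent twists satisfy $\TTT_{S(i)}\TTT_{S(i+1)}\TTT_{S(i)}\cong\TTT_{S(i+1)}\TTT_{S(i)}\TTT_{S(i+1)}$ because $\Hom^\bullet(S(i),S(i+1))=\kk[-1]$, and non-adjacent twists commute because $\Hom^\bullet(S(i),S(j))=0$ for $|i-j|\geq 2$. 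Since these are the defining relations of $\braid{t}$, the assignment extends to the claimed homomorphism.

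The only genuinely substantive step is the Ext-computation among the simples, and even that is routine given the machinery already in place: \hyref{Lemma}{lem:resolutions} gives the projective resolutions, \hyref{Proposition}{prop:ext-spaces} identifies $\Ext^\bullet$ with the cohomology of $\cC(S(i),S(j))$, and \hyref{Proposition}{prop:inequalities}(1) kills the degree-$2$ terms since all the relevant simples have rank $0$. I expect the main (minor) obstacle to be making sure the braid relation for a single adjacent pair is correctly imported in the form needed — i.e. that $\Hom^\bullet(S(i),S(i+1))=\kk$ in a single degree is the precise hypothesis of the cited statement, rather than some slightly different normalization — but this is a matter of citation hygiene, not of mathematical content. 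No new ideas beyond what is already in the excerpt and \cite{Huybrechts} are required.
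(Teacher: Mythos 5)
Your proposal is correct and follows essentially the same route as the paper: both verify that $S(1),\ldots,S(t-1)$ form an $A_{t-1}$-chain of $2$-spherical objects (sphericity from \hyref{Proposition}{prop:CY-objects}, and $\Hom^\bullet(S(i),S(j))$ one-dimensional for $|i-j|=1$ and zero for $|i-j|\geq 2$ via the Euler pairing and \hyref{Proposition}{prop:inequalities}) and then invoke the general braid-relation fact for chains of spherical twists. The only cosmetic difference is that you spell out the Ext-computation in more detail than the paper, which simply records the outcome in \hyref{Subsection}{sub:braids}.
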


In fact, by \cite[Theorem~2.18]{Seidel-Thomas}, this action is actually effective.

\section{Dictionary algebra--geometry} \label{app:geometry}

\noindent
The algebra $\cA_t$ occurs in a geometric guise in our previous article \cite{Hille-Ploog1}. Let $X$ be a smooth, projective surface such that all line bundles are exceptional (this holds, for example, if $X$ is a rational, e.g.\ toric, surface), and let $C_1,\ldots,C_{t-1}$ be an $A_{t-1}$-chain of $(-2)$-curves in $X$, i.e.\ $C_i\cong\IP^1$ and $C_i^2=-2$ for all $i$. Then
\[ \cE = \big( \cO_X(-C_1-\cdots-C_{t-1}), \ldots, \cO_X(-C_2-C_1), \cO_X(-C_1), \cO_X \big) \]
is an exceptional sequence in $\Db(\Coh(X))$, and we also denote by $\cE$ the triangulated subcategory it generates. Define an additive category $\Coh_\cE(X) \coloneqq \cE \cap \Coh(X)$. Denote by $T$ the iterated universal extension of the exceptional sequence. Two facts are crucial for connecting these notions to this article:

First, $T\in\cE$ is a tilting bundle such that $\End(T) = \cA_t$, i.e.\ the algebra we study in this article occurs in a geometric tilting situation.
Second, the derived tilting equivalence $\RHom(T,\blank)\colon \cE \isom \Db(\cA_t)$ restricts to an equivalence of abelian categories $\Hom(T,\blank) \colon \Coh_\cE(X) \isom \lmod{\cA_t}$. This exactness property of the tilting equivalence actually holds more generally for self-intersection numbers $\leq -2$.

Note that the tilting functors $\RHom(T,\blank)$ and $\Hom(T,\blank)$ produce (complexes of) right modules whereas in this article we always consider left modules. However, this is not a concern because $\cA_t \cong \cA_t\op$ is symmetric; see \hyref{Remark}{rem:symmetric}.

We list some geometric counterparts to algebraic notions:

\bigskip\noindent
{\small
\begin{tabular}{@{} p{0.52\textwidth} @{\hspace{0.05\textwidth}} p{0.43\textwidth}}
  sheaf $F\in\Coh(X)\cap\cE$ & representation $M \in \lmod{\cA_t}$
\lfdict
  rank of the sheaf $F$ & $\rk M = \dim M_t = \hom(P(t),M)$
\lfdict
  locally free sheaf (vector bundle) $F$ &
  $M$ with all $M_{i-1}\to M_i$ injective
\lfdict
  torsion subsheaf of $F$ &
  maximal $M'\subseteq M$ with $\alpha^i M' = 0 \quad \forall i$
\lfdict
  exceptional sequence of line bundles \newline $(\cO_X(-C_1-\cdots-C_{t-1}), \ldots, \cO_X(-C_1), \cO_X)$
& exceptional sequence of $\Delta$-modules \newline $(\Delta(1),\Delta(2),\ldots,\Delta(t))$
\lfdict
  simple sheaves & simple modules
\\
  $\cO_{C_{t-1}}(-1),\ldots,\cO_{C_2}(-1),\cO_{C_1}$ torsion sheaves & $S(1),\ldots,S(t-2),S(t-1)$ 
\\
  $\cO_X(-C_1-\cdots-C_{t-1})$ & $S(t)$
\lfdict
  projective objects & projective modules
\\
  $\cO_X, \smextension{\cO_X(-C_1)}{\cO_X}, \ldots$ iterated extensions & $P(1),P(2),\ldots$
\end{tabular}
}

\end{document}